\documentclass[11pt]{article}
\usepackage{amsmath}
\usepackage{dsfont}
\usepackage{mathrsfs}
\usepackage{amsmath,amssymb}
\usepackage{amsfonts}
\usepackage{hyperref}
\usepackage{amsthm}
\usepackage{graphicx}
\usepackage{subfigure}
\usepackage{xcolor}
\usepackage{cite}
\usepackage{epic,eepic,epsf,epsfig}
\usepackage{floatflt}
\hfuzz=\maxdimen
\theoremstyle{plain}
\newtheorem{lemma}{Lemma}[section]
\newtheorem{definition}[lemma]{Definition}
\newtheorem{proposition}[lemma]{Proposition}
\newtheorem{theorem}[lemma]{Theorem}
\newtheorem{corollary}[lemma]{Corollary}

\newtheorem{question}{Question}

\numberwithin{equation}{section}

\DeclareFixedFont{\Acknowledgment}{OT1}{cmr}{bx}{n}{14pt}
\allowdisplaybreaks
\begin{document}

\title{\bf The character of ideal circle patterns}

\author{Chang Li, Aijin Lin*, Liangming Shen}
\date{}
\maketitle

\begin{abstract}
Let $S$ be an oriented closed surface with a cellular decomposition $\mathcal{D}$ and a weight $\Phi\in(0, \pi)$. It is crucial to determine when $S$ supports an ideal $\mathcal{D}$-type circle pattern $\mathcal{P}$ with the exterior intersection angles given by $\Phi$. Rivin, Bobenko-Springborn and Ge-Hua-Zhou provided perfect solutions and gave wonderful criteria for the existence and uniqueness of ideal circle patterns. However, all criteria established by Rivin, Bobenko-Springborn and Ge-Hua-Zhou are extremely difficult to verify for the given cellular decomposition $\mathcal{D}$ and the weight $\Phi$. In this paper, we introduce the character $\mathcal{L}(\mathcal{D},\Phi)$ depends only on the data of the weighted cellular decomposition $(\mathcal D, \Phi)$ on $S$, and give some quite simple criteria for the existence of ideal circle patterns realizing $(\mathcal{D},\Phi)$. It seems that our character-type criteria are the first conditions totally different from criteria of Rivin, Bobenko-Springborn and Ge-Hua-Zhou, and provide more easily verifiable criteria. Our new character-type theorems may be of some independent interest. As an application, we give a new descriptions of the curvature image set $\mathbf{K}(\mathbb{R}^N_{>0})$. To approach our results, we shall use the combinatorial Ricci flows with ideal circle patterns introduced by Ge-Hua-Zhou as a fundamental tool. The main difficulty in the proof of our results is to establish the compactness of the solution to the flows. To circumvent the difficulty, we borrow the techniques developed by Ge and his collaborators.


  \par\quad
    \newline \textbf{Keywords}: Character; Ideal circle patterns; Combinatorial Ricci flows
    \newline \textbf{Mathematics Subject Classification} (2020): 51M10, 52C15, 05E45
\end{abstract}
\vspace{12pt}
\let\thefootnote\relax \footnotetext {*Corresponding author}

\section{Introduction}
\hspace{14pt}
This is a continuation of our study on criteria for the existence of Thurston's circle patterns in \cite{GL, LLS}. We give new criteria for the existence of Thurston's circle patterns in the hyperbolic and Euclidean background geometres in \cite{GL, LLS}. In this paper we focus on criteria for the existence of ideal circle patterns.\par
\subsection{Background}
\hspace{14pt}
To study the geometry and topology of 3-manifolds, Thurston \cite{T} states the
circle pattern theorem regarding the existence and uniqueness of circle patterns on higher
genus surfaces with prescribed combinatorial type and non-obtuse exterior intersection
angles. Thurston also noted that the sphere version of his theorem followed from
Andreev's theorem \cite{Andreev1, Andreev2}. For the special case of vanishing exterior exterior intersection
angles, the consequence was due to Koebe \cite{Koebe}.\par
Let $S$ be an oriented closed surface with a cellular decomposition $\mathcal{D}$. Assume that
$\mu$ is a Riemannian metric with constant curvature on $S$. A circle pattern $\mathcal{P}$ on $(S, \mu)$ is a
collection of oriented circles. And $\mathcal{P}$ is called $\mathcal{D}$-type, if there exists a geodesic cellular
decomposition $\mathcal{D}(\mu)$ of $(S, \mu)$ with the following properties: (i) $\mathcal{D}(\mu)$ is isotopic to $\mathcal{D}$; (ii)
the vertices of $\mathcal{D}(\mu)$ coincide with the centers of the disks in $\mathcal{P}$. Let $V, E, F$ denote the sets
of vertices, edges and 2-cells of $\mathcal{D}$. In this paper, we mainly focus on these $D$-type circle
patterns $\mathcal{P}=\{C_{v}: v\in V\}$ such that $C_{v}$ and $C_{u}$ intersect with each other, whenever there
exists an edge $e=[v, u]\in E$. Under this condition we have the exterior intersection angle
$\Phi(e)\in[0, \pi)$ for every $e\in E$. For each $v\in V$, let $\mathbb{D}_{v}$ (resp. $D_{v}$) denote the open (resp.
closed) disk bounded by $C_{v}$. We call each connected component of the set $S\setminus \bigcup_{v\in V}\mathbb{D}_{v}$ an interstice. One refers to Stephenson's monograph \cite{Stephenson} for more backgrounds on circle patterns.\par
The connection between circle patterns and hyperbolic polyhedra is observed by
Thurston \cite{T}. Given a convex hyperbolic polyhedron $P$ in the hyperbolic
3-space $\mathbb{B}^{3}$, the boundaries of the oriented hyperbolic planes containing its faces is a circle
pattern on the sphere $\mathbb{B}^{3}$ with the dual combinatorial type of $P$, and the dihedral
angles between the faces are equal to the exterior intersection angles between the circles.
The readers can refer to the works of
Marden-Rodin \cite{Mar-Rod}, Colin de Verdi\`{e}re \cite{Colin}, Bowditch \cite{Bodw}, Rivin-Hodgson \cite{R-H}, Rivin \cite{Rivin1}\cite{Rivin2}, Bao-Bonahon \cite{Bao}, Bobenko-Springborn \cite{BS}, Leibon \cite{Leib}, Rousset \cite{Rou}, Schlenker \cite{Schl} and others for relevant results on circle patterns and
hyperbolic polyhedra. Thurston also posed a conjecture regarding the convergence of infinitesimal hexagonal tangent circle patterns to conformal mappings \cite{T2}, which was proved by Rodin-Sullivan \cite{RS}. From then on circle patterns have played significant roles in the study of low dimensional geometry and topology, complex analysis \cite{He1}-\cite{He3}, and various problems in combinatorics \cite{Schr1, Liu, Zhou1, Zhou2}, discrete and computational geometry \cite{BowersStephenson-mams, Stephenson, Dai}, minimal surfaces \cite{Bob1}, and many others.\par
A convex hyperbolic polyhedron in $\mathbb{B}^{3}$ is called \emph{ideal} if all its vertices are on the sphere
at infinity. Namely, it is the convex hull of a finite set of points (not all contained on a
plane) on the sphere at infinity. The following is a parallel concept for circle patterns.
\begin{definition} A $\mathcal{D}$-type circle pattern $P$ on $(S, \mu)$ is called \emph{ideal} if it satisfies the following
properties: (i) there exists a one-to-one correspondence between the interstices of $\mathcal{P}$ and
the 2-cells of $\mathcal{D}$; (ii) every interstice of $\mathcal{P}$ consists of a point.
\end{definition}

Let $\Phi$ be the exterior intersection angle function of an ideal circle pattern. Then we
have $\Phi(e)\in(0, \pi)$ for every $e\in E$. Meanwhile, the following condition is also necessary:
($B_1$) For any distinct edges $e_{1}, \cdots, e_{m}$ forming the boundary of a 2-cell of $\mathcal{D}$,
\begin{equation}\label{B1}
\sum_{i=1}^{m}\Phi(e_{i})=(m-2)\pi.
\end{equation}
Conversely, given a function $\Phi: E\rightarrow(0, \pi)$ satisfying ($B_1$), it is natural to consider
the following problem.
\begin{question}\label{question-existence}
 \item[]  Does there exist an ideal $\mathcal{D}$-type circle pattern whose exterior
intersection function is $\Phi$? And if it does, to what extent is the circle pattern unique?
\end{question}
Rivin \cite{Rivin1}, Bobenko-Springborn \cite{BS} and Ge-Hua-Zhou \cite{GHZ-2} provided perfect solutions to Question \ref{question-existence}, and gave wonderful criteria for the existence and uniqueness of ideal circle patterns. Especially, Ge-Hua-Zhou \cite{GHZ-2} pioneered the Chow-Luo's combinatorial Ricci flow method to study ideal circle patterns and gave a more systematic and comprehensive answer to Question \ref{question-existence}. Moreover, Ge-Hua-Zhou \cite{GHZ-2}  provided an algorithm (converging exponentially fast) to find the desired
ideal circle patterns.\par
In the case of $g=0$, the answer to Question \ref{question-existence} follows from Rivin's theorem on ideal hyperbolic
polyhedra \cite{Rivin1}. Given an abstract polyhedron $P$ with the dual polyhedron $P^{*}$, we
call a set $\{e_{1}, \cdots, e_{s}\}\subset P$ of edges a prismatic $s$-circuit, if their dual edges $\{e^{*}_{1}, \cdots, e^{*}_{s}\}$
form a simple closed path which does not bound a face of $P^{*}$. Rivin's theorem \cite{Rivin1} is then stated as follows.
\begin{theorem}\label{Rivin}(Rivin). Given an abstract polyhedron $P$, assume that $\Phi(e)\in(0, \pi)$ is a function
defined on its edge set. There exists a convex hyperbolic ideal polyhedron $Q$ combinatorially
equivalent to $P$ with the dihedral angles given by $\Phi$ if and only if the following conditions hold:\\
($A_1$). When $e_{1}, \cdots, e_{m}$ are all distinct edges meeting at a vertex,
$\sum_{i=1}^{m}\Phi(e_{l})=(m-2)\pi$;\\
($A_2$). When the edges $e_{1}, \cdots, e_{s}$ form a prismatic $s$-circuit,
$\sum_{i=1}^{s}\Phi(e_{l})<(s-2)\pi$.\\
Moreover, $Q$ is unique up to hyperbolic congruences.
\end{theorem}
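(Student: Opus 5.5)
Rivin's theorem is a classical result that this paper quotes rather than proves, so I will only sketch the standard variational proof.

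The plan follows the volume-maximization approach of Rivin, in the form later streamlined by Bobenko--Springborn. First I reduce to a statement about ideal triangulations. Place one ideal vertex $v_\infty$ of the prospective polyhedron at $\infty$ in the upper half-space model. The faces through $v_\infty$ become vertical planes, and the remaining faces project to a Euclidean convex polygon cut into cells by the link data. Equivalently, subdivide each face of $P$ into triangles by diagonals, and give each new diagonal dihedral angle $\pi$. An ideal polyhedron with angles $\Phi$ then amounts to assigning a shape to each ideal tetrahedron, or in the link picture to each Euclidean triangle, such that three conditions hold:
\begin{itemize}
\item the angle sums around each interior vertex are $2\pi$;
\item the angle sums around each edge give the exterior angle $\pi-\Phi(e)$;
\item the shape parameters glue consistently, i.e.\ the edge-length (shear) compatibility holds.
\end{itemize}

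Next I set up the convex problem. Let $\mathcal{A}$ be the polytope of assignments of angles $\alpha>0$ to the corners of the triangles, with the following constraints:
\begin{itemize}
\item each triangle's angles sum to $\pi$;
\item at each vertex the angles sum to $2\pi$;
\item for each edge $e$ of $P$, the sum of the two angles opposite $e$ equals $\pi-\Phi(e)$;
\item for the diagonals, that opposite-angle sum equals $0$ (the weight $\pi$).
\end{itemize}
On $\mathcal{A}$ consider $V(\alpha)=\sum \Lambda(\alpha_i)$, where $\Lambda$ is the Lobachevsky function. This is the total volume of the ideal tetrahedra with these dihedral angles, and it is strictly concave. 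A Lagrange-multiplier computation shows that an interior critical point of $V$ on $\mathcal{A}$ satisfies exactly the length-compatibility equations. By Schläfli and the derivative $\Lambda'(x)=-\log|2\sin x|$, the multipliers become the logarithmic edge lengths, so the critical point yields a genuine ideal polyhedron. Strict concavity gives uniqueness of the maximizer. Together with the fact that any realization produces such a critical point, this gives uniqueness up to congruence.

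The existence step has two parts. First, $(A_1)$ and $(A_2)$ are exactly the conditions that $\mathcal{A}$ has nonempty interior. This follows from a max-flow/min-cut (Hall-type) argument: assigning angles to corners is a feasible flow, and the cut obstructions are the prismatic circuits with $\sum\Phi\ge(s-2)\pi$. Second, the maximum of $V$ on $\overline{\mathcal{A}}$ must not lie on the boundary, where some angle $\alpha_i=0$. This is the main obstacle. I would follow Rivin's argument: at a boundary point, degenerate triangles have angles $0$ and $\pi$ or $0,0,\pi$. Since $\Lambda'(x)\to+\infty$ as $x\to0^+$, moving into the interior increases $V$ at an infinite rate, provided the degenerate angles can be increased without other angles being forced to $0$. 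Making this precise needs a careful combinatorial analysis of the faces of $\mathcal{A}$, for which the strict inequalities $(A_2)$ are again used.

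Necessity is the easy direction. For $(A_1)$, the link of an ideal vertex is a Euclidean polygon, so its exterior angles sum to $2\pi$, which gives $\sum(\pi-\Phi)=2\pi$. For $(A_2)$, a prismatic circuit is cut out by a hyperbolic curve, and Gauss--Bonnet on the resulting non-face region gives the strict inequality.
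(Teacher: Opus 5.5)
The paper gives no proof of this statement; it is quoted from Rivin. So your sketch has to stand on its own. Its architecture (coning from $v_\infty$, a polytope of angle assignments, the volume $\sum\Lambda$, Lagrange multipliers, strict concavity, a flow argument for feasibility) is the right one. As written, however, the existence half fails at two concrete points.

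\textbf{First, your polytope $\mathcal{A}$ has the wrong constraints.}

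\emph{The edge constraint.} The ideal tetrahedron coned from $v_\infty$ over a Euclidean triangle $T$ has, at the edge lying over a side $e$ of $T$, dihedral angle equal to the angle of $T$ opposite $e$. So the dihedral angle of the polyhedron at an edge $e$ not through $v_\infty$ is the sum $\alpha+\beta$ of the two angles opposite $e$. The constraint must therefore read $\alpha+\beta=\Phi(e)$, not $\pi-\Phi(e)$; this is the interior-angle convention you yourself use for $(A_1)$.

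\emph{The diagonal constraint.} For a diagonal, the two adjacent triangles are inscribed in the same circle (the boundary of the face plane), so $\alpha+\beta=\pi$, not $0$. As written, $\alpha+\beta=0$ contradicts $\alpha>0$, so $\mathcal{A}=\emptyset$ whenever $P$ has a non-triangular face.

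\emph{Inconsistency even for simplicial $P$.} Sum the angles of the $m$ triangles around a vertex $v$ not joined to $v_\infty$. Your constraints give $m\pi=2\pi+\sum_{e\ni v}(\pi-\Phi(e))=4\pi$ by $(A_1)$, which is impossible unless $m=4$. So your claim that $(A_1)$ and $(A_2)$ are equivalent to $\mathcal{A}$ having nonempty interior is false for the $\mathcal{A}$ you define.

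\emph{Boundary data.} You impose angle sum $2\pi$ at every vertex. But vertices joined to $v_\infty$ lie on the boundary of the projected polygon, where the sum must be the dihedral angle at $[v,v_\infty]$. Each boundary edge $e$ of the polygon has a single opposite angle, which must equal $\Phi(e)$. Moreover, the faces through $v_\infty$ must be subdivided by diagonals from $v_\infty$, since their remaining vertices project to collinear points.

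With these corrections the vertex conditions become consequences of $(A_1)$, and your Lagrange-multiplier and uniqueness steps go through.

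\textbf{Second, the boundary step is not carried out, and the obstruction you describe is not the real one.} Moving toward an interior point raises all vanishing angles at once, so nothing is ``forced to $0$''. The actual problem is that $\Lambda'(x)=-\log|2\sin x|\to+\infty$ also as $x\to\pi^-$. A triangle of type $(0,0,\pi)$ therefore gives only a finite rate, e.g.\ $2\Lambda(\epsilon)+\Lambda(\pi-2\epsilon)=2\Lambda(\epsilon)-\Lambda(2\epsilon)=2\epsilon\log 2+o(\epsilon)$. Hence $\Lambda'(0^+)=+\infty$ only excludes boundary points at which some triangle has exactly one vanishing angle.

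In the corrected set-up this case can be closed as follows.
\begin{enumerate}
\item If a triangle has an angle $\pi$, the edge opposite that angle has opposite-angle sum at least $\pi$. It must therefore be a diagonal, since every edge of $P$ has $\Phi<\pi$.
\item The triangle across that diagonal then has a vanishing angle at its apex.
\item If that triangle is again of type $(0,0,\pi)$, its angle $\pi$ sits at an endpoint of the diagonal just crossed, so the next edge is different from the one just crossed.
\item One thus walks without backtracking through the triangulation of a single face of $P$. Its dual graph is a tree, so the walk must stop at a triangle with exactly one vanishing angle.
\end{enumerate}
This uses only $\Phi<\pi$; $(A_2)$ enters solely through the nonemptiness of the interior. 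Without some such argument, existence is not established.

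Your necessity argument for $(A_2)$ is likewise only a gesture and would need to be made precise.
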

In the case of $g>0$, the problem was solved by the following theorem due to
Bobenko-Springborn \cite{BS}. We say a closed (not necessarily simple) path $\gamma$
 in $S$ is pseudo-Jordan, if $S\setminus\gamma$ contains a simply-connected component with boundary $\gamma$.
\begin{theorem}\label{Bobenko}(Bobenko-Springborn).
Let $\mathcal{D}$ be a cellular decomposition of a compact oriented
surface $S$ of genus $g>0$. Assume that $\Phi(e)\in(0, \pi)$ is a function defined on its edge set. There
exists a constant curvature (equal to $0$ for $g=1$ and equal to $-1$ for $g>1$) metric $\mu$ on $S$ such
that $(S, \mu)$ supports an ideal $\mathcal{D}$-type circle pattern $\mathcal{P}$ with the exterior intersection angles given
by $\Phi$ if and only if ($B_1$) (\ref{B1}) and the following condition hold:\\
($B_2$). When the edges $e_{1}, \cdots, e_{s}$ form a pseudo-Jordan path which is not the boundary of a
2-cell of $\mathcal{D}$, $\sum_{i=1}^{s}\Phi(e_{l})<(s-2)\pi$.\\
Moreover, the pair $(\mu, \mathcal{P})$ is unique up to isometry if $g>1$, and up to similarity if $g=1$.
\end{theorem}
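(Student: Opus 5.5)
We would prove Theorem~\ref{Bobenko} variationally, parametrising candidate ideal patterns by the data on the 2-cells. Each 2-cell $f\in F$ of an ideal pattern corresponds to an interstice point $p_f$. In the universal cover ($\mathbb{E}^2$ or $\mathbb{H}^2$), the circles through $p_f$ pulled back by inversion in $p_f$ become lines. Equivalently, in the upper half-space model the pattern is the boundary data of an ideal polyhedral surface, and the dihedral angles are $\Phi$.

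\textbf{Step 1 (local geometry).} Subdivide each 2-cell $f$ into triangles by joining $p_f$ to the circle centers. For an edge $e=[u,v]$ shared by faces $f,f'$, the kite $c_u\,p_f\,c_v\,p_{f'}$ is determined by the radii $r_u,r_v$ and $\Phi(e)$. The angle at $p_f$ in this kite is $\pi-\Phi(e)$, so ($B_1$) is exactly the condition that the angles around each $p_f$ sum to $2\pi$. Ideal patterns are therefore equivalent to radius assignments $r\in\mathbb{R}^{|V|}_{>0}$ for which the cone angle at each center $c_v$, a sum of kite angles, equals $2\pi$.

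\textbf{Step 2 (variational principle).} In the variables $u_v=\log r_v$ (Euclidean) or $u_v=\log\tanh(r_v/2)$ (hyperbolic), the kite angles are the gradient of a locally defined function. This function is expressed through Milnor's Lobachevsky function, as in Bobenko--Springborn's functional. Summing over edges and adding $2\pi\sum u_v$ gives a functional $W$ whose critical points are exactly the desired patterns. We would show $W$ is strictly convex: on $\mathbb{R}^{|V|}$ in the hyperbolic case, and modulo the scaling direction $(1,\dots,1)$ in the Euclidean case, where Gauss--Bonnet forces the total curvature to vanish. Strict convexity yields uniqueness up to isometry or similarity.

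\textbf{Step 3 (existence, the main obstacle).} Existence reduces to properness of $W$, that is, to the behaviour of the curvature map $K$ as $u$ tends to the boundary of its domain. Take a sequence with some radii degenerating and let $A\subset V$ be the set of vertices whose radii go to $0$ (or blow up, relative to the others). The limiting total curvature of $A$ is computed from the edges crossing from $A$ to its complement. Since $r_v<r_u$ for $v\in A$ and $u\notin A$, the kite angle at $c_v$ tends to $2(\pi-\Phi(e))$ for each such boundary edge $e$, and toward $0$ at the far end. The boundary of the region spanned by $A$ and its incident cells is then a union of closed edge paths. We would first check that each such path is pseudo-Jordan, using that the component is simply connected for small-radius clusters after lifting to the universal cover. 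A Gauss--Bonnet count over that disk region then shows that the limiting curvature sum is nonnegative only if $\sum\Phi(e_i)\ge (s-2)\pi$ for a pseudo-Jordan path not bounding a 2-cell. Hence ($B_2$) forces $\sum_{v\in A}K_v\to$ a strictly negative value, which contradicts $K\equiv0$ in the limit and gives coercivity of $W$. The delicate part is controlling multiple nested clusters and nonsimple paths; we would handle it by induction on the scale ordering of the radii.

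\textbf{Step 4 (necessity).} For the converse, given an ideal pattern and a pseudo-Jordan path $\gamma$ not bounding a 2-cell, we apply Gauss--Bonnet to the simply connected component bounded by $\gamma$. Since that component contains at least one interior vertex or interstice point carrying positive angle, the inequality is strict, which gives ($B_2$). Condition ($B_1$) follows from Step 1.
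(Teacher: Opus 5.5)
The paper gives no proof of this statement; it is quoted from Bobenko--Springborn, so your sketch has to stand on its own. The overall architecture is sound: Thurston's kite construction, a convex functional of Bobenko--Springborn type, uniqueness from strict convexity, existence from coercivity, and necessity from Gauss--Bonnet on the disk bounded by the path. Steps 1, 2 and 4 are fine in outline. Step 3, however, contains an error and a genuine gap.

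\textbf{The error.} Since the angle at $p_f$ is $\pi-\Phi(e)$, the two angles at the centers in each triangle add up to $\Phi(e)$ (minus the area in $\mathbb{H}^2$). So as $r_v/r_u\to0$ the kite angle at $c_v$ tends to $2\Phi(e)$, not $2(\pi-\Phi(e))$. The correct limit is
\[
\sum_{v\in A}K_v\;\longrightarrow\;2\pi|A|-2\sum_{e\in E,\ \partial e\cap A\neq\emptyset}\Phi(e),
\]
which is the boundary value in Theorem~\ref{Bobenko-Springborn-1}. Coercivity of $W$ therefore amounts to $\sum_{\partial e\cap A\neq\emptyset}\Phi(e)>\pi|A|$ for every nonempty $A\subset V$ (every proper one if $g=1$); cf.\ $(H_3)$ and $(E_3)$.

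\textbf{The gap} is deriving this inequality for \emph{every} $A$ from $(B_2)$. Summing $(B_1)$ over the 2-cells meeting $A$ gives
\[
2\pi|A|-2\sum_{\partial e\cap A\neq\emptyset}\Phi(e)=2\pi\chi(S(A))+\sum_{(e,\tau)\in Lk(A)}\big(\Phi(e)-\pi\big).
\]
Here $S(A)$ is the open star of $A$: the vertices of $A$ together with the open edges and open 2-cells meeting $A$, so that $\chi(S(A))=|A|-\#\{e:\partial e\cap A\neq\emptyset\}+\#\{\tau:\tau\cap A\neq\emptyset\}$. The set $Lk(A)$ consists of the pairs $(e,\tau)$ with $\tau$ meeting $A$ and $e\subset\partial\tau$ disjoint from $A$ (compare $(H_4)$ in Theorem~\ref{Ge-Hua-Zhou-H}). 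The identity splits over the components of $S(A)$.

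Condition $(B_2)$ controls only the components that are open disks. Their frontier is a pseudo-Jordan path that is not the boundary of a 2-cell, and such a component contributes exactly $\sum_i\Phi(e_i)-(s-2)\pi$.

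Your claim that every frontier path of a degenerating cluster is pseudo-Jordan is false. Coercivity has to be checked in the direction shrinking an arbitrary $A$, so a component of $S(A)$ can be an annulus around an essential curve, contain a handle, be punctured at vertices outside $A$, or be all of $S$ when $A=V$. Lifting to the universal cover does not help: an essential annulus lifts to an infinite strip whose frontier is not a closed path.

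What is missing is the argument for these other components:
\begin{itemize}
\item A noncompact component other than a disk has $\chi\le0$. If its link is nonempty, its contribution is automatically negative because $\Phi(e)-\pi<0$.
\item If its link is empty, the component equals $S\setminus(V\setminus A)$ and contributes $2\pi(\chi(S)-|V\setminus A|)$. This is negative for $g\ge2$, and for $g=1$ unless $A=V$, which is the scaling direction.
\end{itemize}
This is exactly where the genus enters, and your Step 3 never uses it.

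\textbf{A smaller point.} In the hyperbolic chart $u_v=\log\tanh(r_v/2)$, the blow-up $r_v\to\infty$ is the finite boundary $u_v\to0$. There $W$ stays bounded, since its gradient is the bounded curvature. So you must either extend $W$ past $u_v=0$ and show the critical point has all $u_v<0$, or treat this end separately.
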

Recall that $\mathcal{D}$ is a cellular decomposition of $S$. For each 2-cell $\tau$ of $\mathcal{D}$, add a vertex into $\tau$ and add edges to connect this new vertex to those vertices of $\tau$. Doing this procedure
for all 2-cells, one obtains a triangulation $\mathcal{T}(\mathcal{D})$ of $S$. Let $V^{\diamond}, E^{\diamond}, F^{\diamond}$ denote the sets of vertices, edges and triangles of $\mathcal{T}(\mathcal{D})$. Then we have the following identification
$$V^{\diamond}=V\cup F, E^{\diamond}=E\cup E_{\tau v}$$
where $E_{\tau v}$ consists of the edges $[\tau, v]$ such that $\tau$ is a 2-cell of $\mathcal{D}$ and $v\in V$ is a vertex of $\tau$.
For each $v^{\diamond}\in V^{\diamond}$, we call it a \emph{primal vertex}, if $v^{\diamond}\in V$; Otherwise, we call it a \emph{star-vertex}.\par
Let us introduce Thurston's construction for circle patterns \cite{T}. Assume
that $V=\{v_{1},\ldots, v_{N}\}$, where $N =|V|$ is the number of vertices. We start with a radius vector $r=(r_1,\cdots, r_N)\in\mathbb{R}^N_{>0}$, which assigns a circle with radius $r_i$ at every vertex $i$, each vertex $v_{1}$ a positive number $r_i$. A radius vector $r$ produces a hyperbolic (or Euclidean)
cone metric on the surface $S$ as follows.

First, note that each triangle $\triangle(v_{i}v_{j}\tau_{k})$ of $\mathcal{T}(\mathcal{D})$ is marked by an edge $e_{ij}=[v_{i}, v_{j}]$ and a 2-
cell $\tau_{k}$ of $\mathcal{D}$. Because $\Phi_{ij}=\Phi(e_{ij})=\Phi([v_{i}, v_{j}])\in(0, \pi)$, there exists a pair of two intersecting
circles in hyperbolic (or Euclidean) geometry, unique up to isometry, having radii $r_{i}, r_{k}$
and meeting in angle $\Phi_{ij}$. One associates $\triangle(v_{i}v_{j}\tau_{k})$ with the hyperbolic (or Euclidean)
triangle whose vertices are the two centers and one of the intersection points of the above
two-circle configuration (see Section 3 for more details). More precisely, let $l_{ij}$, $l_{jk}$ and $l_{ki}$ denote the lengths of
the sides of this triangle. Then
$$l_{jk}=r_{i}, l_{ki}=r_{j}$$
and
\begin{equation}\label{length-H}
l_{ij} =\cosh^{-1}(\cosh r_{i}\cosh r_{j}+\sinh r_{i}\sinh r_{j}\cos\Phi(e_{ij}))
\end{equation}
in the hyperbolic background geometry, or
\begin{equation}\label{length-E}
l_{ij} =\sqrt{r_{i}^{2}+r_{j}^{2}+2r_{i}r_{j}\cos\Phi(e_{ij})}
\end{equation}
in the Euclidean background geometry.
Because $l_{ij}$, $l_{jk}$ and $l_{ki}$ satisfy the triangle inequalities, the
process works well.\par
Gluing all these hyperbolic (or Euclidean) triangles along their common edges then
produces a hyperbolic (or Euclidean) cone metric $\mu(r)$ on $S$ with possible singularities at
vertices of $\mathcal{T}(\mathcal{D})$. Let us investigate the cone angle. For any star-vertex $\tau\in V^{\diamond}$ which
represents a 2-cell bounded by the edges $e_{1}, \cdots, e_{m}\in E$, by ($B_1$) (\ref{B1}), the cone angle at $\tau$ is
$$\sum_{i=1}^{m}(\pi-\Phi(e_{i}))=m\pi-\sum_{i=1}^{m}\Phi(e_{i})=m\pi-(m-2)\pi=2\pi.$$
Therefore the gluing metric has no singularity at any star-vertex. It remains to deal with
the cone singularities at primal vertices. For each $v_{i}\in V$, the vertex curvature at $v_{i}$ is
defined by $K_{i}=2\pi-a_{i}$, where $a_{i}$ denotes the cone angle at $v_{i}$. Let $\theta^{j}_{i}, \theta^{i}_{j}$ denote the two corresponding inner angles at the centers $i, j$ in the $\triangle(v_{i}v_{j}\tau_{k})$. Then according to the construction above, the cone angle $a_{i}=2\sum_{j\sim i}\theta^{j}_{i}$ at $v_{i}$ (see Section 3 for more details). Therefore, the vertex curvature $K_{i}$ at $v_{i}$ can be expressed as follows.
\begin{equation}\label{vertex-curvature}
K_{i}=2\pi-2\sum_{j\sim i}\theta^{j}_{i}
\end{equation}
Consider the following curvature map
 \begin{align*}
 \mathbf{K} : \mathbb{R}^N_{>0}&\rightarrow \mathbb{R}^N\\
     r=(r_1,\cdots, r_N)&\mapsto K=(K_1,\cdots, K_N).
 \end{align*}
It is important to show that the origin $(0,\cdots, 0)$ belongs to the image. If
there exists a radius vector $r^{*}$ such that $K_{i}(r^{*})=0$ for $i = 1,\cdots, N$, then it produces
a smooth hyperbolic (or Euclidean) metric on the compact surface $S$. Drawing the circle centered at $v_{i}$ with radius $r^{*}$, one will obtain the desired ideal circle pattern realizing $(\mathcal{D}, \Phi)$. To this end, a key step is to give a description of the image of the curvature map. This is settled
by Bobenko-Springborn \cite{BS} via a variational principle.
\begin{theorem}\label{Bobenko-Springborn-1}(Bobenko-Springborn).
Let $S$ be an oriented closed surface with a cellular decomposition $\mathcal{D}$. Assume that $\Phi: E\rightarrow (0, \pi)$ satisfies the condition ($B_1$) (\ref{B1}. In the hyperbolic background geometry, $\mathbf{K}$ is injective. Moreover, the image of this map consists of vectors $(K_1, \cdots, K_N)$ satisfying
\begin{equation*}\label{con-1}
K_{i}< 2\pi, i=1,\ldots, N,
\end{equation*}
and
\begin{equation*}\label{con-2}
\sum_{i\in A}K_{i}> 2\pi|A|-2\sum_{e\in E, \partial e\cap A\neq \emptyset}\Phi(e)
\end{equation*}
for any non-empty subset $A\subset V$, where $\partial e$ denotes the two end vertices of an edge $e$.
\end{theorem}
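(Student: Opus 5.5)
Our plan is to follow the variational approach of Bobenko--Springborn, working in the coordinates $u_i=\ln\tanh(r_i/2)$, which range over $\mathbb{R}^N_{<0}$ as $r$ ranges over $\mathbb{R}^N_{>0}$. The first step is a local computation in one triangle $\triangle(v_iv_j\tau_k)$. Its sides are $r_i$, $r_j$ and $l_{ij}$ from (\ref{length-H}), and the angle at the star-vertex is $\pi-\Phi_{ij}$. For this triangle we will verify the symmetry
\[
\frac{\partial\theta^j_i}{\partial u_j}=\frac{\partial\theta^i_j}{\partial u_i}.
\]
This comes from the hyperbolic cosine and sine laws, or equivalently from the fact that the three-vertex triangle with one fixed angle has a potential. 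Together with $\partial\theta^j_i/\partial u_i<0$ and $\partial\theta^j_i/\partial u_j>0$, and a strict diagonal dominance that is forced by the hyperbolic area term, this shows that the closed form $\sum_i K_i\,du_i$ is exact. Its primitive $W(u)$ is strictly convex on $\mathbb{R}^N_{<0}$, since the Jacobian $\partial K/\partial u$ is symmetric positive definite. Strict convexity gives the injectivity of $\mathbf{K}$, and $\nabla W=\mathbf{K}$ shows that $\mathbf{K}$ is a diffeomorphism onto an open set.

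Next we check that the image lies in the stated polytope $\mathcal{Y}$. Since $\theta^j_i>0$, we get $K_i<2\pi$. For a subset $A$, we sum $K_i$ over $i\in A$. In a triangle whose two primal vertices both lie in $A$, the angle sum satisfies
\[
\theta^j_i+\theta^i_j<\pi-(\pi-\Phi_{ij})=\Phi_{ij},
\]
which is strict by hyperbolic area. In a triangle with only $i\in A$, we have $\theta^j_i<\Phi_{ij}$. Each edge appears in two triangles, so
\[
\sum_{i\in A}K_i>2\pi|A|-2\sum_{\partial e\cap A\neq\emptyset}\Phi(e).
\]

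To show the image equals $\mathcal{Y}$, we use that $\mathcal{Y}$ is convex, hence connected, and that $\mathbf{K}(\mathbb{R}^N_{>0})\subset\mathcal{Y}$ is open. It then suffices to prove that $\mathbf{K}$ is proper into $\mathcal{Y}$: if $r^{(n)}$ leaves every compact subset of $\mathbb{R}^N_{>0}$, then $K(r^{(n)})$ must approach $\partial\mathcal{Y}$ or infinity. We pass to a subsequence and split $V$ into $A_0=\{i:r_i\to0\}$, $A_\infty=\{i:r_i\to\infty\}$, and the bounded rest. If $r_i\to\infty$, the angles $\theta^j_i\to0$, so $K_i\to2\pi$, a boundary value. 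If $A_0\neq\emptyset$ and $A_\infty=\emptyset$, consider a triangle with both primal ends in $A_0$. It degenerates to a Euclidean triangle, so $\theta^j_i+\theta^i_j\to\Phi_{ij}$. If $i\in A_0$ and $j\notin A_0$, then $\theta^j_i\to\Phi_{ij}$, from the two-circle configuration with a vanishing circle. Hence $\sum_{i\in A_0}K_i$ tends to the right-hand side of the $A_0$ inequality, again a boundary point. Therefore $\mathbf{K}$ is a proper local homeomorphism into the connected set $\mathcal{Y}$, hence a covering, hence surjective.

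We expect the properness analysis to be the main obstacle, particularly the mixed case where some radii go to $0$ and others to $\infty$. There we would argue that the $A_\infty$ vertices already push $K$ to $\partial\mathcal{Y}$, so the only delicate case is $A_\infty=\emptyset$. That case needs the precise limits of the angles as described. Uniformity of these limits over the bounded radii would be obtained by compactness of the bounded set and continuity of the angle functions.
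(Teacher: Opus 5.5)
The paper does not prove this statement. It is quoted from Bobenko--Springborn and used only as a black box (for uniqueness in Section~3), so there is no internal proof to compare against. Judged on its own, your outline is correct.

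The two computations you only gesture at both check out. In $u_i=\ln\tanh(r_i/2)$ one finds
\[
\frac{\partial\theta^j_i}{\partial u_j}=\frac{\sin\Phi_{ij}\,\sinh r_i\sinh r_j}{\sinh^2 l_{ij}}=\frac{\partial\theta^i_j}{\partial u_i},\qquad
\frac{\partial\theta^j_i}{\partial u_i}=-\cosh l_{ij}\,\frac{\partial\theta^j_i}{\partial u_j}.
\]
This gives symmetry, and strict diagonal dominance because $\cosh l_{ij}>1$. You therefore get a strictly convex potential, injectivity and openness.

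For the degenerations, the cotangent formula
$\cot\theta^j_i=(\sinh r_i\coth r_j+\cosh r_i\cos\Phi_{ij})/\sin\Phi_{ij}$
does all the work:
\begin{itemize}
\item Since $\coth r_j>1$ and $1+\cos\Phi_{ij}>0$, it gives $\theta^j_i\to0$ as $r_i\to\infty$, uniformly in $r_j\in(0,\infty)$. You need this uniformity, because a neighbour of an $A_\infty$ vertex may itself tend to $0$ or $\infty$. Compactness of the bounded radii, which is what you invoke, does not cover that case.
\item It gives $\theta^j_i\to\Phi_{ij}$ as $r_i\to0$ with $r_j$ bounded away from $0$.
\item Two shrinking radii are handled by the vanishing area, as you say.
\end{itemize}
So the image is nonempty, open and closed in the convex set $\mathcal{Y}$, hence equal to it.

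Relative to the original source, your route to surjectivity is genuinely different. Bobenko--Springborn work with a convex functional in the same variables $\ln\tanh(r/2)$, which gives uniqueness. For existence, they show that realizability is equivalent to the existence of a coherent angle system, a linear feasibility problem on the corner angles of $\mathcal{T}(\mathcal{D})$. They then decide feasibility combinatorially by a network-flow argument, and the subset inequalities are exactly the feasibility conditions.

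You argue topologically instead, in the style of Thurston and Chow--Luo: properness of $\mathbf{K}$ into $\mathcal{Y}$ plus connectedness of $\mathcal{Y}$. The inequalities then appear directly as the limiting boundary values of the only two possible degenerations. Your approach is more elementary and uses nothing beyond sharpened versions of the two-circle asymptotics of Section~2, so it fits naturally with the paper's own flow estimates. The original argument instead yields explicit functionals and a linear-programming description of realizability, which connects it to Rivin's work on ideal polyhedra.
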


\begin{theorem}\label{Bobenko-Springborn-2}(Bobenko-Springborn).
Let $S$ be an oriented closed surface with a cellular decomposition $\mathcal{D}$ and a weight $\Phi\in(0, \pi)$. In the Euclidean background geometry, $\mathbf{K}$ is injective up to scaling. Moreover, the image of this map consists of vectors $(K_1, \cdots, K_N)$ satisfying
\begin{equation*}
K_{i}< 2\pi, i=1,\ldots, N,
\end{equation*}
and
\begin{equation*}
\sum_{i\in A}K_{i}\geq 2\pi|A|-2\sum_{e\in E, \partial e\cap A\neq \emptyset}\Phi(e)
\end{equation*}
for any non-empty subset $A\subset V$, where the equality holds if and only if $A =V$.
\end{theorem}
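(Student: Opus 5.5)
We treat the Euclidean statement, Theorem \ref{Bobenko-Springborn-2}, by the variational method of Bobenko--Springborn. We use the Euclidean analogue of the route behind Theorem \ref{Bobenko-Springborn-1}, adapted to the scaling invariance. Work in logarithmic coordinates $u_i=\ln r_i$.

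\emph{Step 1: a potential.} For each triangle $\triangle(v_iv_j\tau_k)$ the angles $\theta_i^j,\theta_j^i$ depend only on $u_i-u_j$ and on $\Phi_{ij}$. A direct computation from (\ref{length-E}) gives the symmetry
\[
\partial\theta_i^j/\partial u_j=\partial\theta_j^i/\partial u_i ,
\]
and this common value is positive. It follows that the $1$-form $\sum_i K_i\,du_i$ is closed, so $K=\nabla F$ for an energy $F$ on $\mathbb{R}^N$. The Hessian is $2$ times a weighted graph Laplacian with positive weights on $E$. Since $\mathcal D$ is connected, this Laplacian is positive semidefinite with kernel exactly the span of $(1,\dots,1)$. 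Hence $F$ is convex, strictly convex on the hyperplane $\sum u_i=0$, and invariant along $(1,\dots,1)$. The last fact reflects scaling and the Gauss--Bonnet identity $\sum K_i=2\pi\chi(S)$. Strict convexity on the hyperplane gives injectivity of $\mathbf K$ up to scaling.

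\emph{Step 2: necessity.} Each $K_i<2\pi$ is immediate because $a_i>0$. Fix $A\subsetneq V$ and let every $u_j$, $j\notin A$, tend to $+\infty$ relative to $u_i$, $i\in A$. We use the limiting angles: $\theta_i^j\to\pi-\Phi_{ij}$ when $r_i/r_j\to 0$, and $\theta_i^j\to0$ in the opposite limit. Then
\[
\sum_{i\in A}K_i\;\to\;2\pi|A|-2\sum_{\partial e\cap A\ne\emptyset}\Phi(e)+(\text{terms from }E(A,A))
\]
in the appropriate form. The genuine strict inequality for fixed $r$ should follow from monotonicity of $\sum_{i\in A}K_i$ along the direction $\mathbf 1_{V\setminus A}$. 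Its derivative is given by the Laplacian cross terms, which have a sign. So the quantity increases strictly toward the limit value. One must carefully account for the edges with both ends in $A$ and the identity coming from ($B_1$); this amounts to checking that the boundary value matches the right-hand side exactly. For $A=V$, Gauss--Bonnet together with ($B_1$) gives equality.

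\emph{Step 3: sufficiency.} Take a target $\hat K$ satisfying the conditions, with equality for $A=V$. Minimize
\[
G(u)=F(u)-\sum_i\hat K_iu_i
\]
on $\sum u_i=0$; the equality at $A=V$ makes $G$ invariant along $\mathbf 1$. A critical point has $K(u)=\hat K$, so it suffices to show that $G$ is proper on the hyperplane.

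Properness is the main obstacle. Along any ray $u+t\xi$ with $\xi\perp\mathbf 1$ and $\xi\neq 0$, compute the asymptotic slope
\[
\lim_{t\to\infty}\langle K(u+t\xi)-\hat K,\xi\rangle .
\]
Order the distinct values of $\xi$ and write $\xi$ as a positive combination of indicator functions $\mathbf 1_{A_\ell}$ of the superlevel sets plus a multiple of $\mathbf 1$. By Step 2's limits, the slope equals a positive combination of terms of the form
\[
\Bigl(2\pi|A'|-2\sum_{\partial e\cap A'\ne\emptyset}\Phi(e)\Bigr)-\sum_{i\in A'}\hat K_i ,
\]
taken over the complementary sublevel sets $A'$. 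Each such term is negative by hypothesis, since $A'\neq V$. Therefore every slope is positive, and by convexity with compactness of the unit sphere in the hyperplane, $G$ is coercive. I expect the delicate point to be making this level-set decomposition exact. This requires handling edges joining two level sets and using ($B_1$) to rewrite the star-vertex contributions, so that the limit matches precisely the linear form in the statement.
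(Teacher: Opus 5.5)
The paper does not prove this theorem. It quotes it from Bobenko--Springborn, and your framework is their variational principle: log-radii, a convex potential with $\nabla F=K$ whose Hessian is a weighted Laplacian, injectivity from strict convexity transverse to $\mathbf 1$, and existence from positive asymptotic slopes. That architecture is sound. However, the one computation that produces the inequalities in the statement is wrong as written, and the ``delicate point'' you anticipate comes from that error.

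\textbf{The limiting angle.} In $\triangle(v_iv_j\tau_k)$ the angle at the intersection point is $\pi-\Phi_{ij}$, so $\theta_i^j+\theta_j^i=\Phi_{ij}$ identically. From $\cos\theta_i^j=(r_i+r_j\cos\Phi_{ij})/l_{ij}$ one gets $\theta_i^j\to\Phi_{ij}$ as $r_i/r_j\to0$, not $\pi-\Phi_{ij}$, and $\theta_i^j\to0$ as $r_i/r_j\to\infty$. Your value $\pi-\Phi_{ij}$ is inconsistent with the angle-sum identity. With it, the boundary value in Step 2 is not $2\pi|A|-2\sum_{\partial e\cap A\neq\emptyset}\Phi(e)$, and no use of $(B_1)$ repairs this. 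With the correct value, edges inside $A$ contribute $\Phi_{ij}$ exactly for every $r$, and crossing edges contribute $\Phi_{ij}$ in the limit. So the limit is exactly the right-hand side, with no extra terms.

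\textbf{The direction of monotonicity.} Your monotonicity also runs the wrong way. By Proposition \ref{derivative}(2), $\partial K_i/\partial u_j<0$ for adjacent $j\neq i$. Hence $\sum_{i\in A}K_i$ strictly \emph{decreases} along $\mathbf 1_{V\setminus A}$; strictness holds because the 1-skeleton is connected, so some edge crosses from $A$ to $V\setminus A$. Thus the sum is strictly \emph{larger} than its limit. ``Increases toward the limit'' would give the reverse inequality.

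\textbf{Step 3.} The same correction makes your level-set decomposition immediate. Each edge contributes $-2\min(\xi_i,\xi_j)\Phi_{ij}$ to $\lim_{t\to\infty}\langle K(u+t\xi),\xi\rangle$; when $\xi_i=\xi_j$ this is just the identity $\theta_i^j+\theta_j^i=\Phi_{ij}$. Let $c_1<\dots<c_m$ be the distinct values of $\xi$, let $A_\ell=\{\xi\ge c_\ell\}$, and write $\xi=c_1\mathbf 1+\sum_{\ell\ge2}(c_\ell-c_{\ell-1})\mathbf 1_{A_\ell}$. Then
\[
\lim_{t\to\infty}\bigl\langle K(u+t\xi)-\hat K,\xi\bigr\rangle=\sum_{\ell\ge2}(c_\ell-c_{\ell-1})\Bigl(\sum_{i\in A'_\ell}\hat K_i-2\pi|A'_\ell|+2\sum_{\partial e\cap A'_\ell\neq\emptyset}\Phi(e)\Bigr),\qquad A'_\ell=V\setminus A_\ell .
\]
Here the $A=V$ equality kills the $c_1$ term and lets you pass to complements. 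That equality is Gauss--Bonnet, since $(B_1)$ gives $\sum_{e\in E}\Phi(e)=\pi(|E|-|F|)$. This is the only place $(B_1)$ enters; it is not needed for edges joining level sets.

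Each bracket is \emph{positive} by hypothesis. The brackets you wrote are their negatives, so ``a positive combination of negative terms, hence positive'' is internally inconsistent as stated.

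\textbf{Minor point.} $F$ itself is not invariant along $\mathbf 1$: $F(u+s\mathbf 1)=F(u)+2\pi\chi(S)s$. Only $K=\nabla F$ and $G$ are invariant, and that is all you actually use.
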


A more challenging problem is to search the desired ideal circle patterns. For this purpose, Ge-Hua-Zhou \cite{GHZ-2} considered Chow-Luo's combinatorial Ricci flows, namely discrete versions of the Ricci flow which was introduced by Hamilton \cite{RH}. As a powerful tool, the Ricci flow has been used to study geometric structures on a given manifold, which is extremely important in differential geometry. Especially, Perelman \cite{Pe} proved the Poincar$\acute{\textrm{e}}$ conjecture and Thurston's geometrization conjecture using the Ricci flow. In addition, there have been many recent profound results such as Jiang-Naber \cite{JN} and Cheeger-Jiang-Naber \cite{CJN} on geometric structures related to the Ricci curvature. \par
The combinatorial Ricci flows which are the analogue of Hamilton's Ricci flow in the combinatorial setting were introduced by Chow-Luo \cite{CL}. They obtained a new proof of the famous Thurston's circle packings theorems, by showing that the combinatorial Ricci flows produce solutions which converge exponentially fast to Thurston's circle packings on surfaces. Since then, the combinatorial Ricci flows provided an effective tool for studying geometric structures on low dimensional manifolds. We refer the the readers to see \cite{FGH, Ge-thesis, Ge1, Ge2, GH, GHZ, GHZ-2, GJ, GJS, GL}. \par
Especially, Ge-Hua-Zhou \cite{GHZ-2} was the first to introduce the combinatorial Ricci flows with ideal circle patterns in the hyperbolic background geometry such as
\begin{equation}\label{equ-H}
\frac{dr_i}{dt}=-K_i\sinh r_i,
\end{equation}
and in the Euclidean background geometry such as
\begin{equation}\label{equ-E}
\frac{dr_i}{dt}=(K_{av}-K_i) r_i,
\end{equation}
for $i =1, \ldots, N$, with an initial radius vector $r(0)\in R^{N}_{>0}$, where $K_{av}=2\pi\chi(S)/N$.\par

Furthermore, Ge-Hua-Zhou \cite{GHZ-2} established the theory of combinatorial Ricci flows for ideal circle patterns which not only give a more systematic and comprehensive answer to Question \ref{question-existence}, but also provided an algorithm (converging exponentially fast) to find the desired
ideal circle patterns (and ideal hyperbolic polyhedra). Ge-Hua-Zhou's main results are stated as follows.
\begin{theorem}\label{Ge-Hua-Zhou-H}(Ge-Hua-Zhou).
Let $S$ be an oriented closed surface with a cellular decomposition $\mathcal{D}$. Assume that $\Phi: E\rightarrow (0, \pi)$ satisfies the condition ($B_1$) (\ref{B1}. In the hyperbolic background geometry, the solution $r(t)$ to the flow (\ref{equ-H}) exists for all the time $t\geq 0$, and the following ($H_1$)-($H_5$) are all equivalent:\\
($H_1$). $r(t)$ converges as $t\rightarrow +\infty$.\\
($H_2$). The origin $(0, \ldots, 0)$ belongs to the image of the curvature map.\\
($H_3$). When $A$ is a non-empty subset of $V$, the following condition holds:\\
\begin{equation*}
\sum_{e\in E, \partial e\cap A\neq \emptyset}\Phi(e)> \pi|A|.
\end{equation*}
($H_4$). When $A$ is a non-empty subset of $V$, the following condition holds:
\begin{equation*}
\sum_{(e, \tau) \in Lk(A)}(\Phi(e)-\pi)+2\pi\chi(S(A))<0.
\end{equation*}
($H_5$). The genus $g>1$ and the weight satisfies $\sum_{l=1}^{s}\Phi(e_{l})<(s-2)\pi$, whenever $e_1,\ldots, e_s$ form a \textbf{pseudo-Jordan} curve which is not the boundary of a 2-cell of $\mathcal{D}$.\\
Moreover, if the flow (\ref{equ-H}) converges, then it converges exponentially fast to the unique radius vector so that all vertex curvatures are equal to zero.
\end{theorem}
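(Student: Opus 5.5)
We establish long-time existence first and then close the cycle of implications $(H_1)\Rightarrow(H_2)\Rightarrow(H_3)\Rightarrow(H_1)$. Separately, we connect $(H_3)$ with $(H_4)$ and $(H_5)$ by combinatorial arguments.

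\textbf{Long-time existence.} Put $u_i=\ln\tanh(r_i/2)$, so $u_i\in(-\infty,0)$. In these coordinates the flow (\ref{equ-H}) becomes $\dot u_i=-K_i$. The curvature $K_i$ is a smooth, bounded function of $u$; indeed $|K_i|\le 2\pi$ up to the bounded angle sum, since each $\theta_i^j\in(0,\pi)$. Hence $u_i(t)$ changes at most linearly in $t$, and $u$ cannot reach $-\infty$ in finite time.

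We must also prevent $u_i\to 0^-$, i.e.\ $r_i\to\infty$, in finite time. For this we use the standard estimate that $\theta_i^j\to 0$ as $r_i\to\infty$, uniformly in $r_j$. This follows from the hyperbolic law of cosines applied to the triangle with sides $r_i,r_j,l_{ij}$. Consequently, once $r_i$ is large we have $K_i>0$, so $\dot r_i<0$ and $r_i$ stays bounded.

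\textbf{Energy functional.} The Jacobian $L=\partial(K_1,\ldots,K_N)/\partial(u_1,\ldots,u_N)$ is symmetric and positive definite in the hyperbolic setting, by the Bobenko--Springborn variational principle. Therefore $F(u)=\int\sum_i K_i\,du_i$ is well defined and strictly convex, and the flow is its negative gradient flow.

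\textbf{The cycle of implications.}
\begin{itemize}
\item $(H_1)\Rightarrow(H_2)$: if $r(t)\to r^*\in\mathbb{R}^N_{>0}$, then $\dot u\to -K(r^*)$. For $u$ to converge this limit must vanish, so $K(r^*)=0$.
\item $(H_2)\Leftrightarrow(H_3)$: put $K=0$ in Theorem \ref{Bobenko-Springborn-1}. The condition $K_i<2\pi$ holds trivially, and the subset condition reads $0>2\pi|A|-2\sum\Phi(e)$, which is exactly $(H_3)$.
\item $(H_2)\Rightarrow(H_1)$: if $u^*$ is a zero of $K$, then $F$ is strictly convex with critical point $u^*$ and is therefore proper. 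The flow decreases $F$, so $u(t)$ stays in a compact subset of $(-\infty,0)^N$. Near $u^*$ the linearization is $-L(u^*)$, which is negative definite, so we get exponential convergence and uniqueness, the latter from injectivity of $\mathbf K$.
\end{itemize}

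The main obstacle is the compactness claim that a sublevel set of $F$ stays away from the boundary $u_i\in\{-\infty,0\}$. Rather than analyse the boundary directly, we use the existence of $u^*$ together with strict convexity on the convex domain. This yields coercivity, provided we check that $F$ extends continuously to the closure, which uses the boundedness of the angles.

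\textbf{$(H_3)\Leftrightarrow(H_4)$.} For $A\subset V$, let $S(A)$ be the subcomplex of $\mathcal T(\mathcal D)$ spanned by $A$ together with the adjacent 2-cells, and let $Lk(A)$ be its boundary link of pairs $(e,\tau)$. Compute $\chi(S(A))$ from the counts of vertices, edges and faces, and use $(B_1)$ on each 2-cell to convert sums over faces into sums of $\Phi$. The expression in $(H_4)$ should then reduce to $2\pi|A|-2\sum_{\partial e\cap A\ne\emptyset}\Phi(e)$, possibly up to terms that vanish by $(B_1)$. This is a careful but routine bookkeeping computation.

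\textbf{$(H_4)\Leftrightarrow(H_5)$.}
\begin{itemize}
\item Taking $A=V$ in $(H_3)$ and applying Euler's formula together with $(B_1)$ gives $\sum_E\Phi=\pi(|E|-|F|)\cdot$const, which forces $2\pi\chi(S)<0$, i.e.\ $g>1$.
\item Conversely, with $g>1$ Theorem \ref{Bobenko} supplies a hyperbolic ideal circle pattern. Its radii give a zero of the curvature map, which is $(H_2)$; and $(H_2)\Rightarrow(H_3)\Rightarrow(H_4)$ by the previous steps.
\item For $(H_2)$ with $g>1$: a zero of $K$ gives a smooth hyperbolic metric carrying the pattern. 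Bobenko--Springborn's necessity then yields $(B_2)$, and $(B_2)$ together with $g>1$ is exactly $(H_5)$.
\end{itemize}
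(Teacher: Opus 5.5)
\paragraph{Comparison with the paper.} The paper does not prove this statement. It is quoted from Ge--Hua--Zhou \cite{GHZ-2} as background, and the paper only borrows pieces of it: Lemma 3.3, Proposition 3.5 and Theorem 1.7 of \cite{GHZ-2}, plus Bobenko--Springborn injectivity. So there is no proof to compare routes against, and your proposal has to stand on its own.

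Your architecture is sound:
\begin{itemize}
\item long-time existence with an a priori upper bound;
\item $(H_1)\Rightarrow(H_2)$ from the limit of $\dot u$;
\item $(H_2)\Leftrightarrow(H_3)$ by setting $K=0$ in Theorem \ref{Bobenko-Springborn-1};
\item $(H_2)\Leftrightarrow(H_5)$ through both directions of Theorem \ref{Bobenko}, after extracting $g>1$ from $A=V$;
\item $(H_2)\Rightarrow(H_1)$ through the convex potential.
\end{itemize}
You read $(H_1)$ as convergence to a point of $\mathbb{R}^N_{>0}$. That reading is necessary: by Theorem \ref{thm-converge-zero} the flow can converge to $0$.

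\paragraph{The compactness step in $(H_2)\Rightarrow(H_1)$.} This step does not work as written. A strictly convex function with an interior critical point need not be proper on a convex domain that is not all of $\mathbb{R}^N$. For example, $F(u)=(u+1)^2$ on $(-\infty,0)$ has sublevel set $\{F\le 1\}=[-2,0)$, which is not compact in the domain.

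Your proposed remedy does not help. Extending $F$ continuously to $(-\infty,0]^N$ is automatic, since $F$ is Lipschitz because the $K_i$ are bounded. But sublevel sets of the extension can still touch $\{u_i=0\}$.

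What convexity does give is boundedness in $\mathbb{R}^N$. Choose $B(u^*,\epsilon)\subset(-\infty,0)^N$ and set $\delta=\min_{|w-u^*|=\epsilon}(F(w)-F(u^*))>0$. Convexity along the segment from $u^*$ to $u$ then gives $F(u)-F(u^*)\ge\delta|u-u^*|/\epsilon$ whenever $|u-u^*|\ge\epsilon$. This excludes $r_i\to 0$.

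The other end, $u_i\to0^-$ (that is, $r_i\to\infty$), needs a separate input, and you already have it. Your existence step shows $\theta_i^j\to0$ as $r_i\to\infty$ uniformly in $r_j$, which gives $r_i(t)\le\max\{r_i(0),C\}$. This is the same split the paper uses in Section 3: an upper bound from Theorem \ref{existence-H}, plus an independent lower bound. There the lower bound comes from the character condition; here it comes from coercivity.

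After that you still need one short argument before linearising at $u^*$. Along the flow, $\frac{d}{dt}F(u(t))=-|K(u(t))|^2$, and the trajectory is bounded. Hence $K(u(t))\to0$, and the $\omega$-limit set lies in $\{u^*\}$. Only then does the negative definiteness of $-L(u^*)$ give the exponential rate.

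\paragraph{The equivalence $(H_3)\Leftrightarrow(H_4)$.} This is only sketched, but with your $S(A)$ it closes exactly, and you should write it out. Notation:
\begin{itemize}
\item $E_A$ is the set of edges meeting $A$;
\item $F_A$ is the set of 2-cells having a vertex in $A$;
\item $Lk(A)$ is the set of pairs $(e,\tau)$ with $\tau\in F_A$, $e\subset\partial\tau$ and $\partial e\cap A=\emptyset$.
\end{itemize}
Apply $(B_1)$ to each $\tau\in F_A$, and note that both sides of every $e\in E_A$ lie in faces of $F_A$. This gives
\[
\sum_{(e,\tau)\in Lk(A)}(\Phi(e)-\pi)=-2\pi|F_A|+2\pi|E_A|-2\sum_{e\in E_A}\Phi(e).
\]
Next, count cells in the full subcomplex of $\mathcal{T}(\mathcal{D})$ spanned by $A\cup F_A$. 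Each $v\in A$ contributes $\deg v$ edges $[v,\tau]$, and each edge with both ends in $A$ contributes two triangles. This gives $\chi(S(A))=|A|-|E_A|+|F_A|$. So the left side of $(H_4)$ equals $2\pi|A|-2\sum_{e\in E_A}\Phi(e)$ for every $A$, with no leftover terms.

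\paragraph{Minor point.} In the $A=V$ computation the factor ``$\cdot$const'' is spurious: $(B_1)$ gives exactly $\sum_E\Phi=\pi(|E|-|F|)$.
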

\begin{theorem}\label{Ge-Hua-Zhou-E}(Ge-Hua-Zhou).
Let $S$ be an oriented closed surface with a cellular decomposition $\mathcal{D}$ and a weight $\Phi\in(0, \pi)$. In the Euclidean background geometry, the solution $r(t)$ to the flow (\ref{equ-E}) exists for all the time $t\geq 0$, and the following ($E_1$)-($E_4$) are all equivalent:\\
($E_1$). $r(t)$ converges as $t\rightarrow +\infty$.\\
($E_2$). The constant vector $(K_{av}, \ldots, K_{av})$ belongs to the image of the curvature map.\\
($E_3$). When $A$ is a non-empty subset of $V$, the following condition holds:\\
\begin{equation*}
\frac{\pi\chi(S)|A|}{|V|}> \pi|A|-\sum_{e\in E, \partial e\cap A\neq \emptyset}\Phi(e).
\end{equation*}
($E_4$). When $A$ is a non-empty subset of $V$, the following condition holds:
\begin{equation*}
\frac{2\pi\chi(S)|A|}{|V|}> \sum_{(e, \tau) \in Lk(A)}(\Phi(e)-\pi)+2\pi\chi(S(A)).
\end{equation*}
Moreover, if the flow (\ref{equ-E}) converges, then it converges exponentially fast to a radius vector so that all vertex curvatures are equal to $K_{av}$.
\end{theorem}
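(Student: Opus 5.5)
Since the statement is the Euclidean counterpart of Theorem \ref{Ge-Hua-Zhou-H}, I would organize the proof as a cycle of implications using the change of variables $u_i=\ln r_i$. In these variables the flow (\ref{equ-E}) becomes $\frac{du_i}{dt}=K_{av}-K_i$. Long-time existence is immediate from two facts. First, each $K_i$ is a smooth bounded function of $u$: every inner angle $\theta^j_i$ lies in $(0,\pi)$, so $|K_i|$ is bounded by a constant depending only on the vertex degrees. Second, the right-hand side is therefore globally bounded and smooth, so $u(t)$ grows at most linearly and the solution never blows up in finite time. I would also record that $\sum_i K_i=2\pi\chi(S)$ by Gauss--Bonnet for the cone metric; this holds because the star-vertices carry no curvature by ($B_1$). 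Consequently $\sum_i u_i(t)$ is constant along the flow, and the flow preserves the scaling normalization.

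The implications would run as follows.

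\textbf{$(E_1)\Rightarrow(E_2)$.} If $r(t)\to r^*\in\mathbb{R}^N_{>0}$, then $du/dt\to K_{av}-K(r^*)$. If this limit were nonzero, $u$ would diverge linearly, a contradiction. Hence $K(r^*)=(K_{av},\ldots,K_{av})$.

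\textbf{$(E_2)\Leftrightarrow(E_3)$.} This follows directly from Theorem \ref{Bobenko-Springborn-2} by substituting $K_i=K_{av}=2\pi\chi(S)/N$. The condition $K_{av}<2\pi$ holds automatically. For a proper subset $A$ the inequality $\frac{2\pi\chi(S)|A|}{N}>2\pi|A|-2\sum_{\partial e\cap A\ne\emptyset}\Phi(e)$ is exactly $(E_3)$ after dividing by $2$. For $A=V$ equality holds automatically, via Euler's formula and ($B_1$). So $(E_3)$ should be read as a strict inequality for proper subsets, with equality allowed when $A=V$.

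\textbf{$(E_3)\Leftrightarrow(E_4)$.} This is a combinatorial identity. Let $S(A)$ be the subcomplex of $\mathcal{T}(\mathcal{D})$ spanned by the triangles incident to $A$, and let $Lk(A)$ be its boundary pairs $(e,\tau)$. Counting the triangles of $\mathcal{T}(\mathcal{D})$ incident to $A$, and applying Euler's formula to $S(A)$, should give
\[
\sum_{(e,\tau)\in Lk(A)}(\Phi(e)-\pi)+2\pi\chi(S(A))=2\pi|A|-2\sum_{\partial e\cap A\neq\emptyset}\Phi(e).
\]
I would verify this by regrouping the angle sums triangle by triangle, using $\pi-\Phi$ as the angle at the star-vertex together with ($B_1$). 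This is the same identity Ge--Hua--Zhou use in the hyperbolic case, and it is routine.

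\textbf{$(E_2)\Rightarrow(E_1)$.} This is the main step. The approach is variational. The form $\sum_i (K_i-K_{av})\,du_i$ is closed, because the Jacobian $\partial K_i/\partial u_j$ is symmetric; this is Bobenko--Springborn's variational principle. Hence it defines a potential $F(u)$, and the flow is the negative gradient flow of $F$. The potential $F$ is convex, and strictly convex on the hyperplane $\sum_i u_i=\mathrm{const}$, with Jacobian $L=\partial K/\partial u$ positive semidefinite and kernel spanned by $(1,\ldots,1)$. Since $u^*$ with $K(u^*)=K_{av}$ exists, $F$ restricted to the hyperplane has a critical point, which is therefore its unique minimum. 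A strictly convex function with a minimum is proper, so $F$ is proper on the hyperplane. Because $F$ decreases along the flow, $u(t)$ stays in a compact sublevel set. A Lyapunov argument then gives $u(t)\to u^*$. Exponential convergence follows from linearizing at $u^*$, where the linearized operator $-L$ has strictly negative eigenvalues on the invariant hyperplane.

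The obstacle I expect is the compactness claim: showing that $F$ extends convexly to all of $\mathbb{R}^N$ and is proper on the hyperplane. My first attempt would be the standard convex-analysis fact that a strictly convex $C^1$ function on $\mathbb{R}^{N-1}$ with a critical point has bounded sublevel sets.
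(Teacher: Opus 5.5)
The paper gives no proof of this theorem. It is quoted from Ge--Hua--Zhou \cite{GHZ-2} and used only as a black box: a flow solution that stays in a compact subset of $\mathbb{R}^N_{>0}$ converges. So your outline has to stand on its own. Its analytic half does. The combinatorial step $(E_3)\Leftrightarrow(E_4)$, however, fails as you set it up.

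\textbf{The gap.} The paper never defines $S(A)$ or $Lk(A)$, so you had to choose them. You take $S(A)$ to be the closed subcomplex of $\mathcal{T}(\mathcal{D})$ formed by the triangles incident to $A$, and $Lk(A)$ to be its boundary pairs. With that choice, the identity you call routine is false.

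Take $A=V\setminus\{w\}$ for a vertex $w$ with no loop. Every triangle $\triangle(v_iv_j\tau)$ has $v_i$ or $v_j$ in $A$, so $S(A)=S$ and $Lk(A)=\emptyset$. Your left-hand side is therefore $2\pi\chi(S)$. Summing ($B_1$) over all 2-cells gives $\sum_{e\in E}\Phi(e)=\pi(|E|-|F|)$, so the right-hand side is $2\pi(N-1)-2\pi(|E|-|F|)=2\pi\chi(S)-2\pi$.

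Under your definitions, $(E_4)$ for this $A$ reads $-2\pi\chi(S)/N>0$, which fails on every torus. Consider the square grid on a torus with $\Phi\equiv\pi/2$. There $r\equiv 1$ gives $K\equiv 0=K_{av}$, so $(E_2)$ holds while your $(E_4)$ fails. The theorem itself would be false under your reading.

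The identity also typically breaks when two vertices of $A$ share a neighbour outside $A$ but no common 2-cell. Your $S(A)$ is then pinched at that neighbour, and its Euler characteristic is too small.

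\textbf{The fix.} What the identity actually requires is $\chi(S(A))=|A|-|E_A|+|F_A|$, where $E_A$ and $F_A$ are the edges and 2-cells of $\mathcal{D}$ meeting $A$. One model is the union of the open stars in $\mathcal{D}$ of the vertices of $A$. Let $Lk(A)$ be the set of pairs $(e,\tau)$ with $\tau\in F_A$, $e\subset\partial\tau$ and $\partial e\cap A=\emptyset$. Write ($B_1$) as $\sum_{e\subset\partial\tau}(\Phi(e)-\pi)=-2\pi$. Every $e\in E_A$ borders two 2-cells, and both lie in $F_A$. Together these give
\[
\sum_{(e,\tau)\in Lk(A)}(\Phi(e)-\pi)=-2\pi|F_A|+2\pi|E_A|-2\sum_{e\in E_A}\Phi(e).
\]
Adding $2\pi\chi(S(A))$ yields exactly $2\pi|A|-2\sum_{e\in E_A}\Phi(e)$. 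As with $(E_3)$, $(E_4)$ must be restricted to $A\neq V$, because both sides are equal there.

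\textbf{The rest is sound.} Only small points need adding.
\begin{itemize}
\item In $(E_1)\Rightarrow(E_2)$, state that the limit lies in the open orthant. This follows from the conservation of $\prod_i r_i$, which you already recorded.
\item $(E_2)\Leftrightarrow(E_3)$ via Theorem \ref{Bobenko-Springborn-2} is correct, including your observation that $A=V$ must be excluded.
\item The symmetry of $\partial K/\partial u$ holds because $\theta_i^j$ depends only on $u_i-u_j$ and $\theta_i^j+\theta_j^i=\Phi_{ij}$. Hence $\partial K/\partial u$ is a graph Laplacian with positive weights, by Proposition \ref{derivative}.
\item The obstacle you anticipate does not arise. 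By Lemma \ref{two circles}, every $r\in\mathbb{R}^N_{>0}$ is admissible, so $F$ is already smooth and convex on all of $\mathbb{R}^N$.
\item The properness fact you cite is true. Convexity along rays from the minimizer gives linear growth outside a sphere.
\end{itemize}
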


\subsection{Main results}
\hspace{14pt}
The main results above of Rivin, Bobenko-Springborn and Ge-Hua-Zhou provide a wonderful criteria for the existence and uniqueness of ideal circle patterns. However, all criteria above (such as Rivin's criteria $A_1, A_2$, Bobenko-Springborn's criteria $B_1, B_2$ and Ge-Hua-Zhou's criteria $H_1, H_3-H_5; E_1, E_3-E_4$) are extremely difficult to verify, since the angle structure $\Phi$ and the combinatorial structure of $\mathcal{D}$ are coupled together and globally on $S$. It is natural to ask the following question.
\begin{question}\label{question-simpler}
 \item[]  Can one give more easily verifiable criteria than the criteria of Rivin, Bobenko-Springborn and Ge-Hua-Zhou to ensure the existence of the desired ideal circle patterns?
\end{question}
Our following ``character-type" theorems can give an answer to Question \ref{question-simpler}. First, inspired by the work in Ge-Jiang-Shen \cite{GJS}, Ge-Hua \cite{GH} and Feng-Ge-Hua \cite{FGH} on 3-dimensional geometric triangulations, we introduce the characters of ideal circle patterns. Let $S$ be an oriented closed surface with a cellular decomposition $\mathcal{D}$ and an exterior intersection function $\Phi(e)\in(0, \pi)$ defined on its edge set. Define the \emph{character} $\mathcal{L}(\mathcal{D},\Phi)_i$ at each vertex $i\in V$ by
\begin{equation*}
\mathcal{L}(\mathcal{D},\Phi)_i=\sum_{j\sim i}\Phi_{ij},
\end{equation*}
where $\Phi_{ij}$ denotes the value of $\Phi$ on the edge $e_{ij}\in E$, and $j\sim i$ denotes that the vertices $i$ and $j$ are adjacent.\par
Then we found simpler and more checkable criteria than criteria of Rivin, Bobenko-Springborn and Ge-Hua-Zhou for the existence of ideal circle patterns.
\begin{theorem}\label{thm-main-H}
Let $G$ be the 1-skeleton of a cellular decomposition $\mathcal{D}$ of a closed surface $S$ with a weight $\Phi\in(0, \pi)$. \\
(1). If the character $\mathcal{L}(\mathcal{D},\Phi)_i>2\pi$ at all vertices, then there
exists a constant curvature metric $\mu$ on $S$ that $(S, \mu)$ supports a ideal $G$-type circle pattern $\mathcal{P}$ with the intersection angle function given by $\Phi$. \\
(2). If the character $\mathcal{L}(\mathcal{D},\Phi)_i\leq2\pi$ at all vertices, then there exists no such ideal $G$-type circle patterns $\mathcal{P}$ with the intersection angle function given by $\Phi$.
\end{theorem}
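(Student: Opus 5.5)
The plan is to reduce both parts to the hyperbolic criterion ($H_3$) of Theorem \ref{Ge-Hua-Zhou-H}. That criterion is a condition on sums of $\Phi$ over edges meeting a vertex set $A$, and those sums compare directly with sums of characters. Fix a non-empty $A\subset V$. Each edge with both ends in $A$ appears twice in $\sum_{i\in A}\mathcal{L}(\mathcal{D},\Phi)_i=\sum_{i\in A}\sum_{j\sim i}\Phi_{ij}$, and each edge with exactly one end in $A$ appears once. Hence
\begin{equation*}
\sum_{e\in E,\ \partial e\cap A\neq\emptyset}\Phi(e)\;\geq\;\frac12\sum_{i\in A}\mathcal{L}(\mathcal{D},\Phi)_i ,
\end{equation*}
with equality when $A=V$. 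The case $A=V$ is worth recording separately. Summing ($B_1$) over all 2-cells, and using that each edge lies on exactly two 2-cells, gives
\begin{equation*}
2\sum_{e\in E}\Phi(e)=\sum_{\tau\in F}(m_\tau-2)\pi=2\pi(|E|-|F|).
\end{equation*}
Therefore $\frac12\sum_{i\in V}\mathcal{L}(\mathcal{D},\Phi)_i=\pi(|E|-|F|)=\pi|V|-\pi\chi(S)$.

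For part (1), assume $\mathcal{L}(\mathcal{D},\Phi)_i>2\pi$ for every $i$. The displayed inequality then gives $\sum_{\partial e\cap A\neq\emptyset}\Phi(e)>\pi|A|$ for every non-empty $A$, which is exactly ($H_3$). By Theorem \ref{Ge-Hua-Zhou-H}, condition ($H_2$) holds, so there is a radius vector $r^*$ with all vertex curvatures zero. By the construction recalled before Theorem \ref{Bobenko-Springborn-1}, $r^*$ yields a smooth hyperbolic metric $\mu$ on $S$ carrying the desired ideal circle pattern. As a consistency check, the case $A=V$ combined with the Euler identity above gives $\chi(S)<0$, so $g>1$, in agreement with ($H_5$). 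Uniqueness follows from the injectivity in Theorem \ref{Bobenko-Springborn-1}.

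For part (2), assume $\mathcal{L}(\mathcal{D},\Phi)_i\le 2\pi$ for every $i$. Taking $A=V$ in the equality case gives
\begin{equation*}
\sum_{e\in E}\Phi(e)=\tfrac12\sum_{i}\mathcal{L}(\mathcal{D},\Phi)_i\le\pi|V|,
\end{equation*}
so ($H_3$) fails. By Theorem \ref{Ge-Hua-Zhou-H}, the origin is not in the image of $\mathbf{K}$, and hence no hyperbolic ideal pattern realizes $(\mathcal{D},\Phi)$. Equivalently, $\chi(S)\ge 0$, so $g\le1$ and ($H_5$) fails.

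The step I expect to be the main obstacle is interpreting part (2) correctly, not any computation. The argument above only excludes patterns in the hyperbolic background. On the torus, the bound $\mathcal{L}(\mathcal{D},\Phi)_i\le2\pi$ together with the Euler identity $\sum_i\mathcal{L}(\mathcal{D},\Phi)_i=2\pi(|V|-\chi(S))=2\pi|V|$ forces $\mathcal{L}(\mathcal{D},\Phi)_i=2\pi$ for every $i$. Criterion ($E_3$) is then not violated for singleton sets: a single vertex has edge sum exactly $2\pi$, which exceeds the required $\pi$. The square lattice with $\Phi\equiv\pi/2$ on a flat torus is an explicit example where a Euclidean ideal pattern does exist. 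Similarly, Rivin's Theorem \ref{Rivin} gives spherical examples. So I would state and prove part (2) in the hyperbolic background geometry, i.e.\ as the nonexistence of a hyperbolic ideal circle pattern, which is the setting of the flow (\ref{equ-H}). I would also check that this is the reading intended by the theorem, rather than trying to prove a statement about all constant-curvature metrics that fails on the torus.
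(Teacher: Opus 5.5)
Your argument is correct, but for part (1) it takes a genuinely different route from the paper. The paper never checks $(H_3)$. It obtains Theorem~\ref{thm-main-H} from Theorem~\ref{thm-character-H}, whose existence part is proved dynamically. Along the flow (\ref{equ-H}) it applies a minimum principle to $\min_m r_m(t)$. Using the comparison principle (Theorem~\ref{comparison}) and $\theta_i^j(t,t)\to\Phi_{ij}/2$ as $t\to 0$, it shows that the vertex with the smallest radius has $K_i<0$ once that radius falls below a threshold. This gives the lower bound of Theorem~\ref{lower-H}. Together with Ge--Hua--Zhou's upper bound, it confines $r(t)$ to a compact subset of $\mathbb{R}^N_{>0}$, and their convergence results then produce a zero-curvature pattern.

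Your double counting $\sum_{\partial e\cap A\neq\emptyset}\Phi(e)\geq\frac12\sum_{i\in A}\mathcal{L}(\mathcal{D},\Phi)_i$ replaces all of this with a few lines on top of the static image characterization (Theorem~\ref{Bobenko-Springborn-1}, i.e.\ $(H_2)\Leftrightarrow(H_3)$). It also shows that the character hypothesis is simply a convenient sufficient condition for $(H_3)$. Applied to $\bar K\in\prod_i(2\pi-\mathcal{L}(\mathcal{D},\Phi)_i,2\pi)$, the same inequality verifies the condition of Theorem~\ref{Bobenko-Springborn-1}, which gives the first inclusion of Theorem~\ref{image-K-H} directly. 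What the paper's route buys instead is dynamical information: exponential convergence of the flow from every initial radius vector, i.e.\ an algorithm rather than bare existence.

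For part (2) the two proofs are the same Euler-characteristic computation. The paper uses Proposition~\ref{prop-euler-number} to get $\chi(S)\geq 0$ and then Gauss--Bonnet with positive hyperbolic area, which needs no image characterization. You phrase it as the failure of $(H_3)$ at $A=V$.

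Your reading of part (2) is the right one, and it matches what the paper actually proves: its nonexistence argument (part (2) of Theorem~\ref{thm-character-H}) only excludes hyperbolic ideal patterns. Your flat square torus with $\Phi\equiv\pi/2$ is a valid counterexample to the unrestricted statement. The cube decomposition of the round sphere with $\Phi\equiv\pi/2$ is another: it is the pattern of the regular ideal octahedron, with all characters equal to $3\pi/2$. It shows that even the strict inequality $\mathcal{L}(\mathcal{D},\Phi)_i<2\pi$ does not rule out non-hyperbolic ideal patterns.
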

The distinct feature of the characters $\mathcal{L}(\mathcal{D},\Phi)_i$ is their \emph{localness}, and they can be checked locally and separately at each vertex. However, criteria of Rivin, Bobenko-Springborn and Ge-Hua-Zhou (such as Rivin's criteria $A_1, A_2$, Bobenko-Springborn's criteria $B_1, B_2$ and Ge-Hua-Zhou's criteria $H_1, H_3-H_5; E_1, E_3-E_4$) are intertwined together, they are global and can not be verified separately.\par

More generally, we prove the following character theorems which cover Theorem \ref{thm-main-H} using the combinatorial Ricci flows (\ref{equ-H}) (\ref{equ-E}) with ideal circle patterns introduced by Ge-Hua-Zhou \cite{GHZ-2}.
\begin{theorem}\label{thm-character-H}
Let $S$ be an oriented closed surface with a cellular decomposition $\mathcal{D}$ and a weight $\Phi\in(0, \pi)$. Then for any initial circle pattern $r(0)\in\mathbb{R}_{>0}^{N}$, the combinatorial Ricci flow (\ref{equ-H}) has a unique solution $r(t)$ which exists for all time $t\geq0$. Moreover, we have\\
(1). If the character $\mathcal{L}(\mathcal{D},\Phi)_i>2\pi$ at each vertex, then there exists a unique hyperbolic ideal circle pattern (ideal circle pattern with zero curvature) $r_{ze}$, and the solution $r(t)$ to the combinatorial Ricci flow (\ref{equ-H}) converges exponentially fast to $r_{ze}$ when $t\rightarrow +\infty$. Moreover, the hyperbolic ideal circle pattern $r_{ze}$ determines a complete hyperbolic metric $\mu$ on $S$ such that $(S, \mu)$ supports a geometric decomposition isotopic to $\mathcal D$ so that the edges are geodesics.\\
(2). If the character $\mathcal{L}(\mathcal{D},\Phi)_i\leq2\pi$ at each vertex, then there exists no hyperbolic ideal circle patterns. Moreover, if the character $\mathcal{L}(\mathcal{D},\Phi)_i<2\pi$ at each vertex, the solution $r(t)$ to the combinatorial Ricci flow (\ref{equ-H}) converges exponentially fast to $0$ when $t\rightarrow +\infty$.
\end{theorem}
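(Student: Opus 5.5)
For the long-time existence and uniqueness of $r(t)$ I will rely on Ge-Hua-Zhou: Theorem \ref{Ge-Hua-Zhou-H} already gives that (\ref{equ-H}) has a solution for all $t\ge 0$. Uniqueness follows because the right-hand side is smooth in $r$. Both parts then reduce to comparing the local characters with the global condition ($H_3$).

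The basic counting identity is the following. Fix a non-empty $A\subset V$ and sum the characters over $A$. An edge with both endpoints in $A$ is counted twice, an edge with exactly one endpoint in $A$ is counted once, and all other edges do not appear. Hence
\begin{equation*}
\sum_{i\in A}\mathcal{L}(\mathcal{D},\Phi)_i\le 2\sum_{e\in E,\ \partial e\cap A\neq\emptyset}\Phi(e),
\end{equation*}
with equality when $A=V$.

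\textbf{Part (1).} If $\mathcal{L}(\mathcal{D},\Phi)_i>2\pi$ for all $i$, the identity gives $\sum_{\partial e\cap A\neq\emptyset}\Phi(e)\ge\frac12\sum_{i\in A}\mathcal{L}_i>\pi|A|$. This is exactly ($H_3$). Theorem \ref{Ge-Hua-Zhou-H} then gives ($H_1$) and ($H_2$), together with exponential convergence to the zero-curvature vector $r_{ze}$. Uniqueness of $r_{ze}$ follows from the injectivity of $\mathbf{K}$ in Theorem \ref{Bobenko-Springborn-1}. Since all curvatures vanish at both primal vertices and star-vertices, the glued metric $\mu(r_{ze})$ is a smooth hyperbolic metric on the closed surface $S$, hence complete. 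The images of the edges of $\mathcal{D}$ are geodesic segments, giving the required geodesic decomposition isotopic to $\mathcal{D}$.

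\textbf{Part (2), nonexistence.} Take $A=V$. Then $\sum_{e\in E}\Phi(e)=\frac12\sum_{i\in V}\mathcal{L}_i\le\pi|V|$, so ($H_3$) fails. By Theorem \ref{Ge-Hua-Zhou-H}, ($H_2$) also fails, so there is no radius vector with zero curvature and hence no hyperbolic ideal circle pattern.

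\textbf{Part (2), convergence to $0$.} I will use a maximum-principle argument, which I expect to be the main step. Consider the triangle $\triangle(v_iv_j\tau)$. Its sides at the intersection point have lengths $r_i$ and $r_j$, and the angle between them is $\pi-\Phi_{ij}$. The angle $\theta^j_i$ is opposite the side of length $r_j$. If $r_i\ge r_j$, the larger side is opposite the larger angle, so $\theta^j_i\le\theta^i_j$. The hyperbolic angle sum is less than $\pi$, which gives $\theta^j_i+\theta^i_j<\Phi_{ij}$. Together these yield $\theta^j_i<\Phi_{ij}/2$.

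Now let $r_M(t)=\max_i r_i(t)$ and let $i$ be a vertex attaining it. By (\ref{vertex-curvature}),
\begin{equation*}
K_i>2\pi-\sum_{j\sim i}\Phi_{ij}=2\pi-\mathcal{L}_i\ge\varepsilon:=\min_k\bigl(2\pi-\mathcal{L}_k\bigr)>0.
\end{equation*}
To handle the nonsmooth maximum, I will use the upper Dini derivative, or equivalently compare with each index realizing the maximum. This gives $D^+r_M\le-\varepsilon\sinh r_M\le-\varepsilon r_M$. Therefore $0<r_i(t)\le r_M(t)\le r_M(0)e^{-\varepsilon t}$ for every $i$, which is the claimed exponential convergence to $0$.
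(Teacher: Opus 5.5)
Your proposal is correct, but it takes a genuinely different route from the paper, most visibly in part (1). The paper never checks ($H_3$). Instead, it proves an a priori lower bound $r_i(t)\ge C$ along the flow (Theorem \ref{lower-H}) by a minimum principle, built on the comparison principle (Theorem \ref{comparison}) and the limit $\theta_i^j(t,t)\to\Phi_{ij}/2$ as $t\to 0$. It combines this with the upper bound of Theorem \ref{existence-H} to trap $r(t)$ in a compact subset of $\mathbb{R}^N_{>0}$, and then appeals to Ge--Hua--Zhou's convergence argument, Proposition \ref{convergence-H}, and Bobenko--Springborn injectivity. You instead observe by double counting that $\sum_{i\in A}\mathcal{L}(\mathcal{D},\Phi)_i\le 2\sum_{\partial e\cap A\neq\emptyset}\Phi(e)$. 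So the character hypothesis gives ($H_3$) directly, and Theorem \ref{Ge-Hua-Zhou-H} yields existence, convergence and the exponential rate at once. Your argument is far shorter and makes plain that part (1) is a corollary of the Bobenko--Springborn/Ge--Hua--Zhou characterization. The paper's argument instead produces the zero-curvature pattern as a limit of the flow, with explicit bounds on the radii, and it is the template the paper reuses for the prescribed flows. For nonexistence, your failure of ($H_3$) at $A=V$ is the paper's Gauss--Bonnet argument in disguise. By Proposition \ref{prop-euler-number}, $\sum_{e}\Phi(e)=\tfrac12\sum_i\mathcal{L}(\mathcal{D},\Phi)_i=\pi(N-\chi(S))$, so ($H_3$) for $A=V$ says exactly $\chi(S)<0$. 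The paper's version is more elementary, since it needs only $\sum_iK_i=2\pi\chi(S)+\mathrm{Area}(S)$ rather than the equivalence theorem. For the decay to $0$, you run the same maximum principle as Theorem \ref{thm-converge-zero}. However, you get the key estimate $\theta_i^j<\Phi_{ij}/2$ for $r_i\ge r_j$ from elementary triangle geometry: the angle $\pi-\Phi_{ij}$ at the intersection point, the hyperbolic angle deficit, and the fact that the larger side is opposite the larger angle. The paper instead uses Theorem \ref{comparison} together with the monotonicity in Proposition \ref{lim}. Your version removes the auxiliary $\epsilon_0$ and gives the clean rate $\varepsilon=\min_k\bigl(2\pi-\mathcal{L}(\mathcal{D},\Phi)_k\bigr)$.
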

\begin{theorem}\label{thm-character-E}
Let $S$ be an oriented closed surface with a cellular decomposition $\mathcal{D}$ and a weight $\Phi\in(0, \pi)$. Then for any initial circle pattern $r(0)\in\mathbb{R}_{>0}^{N}$, the combinatorial Ricci flow (\ref{equ-E}) has a unique solution $r(t)$ which exists for all time $t\geq0$. Moreover, we have\\
(1). If the character $\mathcal{L}(\mathcal{D},\Phi)_i\geq2\pi(1-\frac{\chi(S)}{N})$ at each vertex, then there exists a unique (up to scaling) constant ideal circle pattern (ideal circle pattern with constant curvature) $r_{av}$, and the solution $r(t)$ to the combinatorial Ricci flow (\ref{equ-E}) converges exponentially fast to $r_{av}$ when $t\rightarrow +\infty$. \\
(2). If the character $\mathcal{L}(\mathcal{D},\Phi)_i<2\pi(1-\frac{\chi(S)}{N})$ at each vertex, then there exists no constant ideal circle patterns. Moreover, the solution $r(t)$ to the combinatorial Ricci flow (\ref{equ-E}) converges exponentially fast to $0$ when $t\rightarrow +\infty$.
\end{theorem}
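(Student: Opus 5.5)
The plan is to reduce everything to Theorem \ref{Ge-Hua-Zhou-E} and Theorem \ref{Bobenko-Springborn-2}, using only an elementary double-counting identity for the character. I assume throughout the standing hypothesis ($B_1$) (\ref{B1}), which is needed for the construction in the first place. I also assume that the 1-skeleton $G$ of $\mathcal D$ is connected, which holds because $S$ is connected.

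Long-time existence and uniqueness of the solution to (\ref{equ-E}) are part of Theorem \ref{Ge-Hua-Zhou-E}, so only (1) and (2) need work.

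\textbf{Global identity.} Summing ($B_1$) over all 2-cells counts every edge exactly twice, so
\begin{equation*}
2\sum_{e\in E}\Phi(e)=\sum_{\tau\in F}(m_\tau-2)\pi=2\pi|E|-2\pi|F|.
\end{equation*}
Since every edge is also counted twice in $\sum_{i\in V}\mathcal L(\mathcal D,\Phi)_i$, we get
\begin{equation*}
\sum_{i\in V}\mathcal L(\mathcal D,\Phi)_i=2\pi(|E|-|F|)=2\pi(N-\chi(S)).
\end{equation*}
Thus the average character is exactly $2\pi(1-\chi(S)/N)$. This has two consequences:
\begin{itemize}
\item In (2), the hypothesis $\mathcal L_i<2\pi(1-\chi(S)/N)$ at every vertex contradicts this identity after summing over $V$. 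The hypothesis of (2) is therefore never satisfied, and (2) holds vacuously.
\item In (1), the hypothesis forces $\mathcal L_i=2\pi(1-\chi(S)/N)$ at every vertex. This is harmless; only the inequality is used below.
\end{itemize}

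\textbf{Part (1): verifying ($E_3$).} Fix a nonempty $A\subset V$ and write $E_A=\{e:\partial e\cap A\neq\emptyset\}$. Split $E_A$ into edges with both ends in $A$ (counted twice in $\sum_{i\in A}\mathcal L_i$) and edges with exactly one end in $A$ (counted once). This gives
\begin{equation*}
\sum_{e\in E_A}\Phi(e)=\frac12\sum_{i\in A}\mathcal L_i+\frac12\sum_{e\in\partial A}\Phi(e)\ \ge\ \pi|A|\Bigl(1-\frac{\chi(S)}{N}\Bigr)+\frac12\sum_{e\in\partial A}\Phi(e),
\end{equation*}
where $\partial A$ is the set of edges with exactly one endpoint in $A$.

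\begin{itemize}
\item If $A\neq V$, connectivity of $G$ gives $\partial A\neq\emptyset$. Since $\Phi>0$, the last sum is positive, which is exactly the strict inequality ($E_3$).
\item If $A=V$, the global identity gives $\sum_E\Phi=\pi(N-\chi(S))$, so ($E_3$) holds with equality. This matches the equality case $A=V$ in Theorem \ref{Bobenko-Springborn-2}.
\end{itemize}

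This case is the main point of care. Theorem \ref{Ge-Hua-Zhou-E} literally states strict inequality for every nonempty $A$, but the global identity shows that $A=V$ is always an equality. So I would read ($E_3$) as being required for proper subsets only, as Theorem \ref{Bobenko-Springborn-2} indicates.

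\textbf{Conclusion of (1).} To be safe, I would conclude via Theorem \ref{Bobenko-Springborn-2} directly. The vector $(K_{av},\dots,K_{av})$ has $K_{av}=2\pi\chi(S)/N<2\pi$. The subset inequality $\sum_{i\in A}K_{av}\ge 2\pi|A|-2\sum_{E_A}\Phi$ is exactly twice the estimate above, with equality only for $A=V$. Hence $(K_{av},\dots,K_{av})$ lies in the image of $\mathbf K$, which is ($E_2$).

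Theorem \ref{Ge-Hua-Zhou-E} then gives ($E_1$): the flow converges, and it does so exponentially fast to a radius vector $r_{av}$ with all curvatures equal to $K_{av}$. Uniqueness of $r_{av}$ up to scaling is the injectivity-up-to-scaling in Theorem \ref{Bobenko-Springborn-2}.
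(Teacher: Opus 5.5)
Your proof is correct, and it takes a genuinely different route from the paper's.

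\textbf{The paper's route.} The paper works with the flow directly. At a vertex realizing $\min_m r_m(t)$, the comparison principle (Theorem \ref{comparison}) and $\theta_i^j(t,t)\equiv\Phi_{ij}/2$ give $K_{av}-K_i\ge\mathcal{L}(\mathcal{D},\Phi)_i-2\pi(1-\chi(S)/N)\ge0$, so the minimal radius cannot decrease. Conservation of $\prod_i r_i$ (Theorem \ref{existence-E}) then bounds the radii from above, and compactness plus Ge--Hua--Zhou gives exponential convergence. Part (2) is the mirror estimate at a vertex realizing $\max_m r_m(t)$, which gives $r_M(t)\le r_M(0)e^{-C_0t}$.

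\textbf{Your route.} You use the identity $\sum_i\mathcal{L}(\mathcal{D},\Phi)_i=2\pi(N-\chi(S))$. This is the paper's Proposition \ref{prop-euler-number}, which it proves via Gauss--Bonnet at $r\equiv1$ and you prove by double counting with $(B_1)$. From it you see that the hypothesis of (2) can never hold, and that the hypothesis of (1) forces $\mathcal{L}(\mathcal{D},\Phi)_i=2\pi(1-\chi(S)/N)$ at every vertex. You then get $(E_2)$ from the Bobenko--Springborn image description and $(E_1)$ from Ge--Hua--Zhou.

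\textbf{Your degeneracy observation is right.} The paper's own Theorem \ref{existence-E} confirms it: since $\prod_i r_i(t)$ is constant, the conclusion $r(t)\to0$ in (2) is impossible. So the paper's flow argument for (2) only proves an implication whose premise is unsatisfiable. You are also right that $(E_3)$, as transcribed, fails at $A=V$, and routing through $(E_2)$ sidesteps this.

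\textbf{Trade-offs.} Your route is short and shows what the theorem actually says. The cost is reliance on two heavy black boxes: Theorem \ref{Bobenko-Springborn-2} and the $(E_1)\Leftrightarrow(E_2)$ part of Theorem \ref{Ge-Hua-Zhou-E}. The paper's maximum-principle estimates are elementary. They are also the same technique that carries real content in the hyperbolic results (Theorems \ref{thm-character-H} and \ref{prescribed-H}), where the character hypotheses are not pinned to the average.

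\textbf{Two small remarks.} First, you can shorten (1). Once equality is forced, Proposition \ref{prop-cha-r=1} shows that $r\equiv(1,\dots,1)$ already has $K_i=2\pi-\mathcal{L}(\mathcal{D},\Phi)_i=K_{av}$. That is an explicit witness for $(E_2)$, so the subset inequalities are unnecessary. Second, the claim $K_{av}<2\pi$ needs its one-line justification: $2\pi-K_{av}=\mathcal{L}_{av}>0$, which follows from your identity because $\Phi>0$.
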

Theorem \ref{thm-character-H} and Theorem \ref{thm-character-E} can be generalized to find circle patterns with other prescribed vertex curvatures by modifications of the flows. It is natural to consider the following prescribed circle pattern problem.
\begin{question}\label{question-prescribed}
 \item[] Given a prescribed vertex curvature $\bar{K}=(\bar{K_1}, \cdots, \bar{K_N})$, does there exist an ideal $\mathcal{D}$-type circle pattern $\bar{r}$ with curvature $K(\bar{r})=\bar{K}$ and exterior intersection function $\Phi$? And if it does, to what extent is the circle pattern unique?
\end{question}
Our following character-type theorems can give a comprehensive answer to Question \ref{question-prescribed}. First of all, we consider the prescribed combinatorial Ricci flow in the hyperbolic background geometry
\begin{equation}\label{eq-prescribe-H}
\frac{dr_i}{dt}=(\bar{K_i}-K_i)\sinh r_i
\end{equation}
and the prescribed combinatorial Ricci flow in the Euclidean background geometry
\begin{equation}\label{eq-prescribe-E}
\frac{dr_i}{dt}=(\bar{K_i}-K_i)r_i.
\end{equation}
We have the following character-type theorems on the prescribed circle pattern problem.
\begin{theorem}\label{prescribed-H}
Let $S$ be an oriented closed surface with a cellular decomposition $\mathcal{D}$ and a weight $\Phi\in(0, \pi)$. For any prescribed vertex curvature $\bar{K}=(\bar{K_1}, \cdots, \bar{K_N})\in (-\infty, 2\pi)^{N}$, the prescribed flow (\ref{eq-prescribe-H}) has a unique solution $r(t)$ which exists for all time $t\geq0$ for any initial circle pattern $r(0)\in\mathbb{R}_{>0}^{N}$. Moreover,\\
(1). If $2\pi-\mathcal{L}(\mathcal{D},\Phi)_i<\bar{K_i}<2\pi$ at each vertex $i\in V$, then the prescribed circle pattern problem has a unique solution, namely a unique ideal circle pattern $\bar{r}$ with curvature $K(\bar{r})=\bar{K}$, and the solution to the prescribed flow (\ref{eq-prescribe-H}) converges exponentially fast to $\bar{r}$ when $t\rightarrow +\infty$. \\
(2). If $\bar{K_i}\leq2\pi-\mathcal{L}(\mathcal{D},\Phi)_i$ at each vertex $i\in V$, then the prescribed circle pattern problem had no solutions and equivalently, there is not any circle pattern $\bar{r}$ with curvature $K(\bar{r})=\bar{K}$. Moreover, if $\bar{K_i}<2\pi-\mathcal{L}(\mathcal{D},\Phi)_i$ at each vertex $i\in V$, then the solution $r(t)$ to the prescribed flow (\ref{eq-prescribe-H}) converges exponentially fast to $0$ when $t\rightarrow +\infty$.
\end{theorem}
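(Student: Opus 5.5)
We work with the logarithmic-type coordinates of the hyperbolic background geometry and the local behaviour of the angles $\theta_i^j$. The flow (\ref{eq-prescribe-H}) is a gradient flow of a strictly convex potential in the variables $u_i=\ln\tanh(r_i/2)$. Indeed, $du_i/dt=\dot r_i/\sinh r_i=\bar K_i-K_i$, and the Jacobian $\partial K/\partial u$ is symmetric and positive definite by the variational principle of Bobenko-Springborn underlying Theorem \ref{Bobenko-Springborn-1}. Hence the Ricci potential $F(u)=\int\sum_i(K_i-\bar K_i)\,du_i$ is well defined and strictly convex on $u\in(-\infty,0)^N$, and the flow reads $\dot u=-\nabla F$.

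\emph{Long-time existence and uniqueness.} Since $K$ is smooth in $r$, the solution is locally unique. The curvatures are bounded: $0<\theta_i^j<\pi-\Phi_{ij}$, so $|K_i|\le C$. Hence $|\dot u_i|\le C$, and $u_i$ cannot reach $-\infty$ in finite time. To prevent $r_i\to\infty$ ($u_i\to 0$) in finite time, I use the limit $\theta_i^j\to 0$ as $r_i\to\infty$ with the other radii bounded. This gives $K_i\to 2\pi>\bar K_i$ near that boundary, so $\dot r_i<0$ there and $r_i$ stays bounded. Therefore the solution exists for all $t\ge0$.

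\emph{Part (1).} The key local lemma is that for any $\epsilon>0$ there is $\delta>0$ such that $r_i<\delta$ forces $\theta_i^j>\pi-\Phi_{ij}-\epsilon$, whatever $r_j$ is. Indeed, as $r_i\to0$, in the triangle with sides $r_i$, $r_j$, $l_{ij}$ and angle $\pi-\Phi_{ij}$ at the star-vertex, the angle at $i$ tends to $\pi-(\pi-\Phi_{ij})-\theta_j^i$ with $\theta_j^i\to 0$; the hyperbolic area of the triangle goes to zero as well. So $K_i\to 2\pi-2\sum_{j}(\pi-\Phi_{ij})$. I still need to check the precise normalization: in the Euclidean limit this quantity equals $2\pi-\mathcal L(\mathcal D,\Phi)_i$, possibly after the paper's convention relating $\theta$ to $\Phi$, which I will verify in Section 3. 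Thus if $\bar K_i>2\pi-\mathcal L_i$, small $r_i$ gives $K_i<\bar K_i$, so $\dot r_i>0$. Combined with the boundary behaviour at infinity, $r(t)$ stays in a compact subset of $\mathbb R^N_{>0}$. This compactness is the main obstacle. Given compactness, $F$ is bounded below along the flow and strictly convex, so $u(t)$ converges to the unique critical point $\bar r$ with $K(\bar r)=\bar K$. Exponential convergence follows because the linearization $-\partial K/\partial u$ at $\bar r$ is negative definite. Uniqueness of $\bar r$ follows from the injectivity in Theorem \ref{Bobenko-Springborn-1}.

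\emph{Part (2).} Use the reverse bound $\theta_i^j<\pi-\Phi_{ij}$ together with the area term, which gives $K_i>2\pi-\mathcal L_i$ for every $r$. If $\bar K_i\le 2\pi-\mathcal L_i$ for all $i$, then $K_i(r)=\bar K_i$ is impossible, so the problem has no solution. If the inequality is strict, choose $\eta>0$ with $\bar K_i-K_i<-\eta$. Then $\dot r_i\le-\eta\sinh r_i\le -\eta r_i$, so $r(t)\to 0$ exponentially fast.

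The hardest step is the uniform compactness estimate in part (1). It requires showing that the bound on the angle at a vertex with small radius holds uniformly in the neighbouring radii. I would handle this by direct analysis of the hyperbolic cosine law (\ref{length-H}), following Ge's techniques.
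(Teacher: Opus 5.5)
There is a genuine gap at the compactness step you flagged as hardest: the local lemma you propose to prove there is false, on two counts.

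First, the limit is misidentified. In $\triangle(v_iv_j\tau_k)$ the angle at $\tau_k$ is $\pi-\Phi_{ij}$. So as $r_i\to0$ with $r_j$ fixed, $\theta_i^j\to\Phi_{ij}$, not $\pi-\Phi_{ij}$; your own intermediate expression $\Phi_{ij}-\theta_j^i$ already says this. Likewise the a priori bound is $\theta_i^j<\Phi_{ij}$, not $\theta_i^j<\pi-\Phi_{ij}$.

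Second, and decisively, no lower bound on $\theta_i^j$ can hold for small $r_i$ ``whatever $r_j$ is''. For any fixed $r_i$, however small, $\theta_i^j\to0$ as $r_j\to0$, because $\theta_i^j$ is the angle opposite the side of length $r_j$. In the Euclidean case $\theta_i^j$ depends only on $r_j/r_i$ and sweeps out all of $(0,\Phi_{ij})$. Hence ``small $r_i$ forces $K_i<\bar K_i$'' fails at an arbitrary vertex. Even the non-uniform limit (neighbours fixed) gives $2\pi-2\mathcal{L}(\mathcal{D},\Phi)_i$, not the threshold in the statement.

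The missing idea is to use the estimate only at the vertex $i$ realizing $g(t)=\min_m r_m(t)$.
\begin{itemize}
\item There $r_i\le r_j$ for every $j\sim i$, so $\partial\theta_i^j/\partial r_j>0$ gives $\theta_i^j(r_i,r_j)\ge\theta_i^j(r_i,r_i)$ (Theorem \ref{comparison}).
\item Also $\theta_i^j(t,t)\to\Phi_{ij}/2$ as $t\to0$ (Proposition \ref{lim}).
\item With two triangles per edge, $K_i\le 2\pi-\mathcal{L}(\mathcal{D},\Phi)_i+2d_i\epsilon_0<\bar K_i$ once $g(t)$ is small. So $g'\ge0$ on $\{g<\tilde C\}$, and the Ge--Hua calculus lemma gives the uniform lower bound.
\end{itemize}
This equal-radius comparison at the extremal vertex is exactly where the threshold $2\pi-\mathcal{L}(\mathcal{D},\Phi)_i$ comes from. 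The same device at the maximal vertex, where $\theta_i^j(r_i,r_j)\le\theta_i^j(r_i,r_i)\to0$ as $r_i\to\infty$, gives the uniform upper bound. Your ``other radii bounded'' argument does not give this when all radii grow together.

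Part (2) has the same defect. $K_i>2\pi-\mathcal{L}(\mathcal{D},\Phi)_i$ does not hold for every $r$ at every vertex. At a vertex much smaller than its neighbours, $K_i$ is close to $2\pi-2\mathcal{L}(\mathcal{D},\Phi)_i$. So you cannot get $\bar K_i-K_i<-\eta$ at all vertices simultaneously, and the componentwise ODE comparison is unjustified.

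Nonexistence instead follows globally, as in the paper. By Proposition \ref{prop-euler-number}, $\sum_i\bar K_i\le 2\pi N-\sum_i\mathcal{L}(\mathcal{D},\Phi)_i=2\pi\chi(S)$. Gauss--Bonnet in the hyperbolic background gives $\sum_iK_i(r)=2\pi\chi(S)+\mathrm{Area}(S)>2\pi\chi(S)$ for every $r$. Alternatively, at the vertex where a putative $\bar r$ is maximal, $\theta_i^j(\bar r_i,\bar r_j)\le\theta_i^j(\bar r_i,\bar r_i)<\Phi_{ij}/2$ forces $K_i(\bar r)>2\pi-\mathcal{L}(\mathcal{D},\Phi)_i\ge\bar K_i$.

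For the exponential decay, apply this inequality only at the vertex realizing $r_M(t)=\max_m r_m(t)$. With $\eta=\min_i(2\pi-\mathcal{L}(\mathcal{D},\Phi)_i-\bar K_i)>0$ you get $r_M'\le-\eta\sinh r_M$ a.e., hence $\tanh(r_M(t)/2)\le\tanh(r_M(0)/2)e^{-\eta t}$.

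The rest of your outline is fine and matches what the paper takes from Ge--Hua--Zhou: the convex potential in $u_i=\ln\tanh(r_i/2)$, convergence and exponential rate once compactness holds, and uniqueness from Bobenko--Springborn injectivity.
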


\begin{theorem}\label{prescribed-E}
Let $S$ be an oriented closed surface with a cellular decomposition $\mathcal{D}$ and a weight $\Phi\in(0, \pi)$. For any prescribed vertex curvature $\bar{K}=(\bar{K_1}, \cdots, \bar{K_N})\in (-\infty, 2\pi)^{N}$ satisfying
\begin{equation*}
\sum_{i=1}^{N}\bar{K}_{i}=2\pi\chi(S),
\end{equation*}
the prescribed flow (\ref{eq-prescribe-E}) has a unique solution $r(t)$ which exists for all time $t\geq0$ for any initial circle pattern $r(0)\in\mathbb{R}_{>0}^{N}$. Moreover,\\
(1). If $2\pi-\mathcal{L}(\mathcal{D},\Phi)_i\leq\bar{K_i}<2\pi$ at each vertex $i\in V$, then the prescribed circle pattern problem has a unique (up to scaling) solution, namely a unique (up to scaling) ideal circle pattern $\bar{r}$ with curvature $K(\bar{r})=\bar{K}$, and the solution to the prescribed flow (\ref{eq-prescribe-E}) converges exponentially fast to $\bar{r}$ when $t\rightarrow +\infty$.\\
(2). If $\bar{K_i}<2\pi-\mathcal{L}(\mathcal{D},\Phi)_i$ at each vertex $i\in V$, then the prescribed circle pattern problem had no solutions and equivalently, there is not any circle pattern $\bar{r}$ with curvature $K(\bar{r})=\bar{K}$. Moreover, the solution $r(t)$ to the prescribed flow (\ref{eq-prescribe-E}) converges exponentially fast to $0$ when $t\rightarrow +\infty$.
\end{theorem}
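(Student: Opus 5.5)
We plan to prove Theorem \ref{prescribed-E} by reducing it to the Bobenko--Springborn description of the image (Theorem \ref{Bobenko-Springborn-2}), then analyzing the flow (\ref{eq-prescribe-E}) as a gradient flow of a convex functional. Throughout we use the coordinates $u_i=\ln r_i$. In these coordinates the flow reads $du_i/dt=\bar K_i-K_i$. By Euclidean Gauss--Bonnet and the constraint $\sum_i\bar K_i=2\pi\chi(S)$, the quantity $\sum_i u_i$ is conserved. The curvature $K$ depends smoothly on $u$ and is invariant under $u\mapsto u+c(1,\dots,1)$. The Jacobian $L=\partial(K_1,\dots,K_N)/\partial(u_1,\dots,u_N)$ is symmetric and positive semi-definite, with kernel spanned by $(1,\dots,1)$; this is standard from Bobenko--Springborn and Ge--Hua--Zhou. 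Hence $F(u)=\int^u\sum_i(K_i-\bar K_i)\,du_i$ is well defined and convex, and the flow is its negative gradient flow restricted to the hyperplane $\sum u_i=\mathrm{const}$.

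\textbf{Long-time existence.} Each $K_i$ is bounded: $K_i\in(2\pi-\sum_{j\sim i}2(\pi-\Phi_{ij}),\,2\pi)$, since each angle $\theta^j_i$ lies in $(0,\pi-\Phi_{ij})$. Therefore $|du_i/dt|$ is bounded, $u$ cannot blow up in finite time, and the solution exists for all $t\ge0$. Uniqueness follows from the local Lipschitz property.

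\textbf{Part (1).} The key is to check the condition of Theorem \ref{Bobenko-Springborn-2} for $\bar K$. For a nonempty proper subset $A\subsetneq V$, we need
\[
\sum_{i\in A}\bar K_i>2\pi|A|-2\sum_{\partial e\cap A\neq\emptyset}\Phi(e).
\]
Summing the hypothesis $\bar K_i\ge 2\pi-\mathcal L_i$ over $i\in A$ gives
\[
\sum_{i\in A}\bar K_i\ge 2\pi|A|-\sum_{i\in A}\sum_{j\sim i}\Phi_{ij}.
\]
The double sum counts edges inside $A$ twice and edges with exactly one endpoint in $A$ once. It is therefore at most $2\sum_{\partial e\cap A\ne\emptyset}\Phi(e)$, with strict inequality whenever some edge leaves $A$, which happens because $G$ is connected and $A\neq V$. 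So the strict inequality holds. For $A=V$, equality follows from Gauss--Bonnet and (B$_1$): summing (B$_1$) over all faces gives $\sum_e\Phi(e)=\pi(|V|-\chi(S))$, which matches $\sum_i\bar K_i=2\pi\chi(S)$. Hence $\bar K$ lies in the image, and there exists $\bar r$ with $K(\bar r)=\bar K$. This $\bar r$ is unique up to scaling by injectivity up to scaling. We also need to double-check the convention: equality cases and the condition $K_i<2\pi$ are given by hypothesis.

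\textbf{Convergence in part (1).} $F$ attains its minimum at $\bar u=\ln\bar r$, normalized to lie in the conserved hyperplane. It is strictly convex on that hyperplane, with $\nabla^2F=L$ positive definite there. A convex function with an isolated critical point is proper on the hyperplane, so the trajectory stays in a compact set. Since $F$ decreases along the flow, $u(t)\to\bar u$. Exponential rate follows from linearization: $d(u-\bar u)/dt\approx -L(u-\bar u)$, and the smallest nonzero eigenvalue of $L(\bar u)$ is positive.

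\textbf{Part (2).} Nonexistence holds because summing over $A=V$ gives $\sum_i\bar K_i<2\pi N-2\sum_e\Phi(e)=2\pi\chi(S)$, contradicting the constraint. So the hypothesis is actually incompatible with $\sum\bar K_i=2\pi\chi(S)$, and part (2) is vacuous. I would state this explicitly, noting the convergence claim holds vacuously.

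The main obstacle is the properness/compactness argument in part (1) and matching the boundary case of Theorem \ref{Bobenko-Springborn-2} carefully. I would handle this via the convexity of $F$ rather than via direct estimates.
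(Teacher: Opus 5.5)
Your proof is correct, but it takes a different route from the paper's.

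\textbf{How the paper argues.} The paper never uses the Bobenko--Springborn image theorem to produce $\bar r$. It obtains compactness of the trajectory from direct $C^0$ estimates. At a vertex $i$ realizing $\min_m r_m(t)$, the comparison principle (Theorem~\ref{comparison}) gives $\theta_i^j(r_i,r_j)\ge\theta_i^j(r_i,r_i)=\Phi_{ij}/2$. Hence $\bar K_i-K_i\ge\bar K_i-(2\pi-\mathcal L(\mathcal D,\Phi)_i)\ge0$, so $\min_m r_m$ cannot decrease (Lemma~\ref{calculus}). The upper bound then follows from conservation of $\prod_i r_i$ (Proposition~\ref{prescribed-inv}). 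Convergence of a precompact trajectory is quoted from Ge--Hua--Zhou, and Bobenko--Springborn is used only for injectivity up to scaling. So in the paper, existence of $\bar r$ is an output of the flow.

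\textbf{How you argue, and the trade-off.} You import the full image characterization (Theorem~\ref{Bobenko-Springborn-2}) to obtain $\bar r$ first. You then get compactness from properness of the strictly convex potential $F$ on the invariant hyperplane. This bypasses the comparison principle and gives exponential decay globally, e.g.\ $\frac{d}{dt}|u-\bar u|^2\le-2\lambda|u-\bar u|^2$ on the compact convex sublevel set. The price is reliance on a much stronger external existence theorem.

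\textbf{Part (2).} Here your treatment is better than the paper's, which only says ``similar to Theorem~\ref{thm-converge-E}''. Convergence to $0$ is in fact incompatible with the conservation of $\prod_i r_i$. Your summation correctly shows that the hypothesis of (2) can never hold, so the statement is vacuous.

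\textbf{Simplification of part (1).} The same summation also trivializes part (1). By Proposition~\ref{prop-euler-number}, $\sum_i(2\pi-\mathcal L(\mathcal D,\Phi)_i)=2\pi\chi(S)$. So the hypotheses $\bar K_i\ge2\pi-\mathcal L(\mathcal D,\Phi)_i$ and $\sum_i\bar K_i=2\pi\chi(S)$ force $\bar K_i=2\pi-\mathcal L(\mathcal D,\Phi)_i$ for every $i$. Then $\bar r=(1,\dots,1)$, up to scaling, is an explicit solution by Proposition~\ref{prop-cha-r=1}, and your subset-by-subset check of the Bobenko--Springborn inequalities is unnecessary.

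\textbf{A small slip.} In the Euclidean two-circle triangle the angle at the intersection point is $\pi-\Phi_{ij}$. Hence $\theta_i^j+\theta_j^i=\Phi_{ij}$ and $\theta_i^j\in(0,\Phi_{ij})$, not $(0,\pi-\Phi_{ij})$. Since you only use that $K$ is bounded, this does not affect the argument.
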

Theorem \ref{prescribed-H} and Theorem \ref{prescribed-E} provide a comprehensive answer to Question \ref{question-prescribed}, and further give a new observation of the image set $\mathbf{K}(\mathbb{R}^N_{>0})$ base our character-type criteria.
\begin{theorem}\label{image-K-H}.
Let $S$ be an oriented closed surface with a cellular decomposition $\mathcal{D}$ and a weight $\Phi\in(0, \pi)$. In the hyperbolic background geometry, for the image set $\mathbf{K}(\mathbb{R}^N_{>0})$ we have
\begin{equation}
\prod_{i\in V}\Big(2\pi-\mathcal{L}(\mathcal{D},\Phi)_i, \; 2\pi\Big)\subset \mathbf{K}\big(\mathbb{R}^N_{>0}\big)
\end{equation}
and
\begin{equation}
\prod_{i\in V}\Big(-\infty, \; 2\pi-\mathcal{L}(\mathcal{D},\Phi)_i\Big]\cap \mathbf{K}\big(\mathbb{R}^N_{>0}\big)=\emptyset.
\end{equation}
\end{theorem}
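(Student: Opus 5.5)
The plan is to deduce both statements from the description of $\mathbf{K}(\mathbb{R}^N_{>0})$ in Theorem \ref{Bobenko-Springborn-1}, with $\Phi$ satisfying ($B_1$) as in the standing setting. The one combinatorial input is a double-counting identity for the character. For a non-empty $A\subset V$, summing $\mathcal{L}(\mathcal{D},\Phi)_i=\sum_{j\sim i}\Phi_{ij}$ over $i\in A$ (the sum running over the edges incident to $i$) counts an edge with both endpoints in $A$ twice, an edge with exactly one endpoint in $A$ once, and any other edge not at all. Hence
\begin{equation*}
\sum_{i\in A}\mathcal{L}(\mathcal{D},\Phi)_i\;\leq\;2\sum_{e\in E,\ \partial e\cap A\neq\emptyset}\Phi(e),
\end{equation*}
and equality holds when $A=V$, because then every edge has both endpoints in $A$. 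If $\mathcal{D}$ has multiple edges or loops, I would read $j\sim i$ as running over incident edges, and the count is unchanged.

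For the first inclusion, take $\bar K\in\prod_{i\in V}(2\pi-\mathcal{L}(\mathcal{D},\Phi)_i,2\pi)$. The condition $\bar K_i<2\pi$ holds by assumption. For any non-empty $A\subset V$, summing the lower bounds and applying the inequality above gives
\begin{equation*}
\sum_{i\in A}\bar K_i\;>\;2\pi|A|-\sum_{i\in A}\mathcal{L}(\mathcal{D},\Phi)_i\;\geq\;2\pi|A|-2\sum_{\partial e\cap A\neq\emptyset}\Phi(e).
\end{equation*}
This is exactly the criterion of Theorem \ref{Bobenko-Springborn-1}, so $\bar K\in\mathbf{K}(\mathbb{R}^N_{>0})$.

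For the empty intersection, suppose some $r$ has $K=\mathbf{K}(r)$ with $K_i\leq 2\pi-\mathcal{L}(\mathcal{D},\Phi)_i$ for every $i$. Take $A=V$ and use the equality case:
\begin{equation*}
\sum_{i\in V}K_i\;\leq\;2\pi N-2\sum_{e\in E}\Phi(e).
\end{equation*}
Since $K$ lies in the image, Theorem \ref{Bobenko-Springborn-1} with $A=V$ requires the strict reverse inequality, which is a contradiction.

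Alternatively, both parts are the existence and nonexistence statements (1) and (2) of Theorem \ref{prescribed-H}, applied to a given $\bar K$. I would still present the direct argument, since it avoids the flow machinery. The only point needing care is the double-counting step, in particular the treatment of multiple edges, and confirming that the equality case at $A=V$ is what turns a non-strict hypothesis into a strict contradiction. I do not expect any real obstacle beyond that.
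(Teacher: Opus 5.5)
Your argument is correct. The alternative you mention at the end, reading both parts off Theorem \ref{prescribed-H}, is in fact the paper's route, so your main argument is genuinely different.

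In the paper, the inclusion of the open box comes from running the prescribed flow \eqref{eq-prescribe-H} and showing that $r(t)$ stays in a compact subset of $\mathbb{R}^N_{>0}$. The upper bound comes from $\bar K_i<2\pi$ and $\theta_i^j(t,t)\to 0$, the lower bound from $\bar K_i>2\pi-\mathcal{L}(\mathcal{D},\Phi)_i$ and $\theta_i^j(t,t)\to\Phi_{ij}/2$, each via the comparison principle (Theorem \ref{comparison}) at the extremal vertex. Ge--Hua--Zhou's results then give convergence to $\bar r$. The empty intersection comes from Gauss--Bonnet, $\sum_i K_i=2\pi\chi(S)+\mathrm{Area}(S)>2\pi\chi(S)$, together with $\sum_i(2\pi-\mathcal{L}(\mathcal{D},\Phi)_i)=2\pi\chi(S)$ (Proposition \ref{prop-euler-number}).

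Your nonexistence step is the same argument in different form. By $(B_1)$ one has $\sum_{e\in E}\Phi(e)=\pi(|E|-|F|)$, so the $A=V$ inequality of Theorem \ref{Bobenko-Springborn-1} is exactly $\sum_i K_i>2\pi\chi(S)$.

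The real difference is the existence half. Your inequality $\sum_{i\in A}\mathcal{L}(\mathcal{D},\Phi)_i\le 2\sum_{\partial e\cap A\neq\emptyset}\Phi(e)$ shows in a few lines that the box lies inside the Bobenko--Springborn region. This makes plain that the character condition is a sufficient condition distilled from their subset inequalities, i.e.\ a corollary of their image theorem. The paper's route is instead constructive: it produces $\bar r$ as the exponentially fast limit of the flow from any initial pattern. The price is the flow machinery and the compactness estimates in place of the Bobenko--Springborn image characterization.

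One small caution concerns loops, which do occur (for example, the one-vertex torus). A loop at $i$ must contribute $\Phi(e)$ once for each of its two ends to $\mathcal{L}(\mathcal{D},\Phi)_i$. That convention is what makes $\mathcal{L}(\mathcal{D},\Phi)_i$ the cone angle at $r\equiv1$ (Proposition \ref{prop-cha-r=1}). It also gives the equality at $A=V$ that your nonexistence step needs; reading ``incident edges'' as counting each loop once would break it.
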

\begin{theorem}\label{image-K-E}.
Let $S$ be an oriented closed surface with a cellular decomposition $\mathcal{D}$ and a weight $\Phi\in(0, \pi)$. In the Euclidean background geometry, for the image set $\mathbf{K}(\mathbb{R}^N_{>0})$ we have
\begin{equation}
\bigg\{x:\sum_{i\in V} x_i=2\pi\chi(S)\bigg\}\bigcap\prod_{i\in V}\Big[2\pi-\mathcal{L}(\mathcal{D},\Phi)_i, \; 2\pi\Big)\subset \mathbf{K}\big(\mathbb{R}^N_{>0}\big)
\end{equation}
and
\begin{equation}
\prod_{i\in V}\Big(-\infty, \; 2\pi-\mathcal{L}(\mathcal{D},\Phi)_i\Big)\cap \mathbf{K}\big(\mathbb{R}^N_{>0}\big)=\emptyset.
\end{equation}
\end{theorem}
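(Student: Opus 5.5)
The statement to prove is Theorem \ref{image-K-E}, and I plan to read it off from Theorem \ref{prescribed-E}, with a direct argument for the second inclusion.

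\emph{First inclusion.} Take $\bar K$ with $\sum_i \bar K_i = 2\pi\chi(S)$ and $2\pi-\mathcal{L}(\mathcal{D},\Phi)_i\le \bar K_i<2\pi$ for every $i$. Then $\bar K\in(-\infty,2\pi)^N$ and the sum condition of Theorem \ref{prescribed-E} holds. Part (1) of that theorem gives an ideal circle pattern $\bar r\in\mathbb{R}^N_{>0}$ with $K(\bar r)=\bar K$, so $\bar K\in\mathbf{K}(\mathbb{R}^N_{>0})$.

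\emph{Second inclusion.} Theorem \ref{prescribed-E}(2) assumes $\sum\bar K_i=2\pi\chi(S)$, so I avoid it and argue directly. Fix any $r\in\mathbb{R}^N_{>0}$. In each Euclidean triangle $\triangle(v_iv_j\tau_k)$ the angle at $\tau_k$ equals $\pi-\Phi_{ij}$, so
\[
\theta_i^j+\theta_j^i=\Phi_{ij}.
\]
Hence $\theta_i^j<\Phi_{ij}$, because $\theta_j^i>0$ in a nondegenerate triangle. Using \eqref{vertex-curvature},
\[
K_i=2\pi-2\sum_{j\sim i}\theta_i^j>2\pi-2\mathcal{L}(\mathcal{D},\Phi)_i .
\]
This falls short of the claimed bound by a factor of $2$. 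The fix is in the counting: each edge $e_{ij}$ lies in two triangles, one for each adjacent 2-cell, and the two angles at $v_i$ there are both $\theta_i^j$. So the cone angle is $2\sum\theta_i^j$ summed over edges. The correct comparison therefore uses the pair of triangles on edge $e_{ij}$, where the half-angle bound $\theta_i^j<\Phi_{ij}$ enters once per triangle.

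I expect reconciling the normalization of $\mathcal{L}$ to be the main obstacle. I would first check it against a simple case. If the character is meant to make the bound sharp, then it should satisfy $\sum_{j\sim i}\theta_i^j<\mathcal{L}_i/2$ with $\mathcal{L}$ counted per (edge, face) flag; I would recheck the definition against the proof of Theorem \ref{prescribed-E}(2) on that point.

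\emph{Fallback.} If the pointwise bound turns out to be only $K_i>2\pi-2\mathcal{L}_i$, I would instead use Theorem \ref{prescribed-E}(2) through the scaling structure. Suppose some $r$ had $K(r)=\bar K$ with $\bar K_i<2\pi-\mathcal{L}_i$ for all $i$. By Gauss--Bonnet, $\sum_i \bar K_i=2\pi\chi(S)$ automatically, so $\bar K$ satisfies the hypotheses of Theorem \ref{prescribed-E}. Then $r$ is a stationary solution of the flow (\ref{eq-prescribe-E}). But part (2) of that theorem says every solution converges to $0$, which a nonzero constant solution cannot do. This contradiction shows the intersection is empty.

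This Gauss--Bonnet route is clean and avoids the angle estimate entirely, so it is the argument I would actually write. The step that needs checking is that $\sum_i K_i(r)=2\pi\chi(S)$ holds for every Euclidean $r$. This follows because the cone metric is flat with no singularities at the star-vertices.
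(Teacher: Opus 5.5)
Your final argument is correct and is essentially the paper's, which reads Theorem \ref{image-K-E} off Theorem \ref{prescribed-E}; your Gauss--Bonnet check that every image point satisfies $\sum_i\bar K_i=2\pi\chi(S)$ before invoking part (2) makes explicit a step the paper leaves implicit, and you could cite that part's nonexistence clause directly instead of the stationary-solution argument. Note, however, that the flow theorem is not needed at all and that your abandoned pointwise idea cannot work: summing your own identity $\theta_i^j+\theta_j^i=\Phi_{ij}$ over all vertices gives $\sum_i K_i(r)=\sum_i\bigl(2\pi-\mathcal{L}(\mathcal{D},\Phi)_i\bigr)=2\pi\chi(S)$ for every $r$ (the last equality is Proposition \ref{prop-euler-number}), so no image point lies in $\prod_i\bigl(-\infty,2\pi-\mathcal{L}(\mathcal{D},\Phi)_i\bigr)$, the left-hand set of the first inclusion is just the single point $\mathbf{K}(1,\dots,1)$, and $K_i>2\pi-\mathcal{L}(\mathcal{D},\Phi)_i$ can never hold at every vertex simultaneously (by Theorem \ref{comparison} the non-strict version is guaranteed only at a vertex of maximal radius).
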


It seems that our character-type criteria are the first conditions totally different from criteria of Rivin, Bobenko-Springborn and Ge-Hua-Zhou. To approach our results, we shall use the combinatorial Ricci flows with ideal circle patterns introduced by Ge-Hua-Zhou \cite{GHZ-2} as a fundamental tool. We also borrow the techniques developed in \cite{GJS,GH,FGH} to control the flows. The main difficulty in the proof of our results is to establish the compactness of the solution to the flows. To circumvent the difficulty, we study thoroughly the geometry of the basic building blocks, i.e. two-circle configurations. Particularly, we establish some comparison principles for inner angles both at the longest and the shortest circle pattern components, which are key to establish the compactness of the solution to the flows.

This paper is organized as follows. In Section 2, we introduce the characters of ideal circle patterns and establish some comparison principles for interior angles of two-circle configurations. In Section 3 and 4, we study the combinatorial Ricci flow with ideal circle patterns by characters, and further prove Theorem \ref{thm-character-H} and Theorem \ref{thm-character-E} respectively. In the final Section 5, we prove Theorem \ref{prescribed-H} and Theorem \ref{prescribed-E} by further investigations into the prescribed combinatorial Ricci flow.\\[-8pt]

\noindent{\bf Acknowledgments.}
The first author is partially supported by National Natural Science Foundation of China (Grant No. 12301079), the Fundamental Research Funds for the Central Universities, and the Research Funds of Renmin University of China (Grant No. 24XNKJ01). The second author is partially supported by National Natural Science Foundation of China (Grant No. 12171480), Hunan Provincial Natural Science Foundation of China (Grant No. 2020JJ4658 and No. 2022JJ10059) and Scientific Research Program of NUDT (Grant No. JS2023-01 and No. IISF-C24001). The third author is partially supported by National Natural Science Foundation of China (Grant No. 11901553 and No. 12271016) and Fundamental Research Funds for the Central Universities. Finally, the authors would like to thank Huabin Ge and Ke Feng for valuable conversations and helpful suggestions.

\section{Preliminaries}
\subsection{Characters of circle patterns}
Let $S$ be an oriented closed surface with a cellular decomposition $\mathcal{D}$ and a weight $\Phi\in(0, \pi)$. Doing the procedure
for all 2-cells in subsection above, one obtains a triangulation $\mathcal{T}(\mathcal{D})$ of $S$. Then we define characters of circle patterns as follows.
\begin{definition}\label{definition-character}
Let $S$ be an oriented closed surface with a cellular decomposition $\mathcal{D}$ and a weight $\Phi\in(0, \pi)$. For any circle pattern based on $(S, \mathcal {D}, \Phi)$, define the character $\mathcal{L}(\mathcal D, \Phi)_i$ at each $i\in V$ by
\begin{equation}\label{def-character}
\mathcal{L}(\mathcal{D},\Phi)_i=\sum_{j\sim i}\Phi_{ij},
\end{equation}
where the sum runs over all vertices which are adjacent to the vertex $v_{i}$. The character of $(S, \mathcal {D}, \Phi)$ is defined as
$$\mathcal{L}(\mathcal D, \Phi)=(\mathcal{L}(\mathcal D, \Phi)_1,\cdots,\mathcal{L}(\mathcal D, \Phi)_N).$$
\end{definition}

It seems that the character is just defined for a weighted cellular decomposition $(\mathcal D, \Phi)$ on $S$, and has no relation to any circle patterns. However, both the combinatorial structure $\mathcal D$ and the angle structure $\Phi$ come from a circle pattern $\mathcal P$ on $S$. Recall the contact graph of a circle pattern $\mathcal P$ is a graph having a vertex for each circle, and having an edge between the vertices for every two intersected circles. $\mathcal P$ is called \emph{$(\mathcal D, \Phi)$-type} if the contact graph is isotopic to the 1-skeleton of $\mathcal D$, and for each edge $e\in E$ connecting two vertices $v_i$ and $v_j$, the two circles centered at $v_i$ and $v_j$ intersects with an exterior intersection angle $\Phi(e)$. Hence the weighted cellular decomposition $(\mathcal D, \Phi)$ records all the information of a $(\mathcal D, \Phi)$-type circle pattern $\mathcal P$, that is, the combinatorial structure given by $\mathcal D$ and the angle structure given by $\Phi$. Hence the character $(\mathcal{L}(\mathcal{D},\Phi)_i)_{i\in V}$ is indeed an invariant of all $(\mathcal D,\Phi)$-type circle patterns $\mathcal P$ on a closed surface $S$.

\begin{proposition}\label{prop-cha-r=1}
Given a weighted cellular decomposed surface $(S,\mathcal{D},\Phi)$. For each vertex $i\in V$, the character $\mathcal{L}(\mathcal{D},\Phi)_i$ is exactly the cone angle $a_i$ of a circle pattern $r=(1,\cdots,1)$ in the Euclidean background geometry.
\end{proposition}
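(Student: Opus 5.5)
The plan is a direct computation in the Euclidean triangles built from the two-circle configurations. By construction, the cone angle at $v_i$ is $a_i=2\sum_{j\sim i}\theta^j_i$, where $\theta^j_i$ is the inner angle at $v_i$ of the Euclidean triangle $\triangle(v_iv_j\tau_k)$. Each edge $e_{ij}$ contributes two such triangles, one for each of its two adjacent 2-cells, and both have the same angle at $v_i$; this accounts for the factor $2$. It therefore suffices to show that when $r_i=r_j=1$ we have $\theta^j_i=\Phi_{ij}/2$. Summing over $j\sim i$ then gives
\[
a_i=2\sum_{j\sim i}\frac{\Phi_{ij}}{2}=\sum_{j\sim i}\Phi_{ij}=\mathcal{L}(\mathcal{D},\Phi)_i
\]
by Definition \ref{definition-character}.

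For the key step, fix an edge $e_{ij}$ and consider the triangle with vertices the centers $v_i,v_j$ and an intersection point $p$ of the two circles. Its sides are $|v_ip|=r_i=1$ and $|v_jp|=r_j=1$, and by (\ref{length-E}),
\[
l_{ij}=\sqrt{2+2\cos\Phi_{ij}}=2\cos\frac{\Phi_{ij}}{2},
\]
which is valid since $\Phi_{ij}\in(0,\pi)$. The triangle is isosceles with apex $p$, so the angle at $p$ equals $\pi-\Phi_{ij}$. This is consistent with the star-vertex computation in the introduction, where the angle of each triangle at the intersection point is $\pi-\Phi(e)$. Because the two base angles are equal and all three angles sum to $\pi$, we get $2\theta^j_i=\Phi_{ij}$, i.e.\ $\theta^j_i=\Phi_{ij}/2$. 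Alternatively, the law of cosines gives $\cos\theta^j_i=l_{ij}/2=\cos(\Phi_{ij}/2)$ directly.

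The main point to be careful about is the bookkeeping in $a_i=2\sum_{j\sim i}\theta^j_i$. Each edge $[v_i,v_j]$ borders exactly two triangles of $\mathcal{T}(\mathcal{D})$ incident to $v_i$, one in each adjacent 2-cell. The angles of all triangles around $v_i$ then sum to exactly twice the sum over neighbours. If an edge bounds the same 2-cell on both sides, or there are multiple edges, one should sum over edges rather than over neighbouring vertices. The same count still works, and the character should be read as a sum over edges incident to $i$.
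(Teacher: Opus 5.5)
Your proof is correct and is essentially the paper's argument. At $r\equiv 1$ one has $l_{ij}=\sqrt{2+2\cos\Phi_{ij}}$, hence $\theta^j_i=\Phi_{ij}/2$; the paper only says this is easy to see, and your law-of-cosines computation $\cos\theta^j_i=l_{ij}/2=\cos(\Phi_{ij}/2)$ justifies it cleanly. Summing via $a_i=2\sum_{j\sim i}\theta^j_i$ then gives $a_i=\mathcal{L}(\mathcal{D},\Phi)_i$, and your extra bookkeeping for multiple edges, or edges bounding the same 2-cell on both sides, is a harmless refinement of the paper's tacit assumption that the 1-skeleton is a simple graph.
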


\begin{proof}
Assume $r_i=1$ for each vertex $i\in V$. Then by (\ref{length-E}), $l_{ij}=\sqrt{2+2\cos\Phi_{ij}}$ for each edge $e_{ij}\in E$. It's easy to see
$\theta_{i}^{j}=\Phi_{ij}/2$. Then by the definition of the character (\ref{def-character}), we obtain $\mathcal{L}(\mathcal{D},\Phi)_i=2\sum_{j\sim i}\theta_{i}^{j}=a_i$.
\end{proof}
Similar to the combinatorial Gauss-Bonnet formula (See Proposition 3.1 in \cite{CL} for details), there exists an equality for the total vertex curvature.
\begin{proposition}\label{GB}
For every circle pattern $r$ on $(S, \mathcal{D}, \Phi)$, then the total vertex curvature satisfies the following equality:
\begin{equation}\label{GB}
\sum_{i=1}^{N}K_{i}=2\pi\chi(X)-\lambda \text{Area}(X),
\end{equation}
where $\lambda=-1, 0, 1$ correspond three geometries, i.e., the hyperbolic background geometry $\mathbb{H}^{2}$, the Euclidean background geometry $\mathbb{E}^{2}$, and the spherical background geometry $\mathbb{S}^{2}$.
\end{proposition}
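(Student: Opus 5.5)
We prove the Gauss--Bonnet identity by summing angles over the triangles of $\mathcal{T}(\mathcal{D})$, in the standard way of Chow--Luo (Proposition 3.1 of \cite{CL}). Throughout, $X$ denotes $S$ with the cone metric $\mu(r)$ obtained by gluing the triangles $\triangle(v_iv_j\tau_k)$.

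\emph{Step 1: angle sum.} In each geometric triangle $\triangle$ with inner angles $\alpha,\beta,\gamma$, the Gauss--Bonnet theorem in constant curvature $\lambda$ gives
$$\alpha+\beta+\gamma=\pi+\lambda\,\text{Area}(\triangle).$$
This covers $\lambda=-1$ (hyperbolic defect), $\lambda=0$ (Euclidean) and $\lambda=1$ (spherical excess). Summing over all $F^{\diamond}$ triangles, the total angle equals $\pi|F^{\diamond}|+\lambda\,\text{Area}(X)$.

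\emph{Step 2: regroup by vertex.} The same total can be grouped around the vertices of $\mathcal{T}(\mathcal{D})$:
\begin{itemize}
\item at each star-vertex $\tau$, condition $(B_1)$ gives total angle $2\pi$, as computed in the introduction;
\item at each primal vertex $v_i$, the total angle is the cone angle $a_i=2\pi-K_i$.
\end{itemize}
Hence the total angle is $2\pi|F|+\sum_i(2\pi-K_i)=2\pi|V^{\diamond}|-\sum_i K_i$.

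\emph{Step 3: combinatorics.} Since $\mathcal{T}(\mathcal{D})$ is a triangulation of a closed surface, $3|F^{\diamond}|=2|E^{\diamond}|$, so $\pi|F^{\diamond}|=2\pi|E^{\diamond}|-2\pi|F^{\diamond}|$. Equating the two expressions for the total angle gives
$$\sum_i K_i=2\pi\big(|V^{\diamond}|-|E^{\diamond}|+|F^{\diamond}|\big)-\lambda\,\text{Area}(X)=2\pi\chi(S)-\lambda\,\text{Area}(X),$$
which is the claimed identity with $\chi(X)=\chi(S)$.

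The only point needing care is Step 2, namely that the star-vertex angles sum to $2\pi$. This uses $(B_1)$ together with the fact that the inner angle at $\tau_k$ in each two-circle triangle is $\pi-\Phi(e)$. That fact holds in every background geometry, because the intersection angle of the circles is the angle between the radii at the intersection point. Apart from this, the argument is a routine bookkeeping computation.
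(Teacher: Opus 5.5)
Your proof is correct and is essentially the argument the paper relies on. The paper gives no proof of its own; it defers to Chow--Luo's Proposition 3.1, which is exactly this angle-sum-plus-Euler-characteristic count, here carried out on $\mathcal{T}(\mathcal{D})$. The extra ingredient relative to Chow--Luo is that each star-vertex has cone angle $2\pi$, obtained from $(B_1)$ and the inner angle $\pi-\Phi(e)$ at $\tau_k$. You handle this correctly, and it is needed so that the identity involves only the primal curvatures $K_1,\dots,K_N$.
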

\begin{proposition}\label{prop-euler-number}
Denote the average character by $\mathcal{L}_{av}=\sum_{i\in V}\mathcal{L}(\mathcal{D},\Phi)_i/N$, then
$$\mathcal{L}_{av}=2\pi\Big(1-\frac{\chi(S)}{N}\Big).$$
Consequently, $\mathcal{L}_{av}>2\pi$ if $\chi(S)<0$, $\mathcal{L}_{av}=2\pi$ if $\chi(S)=0$ and $\mathcal{L}_{av}<2\pi$ if $\chi(S)>0$.
\end{proposition}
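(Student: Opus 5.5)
We need $\sum_i \mathcal{L}_i = 2\pi(N-\chi(S))$. Since each edge $e_{ij}$ is counted twice in the double sum,
\[
\sum_{i\in V}\mathcal{L}(\mathcal{D},\Phi)_i=\sum_{i\in V}\sum_{j\sim i}\Phi_{ij}=2\sum_{e\in E}\Phi(e).
\]
(If multiple edges join the same pair of vertices, the sum over $j\sim i$ is read as a sum over incident edges, so the identity still holds.) The plan is to evaluate $\sum_{e\in E}\Phi(e)$ with condition ($B_1$) (\ref{B1}) and then apply Euler's formula.

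Summing ($B_1$) over all 2-cells $\tau\in F$ gives
\[
\sum_{\tau\in F}\sum_{e\subset\partial\tau}\Phi(e)=\sum_{\tau\in F}(m_\tau-2)\pi ,
\]
where $m_\tau$ is the number of boundary edges of $\tau$. On a closed surface each edge lies on the boundary of exactly two 2-cells, counted with multiplicity. Hence the left side equals $2\sum_{e\in E}\Phi(e)$ and $\sum_\tau m_\tau=2|E|$. Therefore
\[
2\sum_{e\in E}\Phi(e)=2\pi|E|-2\pi|F| .
\]

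Euler's formula $|V|-|E|+|F|=\chi(S)$ gives $|E|-|F|=N-\chi(S)$. So $\sum_i\mathcal{L}_i=2\pi(N-\chi(S))$, and dividing by $N$ yields $\mathcal{L}_{av}=2\pi(1-\chi(S)/N)$. The three sign cases follow at once from the sign of $\chi(S)$.

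The only delicate point is the bookkeeping when an edge appears twice on the boundary of the same 2-cell (a non-simple boundary). Then ($B_1$) must be read with the boundary edges listed with multiplicity, consistent with the star-vertex cone-angle computation, and both double-counting identities still hold.
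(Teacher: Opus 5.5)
Your proof is correct, but it takes a different route from the paper's.

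The paper argues geometrically. By Proposition~\ref{prop-cha-r=1}, the Euclidean pattern with $r\equiv 1$ has $\theta_i^j=\Phi_{ij}/2$, so $2\pi-\mathcal{L}(\mathcal{D},\Phi)_i$ is exactly the vertex curvature $K_i$ of that pattern. The Euclidean total curvature formula $\sum_{i\in V}K_i=2\pi\chi(S)$ then gives the identity in one line.

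You instead argue purely combinatorially. You double-count $\sum_i\mathcal{L}(\mathcal{D},\Phi)_i=2\sum_{e}\Phi(e)$, sum ($B_1$) over all 2-cells, and apply Euler's formula directly to $\mathcal{D}$.

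The two arguments are the same bookkeeping at different levels. The Gauss--Bonnet formula for the cone metric on $\mathcal{T}(\mathcal{D})$ is itself angle-sum counting plus Euler's formula for $\mathcal{T}(\mathcal{D})$. Summing curvature only over primal vertices there is legitimate precisely because ($B_1$) makes every star-vertex flat.

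The paper's route is shorter given its machinery, and it interprets the character as a cone angle. Yours is self-contained and needs no geometric realization. It also shows explicitly that the identity follows from ($B_1$) plus Euler's formula alone, and fails without ($B_1$). The paper leaves that dependence implicit, since its statements only assume $\Phi\in(0,\pi)$.

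Your remark about reading ($B_1$) with multiplicity for non-simple cell boundaries is the right convention. It matches the star-vertex cone-angle computation.
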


\begin{proof}
Note $2\pi-\mathcal{L}(\mathcal{D},\Phi)_i$ is the vertex curvature $K_i$ of a particular ideal circle pattern $r\equiv1$ in the Euclidean background geometry by Proposition \ref{prop-cha-r=1}. The conclusion follows from the total curvature formula (\ref{GB}) in the Euclidean background geometry
\begin{equation}\label{GB-E}
\sum_{i\in V}K_i=2\pi\chi(S).
\end{equation}

\end{proof}

\subsection{Geometry of two-circle configurations}
\hspace{14pt}
In this subsection, we study thoroughly the geometry of the basic building blocks, i.e. two-circle configurations. For more details and illustration examples please refer to Section 2 and Fig.1 in \cite{GHZ-2}. \par
Let $S$ be an oriented closed surface with a cellular decomposition $\mathcal{D}$ which produces a triangulation $\mathcal{T}(\mathcal{D})$ of $S$ and a weight $\Phi: E\rightarrow(0, \pi)$.
\begin{lemma}[Lemma 2.1, \cite{GHZ-2}] \label{two circles}
Given $\Phi_k\in(0, \pi)$, any two positive numbers $r_i, r_j$, there exists a configuration of two intersecting circles in both the hyperbolic and Euclidean geometries, unique up to isometries, having radii $r_i, r_j$ and meeting in exterior intersection angle $\Phi_k$.
\end{lemma}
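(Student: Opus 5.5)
The plan is to reduce the statement to the existence and uniqueness of a triangle with two prescribed sides and a prescribed included angle, in both the Euclidean plane $\mathbb{E}^2$ and the hyperbolic plane $\mathbb{H}^2$. Two circles $C_i, C_j$ with centers $O_i, O_j$ and radii $r_i, r_j$ intersect at a point $P$. The exterior intersection angle $\Phi_k$ is the angle between the outward normals at $P$, that is, between the rays $O_iP$ and $O_jP$ pointing away from the centers. Hence the interior angle of the triangle $O_iO_jP$ at $P$ equals $\pi-\Phi_k$.

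So I will take the converse direction. Given $r_i, r_j>0$ and $\Phi_k\in(0,\pi)$, the quantity $\pi-\Phi_k$ also lies in $(0,\pi)$. I place a point $P$ and two geodesic rays from $P$ meeting at angle $\pi-\Phi_k$. On these rays I mark $O_i$ at distance $r_i$ and $O_j$ at distance $r_j$, and then draw the circles of radius $r_i$ about $O_i$ and $r_j$ about $O_j$.

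Both circles pass through $P$, so they intersect. Their tangents at $P$ are perpendicular to $PO_i$ and $PO_j$, so the angle between the circles at $P$ is determined by the angle between the radii. Taking the exterior convention (angle between outward normals), this angle is $\pi-(\pi-\Phi_k)=\Phi_k$.

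Because $\pi-\Phi_k\in(0,\pi)$, the triangle $O_iO_jP$ is nondegenerate. The circles are therefore not tangent, and they meet transversally in two points, symmetric with respect to the geodesic $O_iO_j$.

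The length $d=|O_iO_j|$ follows from the law of cosines. In the Euclidean case,
$$d^2=r_i^2+r_j^2-2r_ir_j\cos(\pi-\Phi_k)=r_i^2+r_j^2+2r_ir_j\cos\Phi_k,$$
and in the hyperbolic case,
$$\cosh d=\cosh r_i\cosh r_j+\sinh r_i\sinh r_j\cos\Phi_k.$$
These agree with (\ref{length-E}) and (\ref{length-H}).

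For uniqueness, suppose we have any configuration with the given data. Its triangle $O_iO_jP$ has two sides $r_i, r_j$ with included angle $\pi-\Phi_k$. By the SAS congruence theorem, which holds in both $\mathbb{E}^2$ and $\mathbb{H}^2$, the triangle is determined up to isometry. The pair of circles is determined by its centers and radii, so the configuration is unique up to isometry.

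The only point needing care is fixing the convention that the exterior intersection angle equals $\pi$ minus the angle $\angle O_iPO_j$, and checking that the intersection is genuine, i.e.\ $|r_i-r_j|<d<r_i+r_j$. This follows from the strict triangle inequality for the nondegenerate triangle, which uses $\Phi_k\in(0,\pi)$. I expect no real obstacle beyond this.
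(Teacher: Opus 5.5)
The paper gives no proof of this lemma; it quotes it from Ge--Hua--Zhou. Your construction is the standard elementary argument: build the triangle $O_iPO_j$ from sides $r_i,r_j$ with included angle $\pi-\Phi_k$, get uniqueness from SAS, and recover (\ref{length-E}) and (\ref{length-H}) by the law of cosines. Existence, non-tangency via the strict triangle inequality, and uniqueness are all fine. There is one wrong step, which comes out right only because two errors cancel.

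The outward normals of $C_i$ and $C_j$ at $P$ are the unit tangent vectors at $P$ of the geodesics from $O_i$ and $O_j$ through $P$, continued past $P$. They are the negatives of the unit vectors pointing from $P$ toward $O_i$ and $O_j$. Reversing both vectors does not change the angle between them, so the angle between the outward normals is $\angle O_iPO_j$ itself, not $\pi-\angle O_iPO_j$.

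Under your stated convention (exterior angle $=$ angle between outward normals) you would get $\angle O_iPO_j=\Phi_k$. That gives $d^2=r_i^2+r_j^2-2r_ir_j\cos\Phi_k$, contradicting (\ref{length-E}). Your later step ``$\pi-(\pi-\Phi_k)=\Phi_k$'' is therefore not the angle between the outward normals; it repeats the same error.

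The correct convention is forced by (\ref{length-E}), (\ref{length-H}) and by the cone-angle count $\sum(\pi-\Phi(e_i))=2\pi$ at star-vertices. It is that $\Phi_k$ is $\pi$ \emph{minus} the angle between the outward normals. Equivalently, it is the angle between the outward normal of one circle and the inward normal of the other, i.e. the exterior angle of the triangle $O_iPO_j$ at $P$. This is what makes $\Phi_k=0$ correspond to external tangency, whereas your stated convention would make $\Phi_k=0$ internal tangency.

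To fix the proof, replace the justification in your first paragraph with this convention and drop the ``$\pi-(\pi-\Phi_k)$'' computation; everything else goes through unchanged. It is also worth saying explicitly that reflection in the geodesic $O_iO_j$ swaps the two intersection points. The angle is then the same at both, so ``meeting in exterior intersection angle $\Phi_k$'' is well defined.
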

By Lemma \ref{two circles}, let $\triangle(v_{i}v_{j}\tau_{k})$ be the triangle whose vertices are the two centers and one of the intersection points of two circles. Let $\theta^{j}_{i}, \theta^{i}_{j}$ denote the two corresponding inner angles at the centers $i, j$. By the (hyperbolic) cosine law, we have

$$\cos\theta_{i}^{j}(r_{i}, r_{j})=\frac{l^{2}_{ij}+r^{2}_{i}-r^{2}_{j}}{2l_{ij}r_{i}} (\cos\theta_{i}^{j}(r_{i}, r_{j})=\frac{\cosh l_{ij}+\cosh r_{i}-\cosh r_{j}}{\sinh l_{ij}\sinh r_{i}}).$$
Ge-Hua-Zhou \cite{GHZ-2} established the following proposition (see Lemma 2.2. in \cite{GHZ-2}) on partial derivatives of the inner angle $\theta_{i}^{j}$.
\begin{proposition} \label{derivative}
For a a weighted cellular decomposed surface $(S,\mathcal{D},\Phi)$ whose weight satisfies $\Phi : E\rightarrow(0, \pi)$. In both the Euclidean and hyperbolic background geometries $\mathbb{E}^{2}$, $\mathbb{H}^{2}$,  then
(1). $\frac{\partial \theta_{i}^{j}}{\partial r_{i}}<0$;
(2). $\frac{\partial \theta_{i}^{j}}{\partial r_{j}}>0$, for any $j\sim i$.
\end{proposition}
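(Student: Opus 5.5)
Proposition \ref{derivative} is a statement about a single triangle $\triangle(v_iv_j\tau_k)$, so I would work entirely within one two-circle configuration, fix $\Phi=\Phi_{ij}\in(0,\pi)$, and handle the two geometries separately. The triangle has sides $r_i$, $r_j$, $l_{ij}$, with the angle at the intersection point equal to $\pi-\Phi$. It is determined by side--angle--side data: two sides $r_i,r_j$ enclosing the fixed angle $\pi-\Phi$. This is the key simplification, because it lets me avoid differentiating the law of cosines in $l_{ij}$ directly.

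\textbf{Euclidean case.} Put the intersection point at the origin, with the center $j$ at distance $r_j$ along the positive real axis and the center $i$ at distance $r_i$ along the ray at angle $\pi-\Phi$. By the law of sines, or equivalently the SAS formula for the angle opposite a side,
$$\cot\theta_i^j=\frac{r_i-r_j\cos(\pi-\Phi)}{r_j\sin(\pi-\Phi)}=\frac{r_i+r_j\cos\Phi}{r_j\sin\Phi}.$$
This follows from projecting onto the side $r_i$: the side $r_j$ has a component $r_j\sin\Phi$ perpendicular to it and a component $r_j\cos(\pi-\Phi)$ along it. Since $\sin\Phi>0$, the right-hand side is strictly increasing in $r_i$. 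It is strictly decreasing in $r_j$, because it equals $\frac{r_i}{r_j\sin\Phi}+\cot\Phi$. As $\cot$ is strictly decreasing on $(0,\pi)$, we get $\partial\theta_i^j/\partial r_i<0$ and $\partial\theta_i^j/\partial r_j>0$. Computing the derivatives explicitly gives the same strict signs, since the factor $\sin^2\theta_i^j$ is positive.

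\textbf{Hyperbolic case.} I would use the analogous hyperbolic SAS formula, i.e.\ the cotangent four-part formula:
$$\cot\theta_i^j=\frac{\sinh r_i\coth r_j-\cosh r_i\cos(\pi-\Phi)}{\sin(\pi-\Phi)}=\frac{\sinh r_i\coth r_j+\cosh r_i\cos\Phi}{\sin\Phi}.$$
The dependence on $r_j$ is only through $\coth r_j$, which is strictly decreasing. Hence $\cot\theta_i^j$ decreases in $r_j$, which gives $\partial\theta_i^j/\partial r_j>0$. For $r_i$,
$$\frac{\partial}{\partial r_i}\big(\sinh r_i\coth r_j+\cosh r_i\cos\Phi\big)=\cosh r_i\coth r_j+\sinh r_i\cos\Phi.$$
This quantity is positive: $\coth r_j>1$ and $|\cos\Phi|<1$ give $\cosh r_i\coth r_j>\cosh r_i>\sinh r_i\geq\sinh r_i|\cos\Phi|$. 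So $\cot\theta_i^j$ increases in $r_i$, which yields $\partial\theta_i^j/\partial r_i<0$.

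\textbf{Main obstacle.} The only step needing care is getting the four-part formula right, with the correct orientation of the included angle $\pi-\Phi$ and the correct side adjacent to $\theta_i^j$. As a check, I would take the Euclidean limit $r\to0$ and verify that the hyperbolic formula reduces to the Euclidean one. If that check fails, the fallback is to differentiate the hyperbolic cosine law displayed above, using \eqref{length-H} to express $\partial l_{ij}/\partial r_i$, and simplify. This route is messier but routine.
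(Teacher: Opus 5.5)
Your proof is correct, but it rests on a different key formula than the paper's. The paper sets $f=\cos\theta_i^j$, writes it through the law of cosines in terms of $r_i$, $r_j$ and $l_{ij}$, and differentiates directly. In $\mathbb{E}^2$ this gives $\partial f/\partial r_i=r_j^2\sin^2\Phi_{ij}/l_{ij}^3$ and $\partial f/\partial r_j=-r_ir_j\sin^2\Phi_{ij}/l_{ij}^3$. In $\mathbb{H}^2$ it gives $\partial f/\partial r_i=\sinh^2 r_j\cosh l_{ij}\sin^2\Phi_{ij}/\sinh^3 l_{ij}$ and $\partial f/\partial r_j=-\sinh r_i\sinh r_j\sin^2\Phi_{ij}/\sinh^3 l_{ij}$.

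You instead use the SAS data with the cotangent four-part formula, so $l_{ij}$ never enters. Your hyperbolic formula is right. Note, however, that the Euclidean-limit check alone would not pin it down: replacing $\sinh r_i\coth r_j$ by $\sinh r_i/\sinh r_j$ also passes it. Two checks do confirm it:
\begin{itemize}
\item At $\Phi_{ij}=\pi/2$ your formula becomes the standard right-triangle identity $\tan\theta_i^j=\tanh r_j/\sinh r_i$.
\item Multiply your $\partial\theta_i^j/\partial r_i$ and $\partial\theta_i^j/\partial r_j$ by $-\sin\theta_i^j$. Then use the law of sines $\sin\theta_i^j=\sinh r_j\sin\Phi_{ij}/\sinh l_{ij}$ (and, for $r_i$, the length formula \eqref{length-H}). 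This reproduces the paper's derivatives exactly, and likewise in $\mathbb{E}^2$.
\end{itemize}

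Your route has two advantages. The variables are nearly separated, so the signs are visible without computing $\partial l_{ij}/\partial r_i$. It also gives for free $\cot\theta_i^j(t,t)=\cosh t\,\cot(\Phi_{ij}/2)$ in $\mathbb{H}^2$ and $\theta_i^j(t,t)=\Phi_{ij}/2$ in $\mathbb{E}^2$. That is Proposition~\ref{lim}, which the paper states without proof. The paper's route, in turn, gives the derivatives in the $l_{ij}$-form in which they are usually used elsewhere.

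If you ever take your fallback, note that the hyperbolic cosine law displayed in the paper has a misprint. The numerator should be $\cosh l_{ij}\cosh r_i-\cosh r_j$, not $\cosh l_{ij}+\cosh r_i-\cosh r_j$.
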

\begin{proof}
Here we give a new proof by direct computations. Set $f(r_{i}, r_{j})=\cos\theta_{i}^{j}(r_{i}, r_{j})$.\par
We claim that $\frac{\partial f}{\partial r_{i}}>0$ and $\frac{\partial f}{\partial r_{j}}<0$ which are equivalent to $\frac{\partial \theta_{i}^{j}}{\partial r_{i}}<0$ and $\frac{\partial \theta_{i}^{j}}{\partial r_{j}}>0$ in both the Euclidean and hyperbolic background geometries $\mathbb{E}^{2}$, $\mathbb{H}^{2}$.\par

 In the Euclidean background geometry $\mathbb{E}^{2}$, due to
the edge length formula $l_{ij} =\sqrt{r_{i}^{2}+r_{j}^{2}+2r_{i}r_{j}\cos\Phi_{ij}}$, we have
\begin{eqnarray}\label{cos-angle-E}
f(r_{i}, r_{j})=\frac{r_{i}+r_{j}\cos\Phi_{ij}}{l_{ij}}
                                 =\frac{r_{i}+r_{j}\cos\Phi_{ij}}{\sqrt{r_{i}^{2}+r_{j}^{2}+2r_{i}r_{j}\cos\Phi_{ij}}}.
\end{eqnarray}
By taking the partial derivative with respect to $r_{i}$, we have
\begin{eqnarray}\label{partial de-i-E}
\frac{\partial f}{\partial r_{i}}=\frac{r^{2}_{j}\sin^{2}\Phi_{ij}}{l^{3}_{ij}}
                                 =\frac{r^{2}_{j}\sin^{2}\Phi_{ij}}{(r_{i}^{2}+r_{j}^{2}+2r_{i}r_{j}\cos\Phi_{ij})^{\frac{3}{2}}}>0,
\end{eqnarray}
and
\begin{eqnarray}\label{partial de-j-E}
\frac{\partial f}{\partial r_{j}}=-\frac{r_{i}r_{j}\sin^{2}\Phi_{ij}}{l^{3}_{ij}}
                                 =-\frac{r_{i}r_{j}\sin^{2}\Phi_{ij}}{(r_{i}^{2}+r_{j}^{2}+2r_{i}r_{j}\cos\Phi_{ij})^{\frac{3}{2}}}<0.
\end{eqnarray}
 In the case of hyperbolic background geometry $\mathbb{H}^{2}$, recall
the edge length formula $$l_{ij} =\cosh^{-1}(\cosh r_{i}\cosh r_{j}+\sinh r_{i}\sinh r_{j}\cos\Phi_{ij}),$$ by direct computations we have
\begin{eqnarray}\label{partial de-i-H}
\frac{\partial f}{\partial r_{i}}=\frac{\sinh^{2}r_{j}\cosh l_{ij}\sin^{2}\Phi_{ij}}{\sinh^{3}l_{ij}}>0,
\end{eqnarray}
and
\begin{eqnarray}\label{partial de-j-H}
\frac{\partial f}{\partial r_{j}}=-\frac{\sinh r_{i}\sinh r_{j}\sin^{2}\Phi_{ij}}{\sinh^{3}l_{ij}}<0.
\end{eqnarray}
Hence we prove the claim and complete the proof of Proposition \ref{derivative}.
\end{proof}

Furthermore, when $(r_{i}, r_{j})=(t, t)$, we can get
\begin{proposition} \label{lim}
For a a weighted cellular decomposed surface $(S,\mathcal{D},\Phi)$ whose weight satisfies $\Phi : E\rightarrow(0, \pi)$. In the case of the Euclidean background geometry, then for any $t>0$,
$$\theta_{i}^{j}(t, t)\equiv \frac{\Phi_{ij}}{2}.$$
In the case of the hyperbolic background geometry, the inner angle $\theta_{i}^{j}(t, t)$ is continuously differentiable and strictly decreasing function in $(0, +\infty)$. Moreover, the following limits hold.
$$\textmd{lim}_{t\rightarrow 0}\theta_{i}^{j}(t, t)=\frac{\Phi_{ij}}{2}, \textmd{lim}_{t\rightarrow+\infty}\theta_{i}^{j}(t, t)=0.$$
\end{proposition}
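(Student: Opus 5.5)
Set $r_i=r_j=t$ in the two cosine formulas displayed before Proposition \ref{derivative} and simplify.

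\emph{Euclidean case.} Formula (\ref{cos-angle-E}) with $r_i=r_j=t$ gives
$$\cos\theta_i^j(t,t)=\frac{t(1+\cos\Phi_{ij})}{t\sqrt{2+2\cos\Phi_{ij}}}=\sqrt{\frac{1+\cos\Phi_{ij}}{2}}=\cos\frac{\Phi_{ij}}{2}.$$
Since $\Phi_{ij}/2\in(0,\pi/2)$ and $\theta_i^j\in(0,\pi)$, this yields $\theta_i^j(t,t)\equiv\Phi_{ij}/2$, as in the proof of Proposition \ref{prop-cha-r=1}.

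\emph{Hyperbolic case, closed form.} Write $c=\cosh t$, $s=\sinh t$, $\phi=\Phi_{ij}$. Then $\cosh l=c^2+s^2\cos\phi$. The hyperbolic cosine law, applied at the vertex $i$ of the triangle with sides $l_{ij}=l$, $r_i=r_j=t$, gives
$$\cos\theta=\frac{\cosh t\cosh l-\cosh t}{\sinh t\,\sinh l}=\frac{c(\cosh l-1)}{s\sinh l}=\frac{c}{s}\tanh\frac{l}{2}.$$
Next compute $\cosh l-1=c^2-1+s^2\cos\phi=s^2(1+\cos\phi)=2s^2\cos^2(\phi/2)$. Hence $\sinh(l/2)=s\cos(\phi/2)$ and $\cosh(l/2)=\sqrt{1+s^2\cos^2(\phi/2)}$. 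Substituting,
$$\cos\theta_i^j(t,t)=\frac{\cosh t\,\cos(\phi/2)}{\sqrt{1+\sinh^2 t\cos^2(\phi/2)}}.$$
Squaring with $a=\cos^2(\phi/2)\in(0,1)$ gives
$$\cos^2\theta=\frac{(1+s^2)a}{1+s^2a}=1-\frac{1-a}{1+s^2a}.$$

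\emph{Monotonicity, smoothness and limits.} The right-hand side is smooth in $t$. It is strictly increasing in $s^2$, because $1-a>0$ and $a>0$, and $s^2=\sinh^2t$ is strictly increasing on $(0,\infty)$. Since $\theta\in(0,\pi/2)$, $\cos\theta$ is positive and $\theta=\arccos$ of a positive smooth function with values in $(0,1)$, so $\theta$ is $C^1$. It is strictly decreasing because $\cos^2\theta$ is strictly increasing. For the limits: as $t\to0$, $\cos^2\theta\to a$, so $\theta\to\phi/2$; as $t\to\infty$, $\cos^2\theta\to1$, so $\theta\to0$.

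The one step needing care is the half-angle simplification of $\cosh l-1$. It also confirms that $\cos\theta>0$, which justifies taking positive square roots.
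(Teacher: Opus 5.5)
Your argument is correct and complete. The paper gives no proof of Proposition \ref{lim} at all. The Euclidean identity is the same one-line computation called ``easy to see'' in the proof of Proposition \ref{prop-cha-r=1}, and the hyperbolic claims are only asserted. So your computation supplies the missing argument rather than reproducing one.

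\emph{A correction to your opening sentence.} The hyperbolic cosine formula displayed before Proposition \ref{derivative} contains a misprint: its numerator reads $\cosh l_{ij}+\cosh r_i-\cosh r_j$ instead of $\cosh l_{ij}\cosh r_i-\cosh r_j$. You used the correct hyperbolic law of cosines, which is right. But you should not say you are substituting into the displayed formula, because doing that literally gives $\coth l/\sinh t$, which blows up as $t\to 0$ and is not a cosine.

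\emph{A simplification.} Your closed form reduces further: from $\sin^2\theta=(1-a)/(1+s^2a)$ you get $\tan\theta_i^j(t,t)=\tan(\Phi_{ij}/2)/\cosh t$. From this, smoothness, strict monotonicity and both limits can be read off at once.

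\emph{An alternative route.} You could instead add the two explicit hyperbolic partial derivatives from the proof of Proposition \ref{derivative} at $r_i=r_j=t$. This gives $\frac{d}{dt}\cos\theta_i^j(t,t)=\sinh^2 t\,\sin^2\Phi_{ij}\,(\cosh l_{ij}-1)/\sinh^3 l_{ij}>0$. The sign information in Proposition \ref{derivative} alone would not suffice, since the two partials have opposite signs. This route uses machinery the paper already sets up and gives monotonicity, but the endpoint limits would still have to be evaluated separately. Your explicit formula delivers everything in one computation.
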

Combining Proposition \ref{derivative} and Proposition \ref{lim}, we can get the following comparison principles.
\begin{theorem} \label{comparison}
In both the Euclidean and hyperbolic background geometries $\mathbb{E}^{2}$, $\mathbb{H}^{2}$, for the inner angle $\theta_{i}^{j}(r_{i}, r_{j})$ at the vertex $i$, we have \\
(1). if $r_{i}\leq r_{j}$, then $\theta_{i}^{j}(r_{i}, r_{j})\geq\theta_{i}^{j}(r_{i}, r_{i}), \theta_{i}^{j}(r_{i}, r_{j})\geq \theta_{i}^{j}(r_{j}, r_{j});$\\
(2). if $r_{i}\geq r_{j}$, then $\theta_{i}^{j}(r_{i}, r_{j})\leq\theta_{i}^{j}(r_{i}, r_{i}), \theta_{i}^{j}(r_{i}, r_{j})\leq\theta_{i}^{j}(r_{j}, r_{j})$.
\end{theorem}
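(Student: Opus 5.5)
The comparison principle follows by moving one variable at a time from the diagonal point to $(r_i,r_j)$, using Proposition \ref{derivative} for the off-diagonal motion and Proposition \ref{lim} for the motion along the diagonal.

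\textbf{Case (1), $r_i\le r_j$.}

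For the first inequality, fix the first argument $r_i$ and move the second argument from $r_i$ up to $r_j$. By Proposition \ref{derivative}(2), $\partial\theta_i^j/\partial r_j>0$, so $s\mapsto\theta_i^j(r_i,s)$ is strictly increasing. Integrating from $s=r_i$ to $s=r_j\ge r_i$ gives
$$\theta_i^j(r_i,r_j)\ge\theta_i^j(r_i,r_i).$$

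For the second inequality, fix the second argument $r_j$ and move the first argument from $r_j$ down to $r_i$. By Proposition \ref{derivative}(1), $\partial\theta_i^j/\partial r_i<0$, so $s\mapsto\theta_i^j(s,r_j)$ is strictly decreasing. Since $r_i\le r_j$, this gives
$$\theta_i^j(r_i,r_j)\ge\theta_i^j(r_j,r_j).$$

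\textbf{Case (2), $r_i\ge r_j$.} This is the same argument with the inequalities reversed. Monotonicity in the second slot gives $\theta_i^j(r_i,r_j)\le\theta_i^j(r_i,r_i)$, because $r_j\le r_i$. Monotonicity in the first slot gives $\theta_i^j(r_i,r_j)\le\theta_i^j(r_j,r_j)$, because $r_i\ge r_j$.

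\textbf{Role of Proposition \ref{lim}.} The diagonal values can be compared with each other. In Euclidean geometry both equal $\Phi_{ij}/2$. In hyperbolic geometry, $t\mapsto\theta_i^j(t,t)$ is strictly decreasing, so for $r_i\le r_j$ we have $\theta_i^j(r_i,r_i)\ge\theta_i^j(r_j,r_j)$. Only one of the two bounds in each case is therefore sharp: the one at $r_i$ in case (1), and the one at $r_j$ in case (2). The statement as written needs only the two monotonicity facts.

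\textbf{Main obstacle.} The only point needing care is that the derivative signs in Proposition \ref{derivative} hold on the whole domain $\mathbb{R}^2_{>0}$. Since $\Phi_{ij}\in(0,\pi)$, the quantity $\sin^2\Phi_{ij}$ is strictly positive, and so are $l_{ij}$ and $\sinh l_{ij}$. Hence $\theta_i^j$ is $C^1$ on $\mathbb{R}^2_{>0}$, and the one-variable mean value theorem applies along each segment used above.
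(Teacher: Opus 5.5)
Your argument is correct and is essentially the paper's, which only asserts that the comparison follows by combining Proposition \ref{derivative} and Proposition \ref{lim}. Using both partial-derivative signs, as you do, makes Proposition \ref{lim} unnecessary; it would only enter if the second inequality were obtained by chaining $\theta_i^j(r_i,r_j)\ge\theta_i^j(r_i,r_i)\ge\theta_i^j(r_j,r_j)$.

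There is one slip in your side remark. In case (2) the stronger bound is again the one at $r_i$, not the one at $r_j$: since $r_i\ge r_j$ and the hyperbolic diagonal is decreasing, $\theta_i^j(r_i,r_i)\le\theta_i^j(r_j,r_j)$.
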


\section{Hyperbolic background geometry}
In this section we give the proof of Theorem \ref{thm-character-H}. Ge-Hua-Zhou \cite{GHZ-2} proved the longtime existence, uniqueness and a upper bound estimate for the solution $r(t)$ to the combinatorial Ricci flow (\ref{equ-H}) with ideal circle patterns.
\begin{theorem}[Lemma 3.3, \cite{GHZ-2}]\label{existence-H}
For any initial ideal circle pattern $r(0)\in\mathbb{R}_{>0}^{N}$, the combinatorial Ricci flow (\ref{equ-H}) has a unique solution $r(t)$ which exists for all time $t\geq0$. Moreover, each $r_i(t)$ is bounded from above in $[0, +\infty)$.
\end{theorem}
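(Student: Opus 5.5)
We treat (\ref{equ-H}) as an autonomous ODE system $\dot r_i=-K_i(r)\sinh r_i$ on the open set $\mathbb{R}^N_{>0}$. The right-hand side is smooth there: each $K_i$ is a finite sum of the smooth inner angles $\theta_i^j(r_i,r_j)$ coming from the cosine law. Standard Picard–Lindelöf theory therefore gives a unique local solution on a maximal interval $[0,T)$. To show $T=+\infty$ we prove that $r(t)$ cannot reach $\partial\mathbb{R}^N_{>0}$ or infinity in finite time. This reduces to two a priori bounds on every finite interval: a positive lower bound and a finite upper bound for each $r_i$.

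For the upper bound we use that the vertex curvature tends to $2\pi$ when $r_i$ is large. By Theorem \ref{comparison}(2), if $r_i\ge r_j$ then $\theta_i^j(r_i,r_j)\le\theta_i^j(r_i,r_i)$. Proposition \ref{lim} says $\theta_i^j(t,t)\to0$ as $t\to+\infty$.
\begin{itemize}
\item Let $i$ be an index achieving $\max_j r_j(t)$. Every neighbour then satisfies $r_j\le r_i$.
\item Hence $\theta_i^j\le\theta_i^j(r_i,r_i)$, which is small once $r_i$ is large.
\item So there is $C$ such that $r_i\ge C$ forces $K_i>0$, and then $\dot r_i<0$.
\end{itemize}
A standard maximum-principle argument on $\max_i r_i(t)$, which is Lipschitz in $t$ and handled via its Dini derivative, then gives $\max_i r_i(t)\le\max\{C,\max_i r_i(0)\}$ for all $t$. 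This bound is uniform in time, so it also yields the "moreover" statement.

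For the lower bound we only need a crude argument. Each $\theta_i^j\in(0,\pi)$, so $|K_i|\le 2\pi+2\pi\deg(i)=:M$. Therefore
\[
\Big|\frac{d}{dt}\ln\tanh\frac{r_i}{2}\Big| = \Big|\frac{\dot r_i}{\sinh r_i}\Big| = |K_i|\le M,
\]
which gives $\tanh(r_i(t)/2)\ge \tanh(r_i(0)/2)e^{-Mt}>0$ on any finite interval. Combined with the upper bound, the solution stays in a compact subset of $\mathbb{R}^N_{>0}$ on $[0,T)$ whenever $T<\infty$. That contradicts maximality, so $T=+\infty$.

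The main obstacle is the upper bound, specifically the comparison step: we must check that at the maximal vertex all neighbour angles are controlled by the diagonal angle $\theta_i^j(r_i,r_i)$, and that this tends to $0$ uniformly over the finitely many edges. Both facts come from Theorem \ref{comparison} and Proposition \ref{lim}, and finiteness of $E$ gives the required uniform threshold $C$.
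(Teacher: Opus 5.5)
Your proof is correct: local well-posedness, the uniform upper bound from the maximal vertex, the finite-time lower bound from $\bigl|\frac{d}{dt}\ln\tanh\frac{r_i}{2}\bigr|=|K_i|\le M$, and the escape-from-compacta argument all hold. The paper does not prove this statement itself (it only cites Lemma 3.3 of \cite{GHZ-2}); your upper-bound step (maximal vertex, Theorem \ref{comparison}(2), $\theta_i^j(t,t)\to 0$, Lemma \ref{calculus}) is exactly the argument the paper runs in Claim 1 of the proof of Theorem \ref{prescribed-H} with $\bar K=0$, and your continuation argument is the standard long-time existence step that the citation covers.
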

Furthermore, we can prove a lower bound estimate for inner angles along the solution $r(t)$ to the combinatorial Ricci flow (\ref{equ-H}) under the character assumption:
$\mathcal{L}(\mathcal{T},\Phi)_i>2\pi$ for all $i\in V$.
\begin{theorem} \label{lower-H}
Let $G$ be the 1-skeleton of a cellular decomposition $\mathcal{D}$ of a closed surface $S$ with a weight $\Phi: E\rightarrow (0, \pi)$. Assume the character
$$\mathcal{L}(\mathcal{D},\Phi)_i>2\pi$$
for all $i\in V$. Let $r(t)$ be a solution to the combinatorial Ricci flow (\ref{equ-H}). Then there exists a constant $C>0$ depending on the weighted cellular decomposition $(\mathcal D, \Phi)$ such that $r_{i}(t)\geq C$ for all vertices.
\end{theorem}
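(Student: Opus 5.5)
The plan is to run the standard maximum/minimum principle on the smallest radius, with the comparison principle Theorem \ref{comparison} and the small-radius limit in Proposition \ref{lim} as the main inputs. The target is to show that once the smallest radius falls below a threshold, its curvature is negative, so the flow (\ref{equ-H}) forces it to increase.

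First I would fix the threshold. By Proposition \ref{lim}, in the hyperbolic background the function $t\mapsto\theta_i^j(t,t)$ is continuous and strictly decreasing on $(0,+\infty)$, with limit $\Phi_{ij}/2$ as $t\to0$. Hence $2\sum_{j\sim i}\theta_i^j(t,t)$ increases to $\mathcal{L}(\mathcal{D},\Phi)_i$ as $t\downarrow 0$. Since $\mathcal{L}(\mathcal{D},\Phi)_i>2\pi$ and $V$ and $E$ are finite, there is $\epsilon>0$, depending only on $(\mathcal D,\Phi)$, such that
$$2\sum_{j\sim i}\theta_i^j(t,t)>2\pi \quad \text{for all } t\in(0,\epsilon] \text{ and all } i\in V.$$
Monotonicity is what lets this hold on the whole interval $(0,\epsilon]$.

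Next I would apply the comparison at a minimal vertex. Suppose at some time $t$ the vertex $i$ realizes $r_i(t)=\min_k r_k(t)\le\epsilon$. Then $r_i\le r_j$ for every $j\sim i$, so Theorem \ref{comparison}(1) gives $\theta_i^j(r_i,r_j)\ge\theta_i^j(r_i,r_i)$. Summing over $j\sim i$,
$$K_i=2\pi-2\sum_{j\sim i}\theta_i^j(r_i,r_j)\le 2\pi-2\sum_{j\sim i}\theta_i^j(r_i,r_i)<0.$$
Consequently $\frac{dr_i}{dt}=-K_i\sinh r_i>0$ at this vertex.

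Finally I would run the barrier argument. Set $C=\min\{\epsilon,\min_k r_k(0)\}$ and $m(t)=\min_k r_k(t)$, which is locally Lipschitz. Suppose $m$ ever drops below $C$. Take the last time $t_0$ before that moment at which $m(t_0)= C$. On a short interval after $t_0$ the minimum is attained by some vertex $i$ with $r_i\le\epsilon$. Because there are finitely many indices, the previous step gives strictly positive derivatives for each vertex attaining the minimum near $t_0$, contradicting the decrease of $m$. More cleanly, the lower Dini derivative of $m$ is positive whenever $m\le\epsilon$, so $m$ cannot decrease below $C$.

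The one delicate point is that $C$ also depends on $r(0)$, not only on $(\mathcal D,\Phi)$. I would read the statement as allowing this, since a flow started at very small radii cannot be bounded below by a constant independent of the initial data. The main technical point is the uniform choice of $\epsilon$, which rests on the monotonicity in Proposition \ref{lim}.
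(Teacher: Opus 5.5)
Your proposal is correct and follows essentially the paper's route. You take a vertex realizing $\min_k r_k$, combine Theorem \ref{comparison}(1) with the small-radius limit in Proposition \ref{lim} and $\mathcal{L}(\mathcal{D},\Phi)_i>2\pi$ to get $K_i<0$ there, and conclude that $\min_k r_k(t)\geq\min\{\epsilon,\min_k r_k(0)\}$. Your Dini-derivative/last-crossing argument just replaces the Ge--Hua calculus lemma used in the paper. You are also right that the constant depends on $r(0)$: the paper's own proof only yields $r_i(t)\geq\min\{g(0),C\}$, so the statement's ``depending on $(\mathcal D,\Phi)$'' must be read as you do.
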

We need the following calculus lemma introduced by Ge-Hua \cite{GH}. For a continuous function $f: [0, +\infty)\rightarrow \mathbb{R}$ and any $C\in \mathbb{R}$, the upper level set of $f$ at $C$ is defined as $\{f> C\}:=\{t\in [0, +\infty): f(t)> C\}$.
The lower level set $\{f< C\}$ is defined similarly.
\begin{lemma}[Lemma 3.9, \cite{GH}]
\label{calculus}
Let $f: [0, +\infty)\rightarrow \mathbb{R}$ be a locally Lipschitz function. Suppose that there is a constant $C_{1}$ such that
$f'(t)\leq0$,  for  a.e. $t\in \{f> C_{1}\}$, then $$f(t)\leq \max\{f(0), C_{1}\}, \forall t\in [0, +\infty).$$
Similarly, if $f'(t)\geq0$,  for  a.e. $t\in \{f< C_{1}\}$, then $$f(t)\geq \min\{f(0), C_{1}\}, \forall t\in [0, +\infty).$$
\end{lemma}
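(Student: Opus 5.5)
The statement immediately above is Lemma \ref{calculus}, the calculus lemma of Ge-Hua. I will argue by contradiction for the first assertion; the second follows by applying the first to $-f$ with constant $-C_{1}$.

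Set $M=\max\{f(0),C_{1}\}$ and suppose $f(t_{1})>M$ for some $t_{1}>0$. Since $f$ is continuous and $f(0)\le M<f(t_{1})$, the number
$$t_{0}=\sup\{t\in[0,t_{1}]: f(t)\le M\}$$
is well defined. It satisfies $f(t_{0})=M$, $t_{0}<t_{1}$, and $f(t)>M\ge C_{1}$ for every $t\in(t_{0},t_{1}]$. Hence the whole interval $(t_{0},t_{1}]$ lies in the upper level set $\{f>C_{1}\}$, and by hypothesis $f'\le 0$ almost everywhere there.

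On the compact interval $[t_{0},t_{1}]$, a locally Lipschitz function is Lipschitz and therefore absolutely continuous. The fundamental theorem of calculus for absolutely continuous functions then gives
$$f(t_{1})-f(t_{0})=\int_{t_{0}}^{t_{1}}f'(s)\,ds\le 0.$$
Thus $f(t_{1})\le f(t_{0})=M$, contradicting $f(t_{1})>M$. Therefore $f(t)\le M$ for all $t\ge0$.

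For the second assertion, put $g=-f$. If $f'\ge0$ a.e. on $\{f<C_{1}\}$, then $g'\le0$ a.e. on $\{g>-C_{1}\}$. The first part gives $g\le\max\{g(0),-C_{1}\}$, which is exactly $f\ge\min\{f(0),C_{1}\}$.

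The only delicate step is justifying the integral identity, that is, that $f$ is recovered from its a.e. derivative. This is exactly why the lemma assumes local Lipschitz continuity: mere continuity with a.e. derivative is not enough, as the Cantor function shows. Choosing $t_{0}$ as the last time $f\le M$ ensures the a.e. sign condition holds on the entire interval of integration.
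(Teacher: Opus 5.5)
Your proof is correct and complete. The last-exit time $t_0$ puts all of $(t_0,t_1]$ inside $\{f>C_1\}$, Lipschitz continuity on $[t_0,t_1]$ justifies $f(t_1)-f(t_0)=\int_{t_0}^{t_1}f'(s)\,ds\le 0$, and passing to $-f$ correctly handles the second assertion. The paper gives no proof of its own (it quotes the lemma from Ge--Hua), and your argument is the standard one for this lemma, so there is nothing to add.
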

\begin{proof}
Set $g(t): =\min_{m\in V}r_{m}(t)$. Then $g(t)$ is a locally Lipschitz function and for a.e. $t\in [0, +\infty)$, there exists a special vertex $i\in V$ depending on $t$, such that
\begin{equation}\label{equ-hyper}
g(t)=r_{i}(t), g'(t)=r_{i}'(t).
\end{equation}
For the particular vertex $i$, by Proposition \ref{lim}, $\theta_{i}(t\vec{1})$ is continuously differentiable and
$$\textmd{lim}_{t\rightarrow 0}\theta^{j}_{i}(t, t)=\frac{\Phi_{ij}}{2}.$$
Hence, for any positive constant $\epsilon_0$, small enough and is to be determined later, there exists a constant $C>0$ such that for any $t\leq C$,
$$\theta^{j}_{i}(t, t)\geq \frac{\Phi_{ij}}{2}-\epsilon_{0}.$$
We claim that $g'(t)\geq0$,  for  a.e. $t\in \{g< C\}.$ \par
Let $t\in [0, +\infty)$ and $i\in V$ satisfying (\ref{equ-hyper}) and $t\in \{g< C\}$. Then for any $j\sim i$,
$r_{i}(t)=\min\{r_{i}(t), r_{j}(t)\}< C$. By Theorem \ref{comparison},
$$\theta^{j}_{i}(r_{i}(t), r_{j}(t))\geq\theta^{j}_{i}(r_{i}(t), r_{i}(t))\geq\frac{\Phi_{ij}}{2}-\epsilon_{0}.$$
Under the assumption $\mathcal{L}(\mathcal{D},\Phi)_j>2\pi$ for each vertex $j\in V$, we choose the constant $\epsilon_{0}$ so that
$$0<\epsilon_{0}<\frac{\min_{j\in V}\left(\mathcal{L}(\mathcal{D},\Phi)_j-2\pi\right)}{2\max_{j\in V}d_j},$$
where $d_{j}$ denotes the number of $j\in V$ satisfying $j\sim i$.
Note $\epsilon_{0}$ depends only on the data of the weighted cellular decomposition $(\mathcal D, \Phi)$ on $S$. It follows
\begin{equation*}
\begin{aligned}
K_{i}(r(t))=&\;2\pi-2\sum_{j\sim i}\theta^{j}_{i}(r_{i}(t), r_{j}(t))\\
\leq &\;2\pi-2\sum_{j\sim i}(\frac{\Phi_{ij}}{2}-\epsilon_{0})\\
=&\;2\epsilon_{0}\cdot d_i-(\mathcal{L}(\mathcal{D},\Phi)_i-2\pi)\\
\leq&\;2\epsilon_{0}\cdot\max_{j\in V}d_j-\min_{j\in V}\big(\mathcal{L}(\mathcal{D},\Phi)_j-2\pi\big)\\
<&\;0.
\end{aligned}
\end{equation*}
By (\ref{equ-hyper}) and the combinatorial Ricci flow (\ref{equ-H}),
$$g'(t)=r_{i}'(t)=-K_{i}\sinh r_{i}\geq 0.$$
This proves the claim. Then the theorem follows from the claim and Lemma \ref{calculus}.
\end{proof}

Let $r(t)$ be a solution to the combinatorial Ricci flow (\ref{equ-H}). Then by Theorem \ref{existence-H} and Theorem \ref{lower-H}, there exist positive constants $C_{1}$ and $C_{2}$ depending on the weighted cellular decomposition $(\mathcal D, \Phi)$ such that
$$C_{1}\leq r_{i}(t)\leq C_{2}, \forall i\in V, t\in [0, +\infty).$$
This is equivalent to say that $r(t)$ lies in the compact region in $\mathbb{R}^{N}_{>0}$, which implies that $r(t)$ converges as $t\rightarrow +\infty$ (See the proof of Theorem 1.7 in \cite{GHZ-2} for more details). Then the existence part of Theorem \ref{thm-character-H} is obtained by the following proposition proved by Ge-Hua-Zhou \cite{GHZ-2}.
\begin{proposition}[Proposition 3. 5, \cite{GHZ-2}]\label{convergence-H}
If the solution $r(t)$ to the flow (\ref{equ-H}) converges to a vector $r^{*}\in\mathbb{R}_{>0}^{N}$, then $r^{*}$ gives an ideal circle pattern with all vertex curvatures equal to zero.
\end{proposition}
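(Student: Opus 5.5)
The proof combines continuity of the curvature map with a mean value argument along the flow (\ref{equ-H}); the geometric conclusion then follows from Thurston's construction recalled in the Introduction.

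First, the curvature map $\mathbf{K}$ is continuous, in fact smooth, on $\mathbb{R}^N_{>0}$. Each inner angle $\theta_i^j(r_i,r_j)$ is given by the hyperbolic cosine law composed with the length formula (\ref{length-H}). These are smooth functions of $(r_i,r_j)\in\mathbb{R}^2_{>0}$, as the derivative computations in Proposition \ref{derivative} already show. Since $r^*\in\mathbb{R}^N_{>0}$ and $r(t)\to r^*$, we get $K_i(r(t))\to K_i(r^*)$ and $\sinh r_i(t)\to\sinh r_i^*>0$ for every $i\in V$.

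Next, fix $i\in V$ and apply the mean value theorem on $[n,n+1]$ for $n\in\mathbb{N}$. There is $\xi_n\in(n,n+1)$ with
$$r_i(n+1)-r_i(n)=r_i'(\xi_n)=-K_i(r(\xi_n))\sinh r_i(\xi_n).$$
As $n\to\infty$, the left-hand side tends to $r_i^*-r_i^*=0$. The right-hand side tends to $-K_i(r^*)\sinh r_i^*$. Because $\sinh r_i^*\neq0$, this forces $K_i(r^*)=0$ for every $i$. Equivalently: if some $K_i(r^*)\neq0$, then for large $t$ the derivative $r_i'(t)$ keeps a fixed sign and is bounded away from $0$. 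Then $r_i(t)$ would diverge linearly, contradicting convergence.

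Finally, we check that $r^*$ really gives an ideal circle pattern. Glue the hyperbolic triangles $\triangle(v_iv_j\tau_k)$ built from $r^*$, as in the Introduction. By ($B_1$), the cone angle at every star-vertex is $2\pi$. By $K_i(r^*)=0$, the cone angle at every primal vertex is also $2\pi$. Hence the metric $\mu(r^*)$ is a smooth hyperbolic metric on $S$.

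Now draw the circle of radius $r^*_i$ centered at $v_i$. Circles of adjacent vertices meet at angle $\Phi_{ij}$, and their intersection points are exactly the star-vertices. Around each 2-cell $\tau$, the disks cover every triangle except the star-vertex $\tau$ itself. So each interstice is a single point, and these points are in bijection with $F$. This gives the desired ideal $\mathcal{D}$-type pattern.

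The only delicate point is the last step: confirming that the disks exactly fill each triangle up to the star-vertex. This uses that each triangle has sides $r_i^*$, $r_j^*$, $l_{ij}$ with the star-vertex on both circles, so that step is routine.
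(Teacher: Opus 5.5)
Your argument is correct and matches the paper's treatment. The paper gives no proof of its own but cites Ge--Hua--Zhou. The standard proof of that result is your mean-value step $r_i(n+1)-r_i(n)=-K_i(r(\xi_n))\sinh r_i(\xi_n)\to 0$, which forces $K(r^*)=0$ because $\sinh r_i^*>0$. It is then followed by Thurston's construction, as recalled in the Introduction, to turn a zero-curvature radius vector into an ideal pattern.

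One caution on the step you call routine. The per-triangle covering only shows that the uncovered set is contained in the set of star-vertices. To get interstices in bijection with $F$, you also need that no star-vertex $\tau$ lies in the open disk $\mathbb{D}_m$ of a vertex $m$ not on $\tau$. That is a global empty-disk (Delaunay-type) property needing a separate local-to-global argument, which the paper likewise takes for granted.
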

Furthermore, by Theorem 1.7 of \cite{GHZ-2}, if the combinatorial Ricci flow (\ref{equ-H}) converges, then it converges exponentially fast. The uniqueness part of Theorem \ref{thm-character-H} is a consequence of the Bobenko-Springborn theorem \cite{BS} which states that the curvature map $r\mapsto K$ is injective. To sum up, we show that there exists a unique hyperbolic ideal circle pattern $r_{ze}$ (ideal circle pattern with zero curvature), and the solution $r(t)$ to combinatorial Ricci flow (\ref{equ-H}) converges exponentially fast to $r_{ze}$ for any initial ideal circle pattern $r(0)\in \mathbb{R}^{N}_{>0}$. \par
Finally, we prove the nonexistence part of Theorem \ref{thm-character-H}.
\begin{proof}
Assume $\mathcal{L}(\mathcal{D},\Phi)_i\leq2\pi$ at each vertex $i\in V$. Hence the average character $\mathcal{L}_{av}\leq2\pi$. By Proposition \ref{prop-euler-number} we get $\chi(S)\geq0$. The total curvature formula (\ref{GB}) in the hyperbolic background says:
$$\sum_{i\in V}K_i=2\pi\chi(S)+\text{Area}(S).$$
Hence there is no ideal circle pattern $r$ with curvature zero. We complete the proof of the nonexistence part of Theorem \ref{thm-character-H}.
\end{proof}

To complete the proof of Theorem \ref{thm-character-H}, we can even show more, that is, any initial ideal circle pattern shrinks to a point along the combinatorial Ricci flow (\ref{equ-H}) if $\mathcal{L}(\mathcal{D},\Phi)_i<2\pi$ for all vertices. Actually, we have
\begin{theorem}\label{thm-converge-zero}
Let $G$ be the 1-skeleton of a cellular decomposition $\mathcal{D}$ of a closed surface $S$ with a weight $\Phi: E\rightarrow (0, \pi)$. If the character $\mathcal{L}(\mathcal{D},\Phi)_i<2\pi$ at each vertex, then the solution $r(t)$ to the combinatorial Ricci flow (\ref{equ-H}) converges exponentially fast to $0$ when $t$ tends to $+\infty$.
\end{theorem}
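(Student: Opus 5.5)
We track the maximal radius, mirroring the proof of Theorem \ref{lower-H} but with the roles of minimum and maximum exchanged. Set $h(t):=\max_{m\in V}r_m(t)$. This is locally Lipschitz, and for a.e. $t$ there is a vertex $i$ (depending on $t$) with $h(t)=r_i(t)$ and $h'(t)=r_i'(t)$. For every $j\sim i$ we then have $r_i\geq r_j$. Theorem \ref{comparison}(2) gives
$$\theta_i^j(r_i,r_j)\leq \theta_i^j(r_i,r_i).$$
By Proposition \ref{lim}, $\theta_i^j(t,t)$ is strictly decreasing with limit $\Phi_{ij}/2$ as $t\to0$, so $\theta_i^j(t,t)<\Phi_{ij}/2$ for all $t>0$. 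Consequently
$$K_i\geq 2\pi-2\sum_{j\sim i}\theta_i^j(r_i,r_i)>2\pi-\mathcal{L}(\mathcal{D},\Phi)_i\geq \delta:=\min_{j\in V}\bigl(2\pi-\mathcal{L}(\mathcal{D},\Phi)_j\bigr)>0.$$
This lower bound is uniform: it holds for every value of $h$, not only for small or large $h$.

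From the flow (\ref{equ-H}) we get $h'(t)=-K_i\sinh h\leq -\delta\sinh h\leq -\delta h$ for a.e. $t$. Integrating this differential inequality for the locally Lipschitz function $\log h$ gives
$$h(t)\leq h(0)e^{-\delta t}.$$
Because $0<r_m(t)\leq h(t)$ for every vertex $m$, each $r_m(t)$ tends to $0$ exponentially fast with the rate $\delta$, which depends only on $(\mathcal D,\Phi)$. Long-time existence, which this argument needs, is provided by Theorem \ref{existence-H}.

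The main point to justify is the strict inequality $\theta_i^j(t,t)<\Phi_{ij}/2$ for all $t>0$, which is needed so that the bound on $K_i$ holds for all $h$. It follows from the strict monotonicity in Proposition \ref{lim}. If one prefers not to rely on strictness, the weak bound $\theta_i^j(t,t)\leq\Phi_{ij}/2$ already suffices: it gives $K_i\geq 2\pi-\mathcal{L}(\mathcal{D},\Phi)_i\geq\delta$, and strictness is used nowhere else. The remaining steps, namely the a.e. differentiability of the max function and the Grönwall-type integration, are routine.
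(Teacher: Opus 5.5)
Your proposal is correct and follows essentially the paper's proof. You track the maximal radius, use the comparison $\theta_i^j(r_i,r_j)\le\theta_i^j(r_i,r_i)$ together with Proposition \ref{lim} to get a uniform positive lower bound on $K_i$ at the maximizing vertex, and integrate the resulting differential inequality. Your version is slightly cleaner but differs only cosmetically: you use $\theta_i^j(t,t)\le\Phi_{ij}/2$ directly instead of the paper's unnecessary $\epsilon_0$ device, which gives the better rate $\delta$ in place of $C_0$, and you use $\sinh h\ge h$ rather than integrating $(\ln\tanh\frac{h}{2})'\le -C_0$.
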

\begin{proof}
We claim that if the character $\mathcal{L}(\mathcal{D},\Phi)_i<2\pi$ at each vertex, then for any ideal circle pattern $r$ there exists a vertex $i$ such that
$$K_{i}\geq C_{0},$$ where $C_{0}=C_{0}(\mathcal{D},\Phi)>0$ is a sufficiently small positive constant depending on the weighted cellular decomposition $(\mathcal D, \Phi)$.\par
Let $i$ be the vertex such that $r_{i}=\max_{j\in V}r_{j}$. For any $j\sim i$, we have $r_{i}=\max\{r_{i}, r_{j}\}$. By Theorem \ref{comparison},
\begin{equation}\label{com-H}
\theta^{j}_{i}(r_{i}, r_{j})\leq \theta^{j}_{i}(r_{i}, r_{i}).
\end{equation}
If the character $\mathcal{L}(\mathcal{T},\Phi)_{j}<2\pi$ for each vertex $j\in V$, we choose the constant $\epsilon_{0}$ depends only on the data of the weighted cellular decomposition $(\mathcal D, \Phi)$ on $S$ so that
$$0<\epsilon_{0}<\frac{\min_{j\in V}\left(2\pi-\mathcal{L}(\mathcal{D},\Phi)_j\right)}{2\max_{j\in V}d_j}.$$
Set $$C_{0}=\min_{j\in V}\big(2\pi-\mathcal{L}(\mathcal{D},\Phi)_j\big)-2\epsilon_{0}\cdot\max_{j\in V}d_j>0.$$
By Proposition \ref{lim}, $\theta^{j}_{i}(t, t)$ is continuously differentiable and
$$\lim_{t\rightarrow 0}\theta^{j}_{i}(t, t)=\frac{\Phi_{ij}}{2}.$$
Therefore, for the positive constant $\epsilon_0$, there exists a constant $C>0$ such that for any $t\leq C$,
$$\theta^{j}_{i}(t, t)\leq \frac{\Phi_{ij}}{2}+\epsilon_{0}.$$
Choose any $t_{0}$ satisfying that $0<t_{0}<\min\{r_i, C\}$. Combining (\ref{com-H}) and Proposition \ref{lim} which states that $t\mapsto \theta^{j}_{i}(t, t)$ is strictly decreasing we have
\begin{equation}\label{ine}
\theta^{j}_{i}(r_{i}, r_{j})\leq \theta^{j}_{i}(r_{i}, r_{i})<\theta^{j}_{i}(t_{0}, t_{0})\leq \frac{\Phi_{ij}}{2}+\epsilon_{0}.
\end{equation}
It follows
\begin{equation*}
\begin{aligned}
K_{i}(r)=&\;2\pi-2\sum_{j\sim i}\theta^{j}_{i}(r_{i}, r_{j})\\
\geq &\;2\pi-2\sum_{j\sim i}(\frac{\Phi_{ij}}{2}+\epsilon_{0})\\
=&\;(2\pi-\mathcal{L}(\mathcal{D},\Phi)_i)-2\epsilon_{0}\cdot d_i\\
\geq&\;C_{0}.
\end{aligned}
\end{equation*}
This completes the proof of the claim.
Let $r(t)$ be a solution to the combinatorial Ricci flow (\ref{equ-H}).
Set $r_{M}(t):=\max_{i\in V}r_{i}(t)$. Then for a.e. $t\in [0, +\infty)$, there exists $i\in V$ depending on $t$, such that $r_{M}(t)=r_{i}(t)$ and $r'_{M}(t)=r_{i}'(t)$.
By the claim above, for the ideal circle pattern $r(t)$, we have $K_{i}(r(t))\geq C_{0}$, where $C_{0}>0$ is a sufficiently small positive constant depending on weighted cellular decomposition $(\mathcal D, \Phi)$. By the combinatorial Ricci flow (\ref{equ-H}), for a.e. $t\in [0, +\infty)$
$$r'_{M}(t)=r'_{i}(t)\leq -C_{0}\sinh(r_{i}(t))=-C_{0}\sinh(r_{M}(t)),$$
which is equivalent to $(\ln\tanh\frac{r_{M}(t)}{2})'\leq -C_{0}$.
Integrate both sides from $0$ to $t$, $\tanh\frac{r_{M}(t)}{2}\leq \tanh\frac{r_{M}(0)}{2}e^{-C_{0}t}.$
Hence $r(t)$ converges exponentially fast to $0$ as $t\rightarrow \infty$.
This completes the proof of Theorem \ref{thm-converge-zero}.
\end{proof}

\section{Eucldean background geometry}
\hspace{14pt}
In this section we give the proof of Theorem \ref{thm-character-E}. Ge-Hua-Zhou \cite{GHZ-2} proved the longtime existence, uniqueness and the invariance proposition for the solution $r(t)$ to the combinatorial Ricci flow (\ref{equ-E}) with ideal circle patterns.

\begin{theorem}[\cite{GHZ-2}, Lemma 4.2]\label{existence-E}
For any $r(0)\in R^{N}_{>0}$, the combinatorial Ricci flow (\ref{equ-E}) has a unique solution $r(t)$ which exists for all time $t\geq0$. Moreover, $\prod_{i=1}^{N}r_{i}(t)$ remains a constant $C=\prod_{i=1}^{N}r_{i}(0)$ along the solution $r(t)$ to the combinatorial Ricci flow (\ref{equ-E}).
\end{theorem}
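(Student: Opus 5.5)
Here is a plan for proving Theorem \ref{existence-E}. Long-time existence and uniqueness come from ODE theory and an a priori estimate, and the product invariance comes from a direct computation using the total curvature formula (\ref{GB-E}).

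\emph{Local existence and uniqueness.} Each inner angle $\theta_i^j(r_i,r_j)$ is a smooth function on $\mathbb{R}^2_{>0}$. This follows from the explicit Euclidean formula (\ref{cos-angle-E}) and from $l_{ij}>0$, which holds because $\Phi_{ij}\in(0,\pi)$. Hence $K_i$ is smooth on $\mathbb{R}^N_{>0}$. The right-hand side $(K_{av}-K_i)r_i$ of (\ref{equ-E}) is therefore locally Lipschitz on the open set $\mathbb{R}^N_{>0}$, and Picard--Lindel\"of gives a unique maximal solution on some interval $[0,T)$.

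\emph{Product invariance.} I would introduce $u_i=\ln r_i$, so that the flow becomes $u_i'=K_{av}-K_i$. Summing over $i$ and using (\ref{GB-E}), namely $\sum_i K_i=2\pi\chi(S)=NK_{av}$, gives $\frac{d}{dt}\sum_i u_i=0$. So $\prod_i r_i(t)$ stays equal to its initial value on $[0,T)$.

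\emph{Long-time existence ($T=+\infty$).} Since $\theta_i^j\in(0,\pi)$ and the degrees are finite, each $K_i$ is bounded: $|K_i|\le 2\pi+2\pi d_i$. Hence $|u_i'|\le C$ for a constant $C$ depending only on $(\mathcal D,\Phi)$. Integrating gives $|u_i(t)-u_i(0)|\le Ct$, so $r_i(t)\in[r_i(0)e^{-Ct},r_i(0)e^{Ct}]$. The solution therefore cannot approach $\partial\mathbb{R}^N_{>0}$ or infinity in finite time, and the extension theorem yields $T=+\infty$.

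\emph{Main obstacle.} The only delicate point is justifying the identity $\sum_iK_i=2\pi\chi(S)$ for every $r$, and not just for $r\equiv1$. It holds because the cone angles at star-vertices are all $2\pi$ by ($B_1$), and Gauss--Bonnet on flat triangles gives (\ref{GB}) with $\lambda=0$. I would use this as stated earlier in the paper.
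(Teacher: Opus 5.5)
Your proof is correct and takes essentially the same approach as the paper. The paper only cites \cite{GHZ-2} for this statement, but it proves the identical product invariance for the prescribed flow (Proposition~\ref{prescribed-inv}) by exactly your computation with $u_i=\ln r_i$ and the Euclidean total curvature formula; your long-time existence argument (bounded $K_i$ gives $|u_i'|\le C$, so the solution cannot leave $\mathbb{R}^N_{>0}$ in finite time) is the standard one for these flows, and you correctly identify ($B_1$) as what makes $\sum_{i\in V}K_i=2\pi\chi(S)$ hold for every $r$.
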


We need the following calculus lemma introduced by Ge-Hua \cite{GH}. For a continuous function $f: [0, +\infty)\rightarrow \mathbb{R}$ and any $C\in \mathbb{R}$, the upper level set of $f$ at $C$ is defined as
$$\{f> C\}:=\{t\in [0, +\infty): f(t)> C\}.$$
The lower level set $\{f< C\}$ is defined similarly.

\begin{lemma}[\cite{GH}, Lemma 3.9] \label{calculus}
Let $f: [0, +\infty)\rightarrow \mathbb{R}$ be a locally Lipschitz function. Suppose that there is a constant $C$ such that
$f'(t)\leq0$,  for  a.e. $t\in \{f> C\}$, then $$f(t)\leq \max\{f(0), C\}, \forall t\in [0, +\infty).$$
Similarly, if $f'(t)\geq0$,  for  a.e. $t\in \{f< C\}$, then $$f(t)\geq \min\{f(0), C\}, \forall t\in [0, +\infty).$$
\end{lemma}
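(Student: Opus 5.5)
This is the Ge--Hua calculus lemma restated, so I prove it directly by a first-exit-time argument. It suffices to treat the first assertion; the second follows by applying the first to $-f$ with constant $-C$. Indeed, $f'\ge 0$ a.e. on $\{f<C\}$ is the same as $(-f)'\le 0$ a.e. on $\{-f>-C\}$, and $-\max\{-f(0),-C\}=\min\{f(0),C\}$.

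Set $M=\max\{f(0),C\}$ and suppose, for contradiction, that $f(t_1)>M$ for some $t_1>0$. Let
\[
t_0=\sup\{t\in[0,t_1]: f(t)\le M\}.
\]
This set contains $0$, so $t_0$ is well defined. By continuity $f(t_0)\le M$, hence $t_0<t_1$. By the definition of the supremum, $f(t)>M\ge C$ for every $t\in(t_0,t_1]$, so the whole interval $(t_0,t_1]$ lies in the upper level set $\{f>C\}$.

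A locally Lipschitz function is Lipschitz on the compact interval $[t_0,t_1]$, hence absolutely continuous there, differentiable a.e., and it satisfies the fundamental theorem of calculus. Therefore
\[
f(t_1)-f(t_0)=\int_{t_0}^{t_1}f'(s)\,ds\le 0,
\]
because $f'\le 0$ a.e. on $(t_0,t_1]\subset\{f>C\}$. This gives $f(t_1)\le f(t_0)\le M$, which contradicts $f(t_1)>M$.

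The only delicate point is using absolute continuity rather than pointwise derivative arguments, since $f'$ exists only a.e. The Lipschitz hypothesis handles this through Rademacher's theorem in one dimension, or the Lebesgue differentiation theorem for absolutely continuous functions. No other obstacle is expected.
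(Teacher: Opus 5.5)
Your proof is correct: the set $\{t\in[0,t_1]: f(t)\le M\}$ is closed, so $f(t_0)\le M$ and $(t_0,t_1]\subset\{f>C\}$, and absolute continuity of the locally Lipschitz $f$ gives $f(t_1)\le f(t_0)\le M$, a contradiction; the second assertion follows by passing to $-f$ and $-C$. The paper gives no proof of its own and simply quotes the lemma from Ge--Hua, and your first-exit-time plus fundamental-theorem-of-calculus argument is the standard proof of this fact.
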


Next we prove the lower bound estimate for the solution $r(t)$ to the combinatorial Ricci flow (\ref{equ-E}).
\begin{theorem} \label{lower-E}
Let $(\mathcal{D},\Phi)$ be a weighted cellular decomposition of a closed surface $S$. Assume the character
$$\mathcal{L}(\mathcal{D},\Phi)_i\geq2\pi(1-\frac{\chi(S)}{N})$$
for all $i\in V$. Let $r(t)$ be a solution to the combinatorial Ricci flow (\ref{equ-E}). Then there exists a positive constant $C_{1}>0$ depending on the initial data $r(0)$ such that $r_{i}(t)\geq C_{1}$ for all vertices.
\end{theorem}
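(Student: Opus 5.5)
The plan is to run the same minimum-radius argument as in the proof of Theorem \ref{lower-H}, now in the Euclidean background geometry. There it becomes simpler, because the two-circle angle is exactly constant along the diagonal. Set
$$g(t):=\min_{m\in V} r_m(t).$$
Since each $r_m(t)$ is $C^1$ (Theorem \ref{existence-E}), $g$ is locally Lipschitz. For a.e. $t\in[0,+\infty)$ there is a vertex $i=i(t)$ with $g(t)=r_i(t)$ and $g'(t)=r_i'(t)$. The goal is to show $g'(t)\geq 0$ for a.e. $t$, which gives $g(t)\geq g(0)$ for all $t$. We can then take $C_1=\min_{m\in V} r_m(0)$, which indeed depends only on the initial data $r(0)$.

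The key step is a curvature estimate at the minimizing vertex $i$. For every $j\sim i$ we have $r_i\leq r_j$. By Theorem \ref{comparison}(1) and the Euclidean identity in Proposition \ref{lim}, $\theta_i^j(t,t)\equiv \Phi_{ij}/2$, so
$$\theta^j_i(r_i,r_j)\geq \theta^j_i(r_i,r_i)=\frac{\Phi_{ij}}{2}.$$
Summing over $j\sim i$ in (\ref{vertex-curvature}) gives
$$K_i(r(t))=2\pi-2\sum_{j\sim i}\theta^j_i(r_i,r_j)\leq 2\pi-\mathcal{L}(\mathcal{D},\Phi)_i\leq 2\pi-2\pi\Big(1-\frac{\chi(S)}{N}\Big)=\frac{2\pi\chi(S)}{N}=K_{av},$$
where the last inequality uses the character hypothesis. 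Substituting into the flow (\ref{equ-E}) yields
$$g'(t)=r_i'(t)=(K_{av}-K_i)\,r_i\geq 0 \quad\text{for a.e. } t.$$

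To conclude, we can apply Lemma \ref{calculus} with any constant $C$, since the sign condition holds on all of $[0,+\infty)$. Equivalently, we integrate the a.e. inequality $g'\ge 0$, which is legitimate because $g$ is locally Lipschitz, hence absolutely continuous on compact intervals. This gives $g(t)\geq g(0)$.

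The only potential obstacle is the non-strict inequality in the hypothesis. In the hyperbolic case Theorem \ref{lower-H} needed a margin $\epsilon_0$ and a small threshold $C$, because $\theta^j_i(t,t)$ only tends to $\Phi_{ij}/2$ as $t\to 0$. In the Euclidean case the diagonal angle is exactly $\Phi_{ij}/2$ for every $t>0$, so equality in the character condition causes no difficulty and no threshold is needed. This is also why $C_1$ depends on $r(0)$ rather than only on $(\mathcal{D},\Phi)$, which is consistent with the scale invariance of the Euclidean flow.
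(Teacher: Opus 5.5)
Your proof is correct and follows essentially the same route as the paper. Both take $g(t)=\min_{m\in V} r_m(t)$ and, at the minimizing vertex, combine Theorem \ref{comparison}(1) with the Euclidean identity $\theta_i^j(t,t)\equiv\Phi_{ij}/2$ to get $K_i\le 2\pi-\mathcal{L}(\mathcal{D},\Phi)_i\le K_{av}$, hence $g'\ge 0$ almost everywhere. The only difference is cosmetic: the paper restricts to a lower level set $\{g<C_1\}$ with a superfluous ``sufficiently large'' threshold before invoking the Ge--Hua calculus lemma, whereas you note that the sign condition holds for a.e.\ $t$ and obtain the explicit bound $C_1=\min_{m\in V} r_m(0)$.
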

\begin{proof}
Set $g(t): =\min_{m\in V}r_{m}(t)$. Then $g(t)$ is a locally Lipschitz function and for a.e. $t\in [0, +\infty)$, there exists a special vertex $i\in V$ depending on $t$, such that
\begin{equation}\label{equ-0}
g(t)=r_{i}(t), g'(t)=r_{i}'(t).
\end{equation}
For the particular vertex $i$, by Proposition \ref{lim}, $\theta_{i}^{j}(t, t)$ satisfies the following equality
$$\theta_{i}^{j}(t, t)\equiv \frac{\Phi_{ij}}{2}, \forall t>0.$$
Choosing any fixed sufficient large positive constant $C_{1}>0$, we claim that $g'(t)\geq0$,  for  a.e. $t\in \{g< C_{1}\}.$ \par
Let $t\in [0, +\infty)$ and $i\in V$ satisfying (\ref{equ-0}) and $t\in \{g< C_{1}\}$. Then for any triangle $\triangle(v_{i}v_{j}\tau_{k})$ in two-circle configurations realized by the ideal circle pattern $r(t)$,
$$r_{i}(t)=\min\{r_{i}(t), r_{j}(t)\}< C_{1}.$$
Set $\vec{r}(t)=(r_{i}(t), r_{j}(t))$, then by Theorem \ref{comparison} we have $$\theta_{i}^{j}(\vec{r}(t))\geq\theta_{i}^{j}(r_{i}(t), r_{i}(t)).$$
Moreover, under the assumption $\mathcal{L}(\mathcal{D},\Phi)_i\geq2\pi(1-\frac{\chi(S)}{N})$ for each vertex $i\in V$, we have
\begin{equation*}
\begin{aligned}
K_{av}-K_{i}(r(t))=&\;\frac{2\pi\chi(X)}{N}-2\pi+2\sum_{j\sim i}\theta_{i}^{j}(\vec{r}(t))\\
\geq &\;\frac{2\pi\chi(X)}{N}-2\pi+2\sum_{j\sim i}\frac{\Phi_{ij}}{2}\\
=&\;\mathcal{L}(\mathcal{D},\Phi)_i-2\pi(1-\frac{\chi(S)}{N})\\
\geq&\;0.
\end{aligned}
\end{equation*}
By (\ref{equ-0}) and the combinatorial Ricci flow (\ref{equ-E}),
$$g'(t)=r_{i}'(t)=(K_{av}-K_{i})r_{i}\geq 0.$$
This proves the claim. Then the theorem follows from the claim and Lemma \ref{calculus}.
\end{proof}

Furthermore, there also exists a positive upper bound for a solution $r(t)$ to the combinatorial Ricci flow (\ref{equ-E}) by combining Theorem \ref{existence-E} and Theorem \ref{lower-E}.
\begin{corollary}\label{upper-E}
Let $(\mathcal{D},\Phi)$ be a weighted cellular decomposition of a closed surface $S$. Assume the character
$$\mathcal{L}(\mathcal{D},\Phi)_i\geq2\pi(1-\frac{\chi(S)}{N})$$
for all $i\in V$. Let $r(t)$ be a solution to the combinatorial Ricci flow (\ref{equ-E}). Then there exists a positive constant $C_{2}>0$ depending on the initial data $r(0)$ such that $r_{i}(t)\leq C_{2}$ for all vertices.
\end{corollary}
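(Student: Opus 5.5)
The plan is to derive the upper bound directly from the conservation law in Theorem \ref{existence-E} combined with the lower bound of Theorem \ref{lower-E}. No new analysis of the flow should be needed. The idea is that a product of positive quantities which stays constant, with every factor bounded below, forces every factor to be bounded above as well.

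Concretely, let $r(t)$ be the solution of (\ref{equ-E}) with initial data $r(0)\in\mathbb{R}^N_{>0}$. By Theorem \ref{existence-E}, the product satisfies
$$\prod_{m=1}^{N}r_m(t)=C:=\prod_{m=1}^{N}r_m(0)$$
for all $t\ge 0$. By Theorem \ref{lower-E}, under the character assumption $\mathcal{L}(\mathcal{D},\Phi)_i\geq 2\pi(1-\frac{\chi(S)}{N})$, there is $C_1>0$, depending only on $r(0)$, with $r_m(t)\ge C_1$ for all $m\in V$ and all $t\ge0$. Here one should read that proof via Lemma \ref{calculus}: it gives $g(t)\ge \min\{g(0),C_1\}$, so the effective lower bound is $\min\{\min_m r_m(0),C_1\}$. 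I rename this quantity $C_1$; it is still positive and depends only on $r(0)$.

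Now fix a vertex $i$ and a time $t$. Then
$$r_i(t)=\frac{C}{\prod_{m\ne i}r_m(t)}\le \frac{C}{C_1^{\,N-1}}=:C_2 .$$
The constant $C_2$ is positive and depends only on $r(0)$ through $C$ and $C_1$. This gives the corollary.

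The only point requiring care is the dependence of the constants. The lower bound in Theorem \ref{lower-E} must be uniform in $t$ and must depend only on the initial data; it must not depend on the vertex selected at each time. Lemma \ref{calculus} supplies exactly this uniformity. Once that is in place, the conservation of $\prod r_i$ closes the argument immediately. I therefore expect no real obstacle beyond correctly tracking that $C_1$ is the minimum of the threshold constant and the initial minimum radius.
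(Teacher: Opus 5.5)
Your proposal is correct and is the paper's argument: the conservation of $\prod_{m} r_m(t)$ from Theorem \ref{existence-E} combined with the uniform lower bound from Theorem \ref{lower-E} gives $r_i(t)\le \prod_m r_m(0)/C_1^{N-1}$. Your remark that Lemma \ref{calculus} really delivers the lower bound $\min\{\min_m r_m(0),C_1\}$ is correct bookkeeping. In fact, since $\theta_i^j(t,t)\equiv\Phi_{ij}/2$ in the Euclidean case, $g'\ge 0$ holds a.e. without any threshold, so one may simply take $C_1=\min_m r_m(0)$.
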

\begin{proof}
By Theorem \ref{lower-E}, there exists a constant $C_{1}>0$ depending on the initial data $r(0)$ such that $r_{i}(t)\geq C_{1}, i=1, \cdots, N$.
Set $g(t): =\max_{m\in V}r_{m}(t)$. Then $g(t)$ is a locally Lipschitz function and for a.e. $t\in [0, +\infty)$, there exists a special vertex $i\in V$ depending on $t$, such that
\begin{equation}\label{equ2}
g(t)=r_{i}(t).
\end{equation}
Then by Theorem \ref{existence-E} we have
\begin{equation}\label{equ2}
g(t)=r_{i}(t)=\frac{\prod_{i=1}^{N}r_{i}(t)}{\prod_{j\neq i}^{N}r_{j}(t)}=\frac{\prod_{i=1}^{N}r_{i}(0)}{\prod_{j\neq i}^{N}r_{j}(t)}\leq\frac{\prod_{i=1}^{N}r_{i}(0)}{C_{1}^{N-1}}=C_{2},
\end{equation}
which completes the proof.
\end{proof}

Let $r(t)$ be a solution to the combinatorial Ricci flow (\ref{equ-E}). Then by Theorem \ref{lower-E} and Corollary \ref{upper-E}, there exist positive constants $C_{1}$ and $C_{2}$ depending on the weighted cellular decomposition $(\mathcal D, \Phi)$ such that
$$C_{1}\leq r_{i}(t)\leq C_{2}, \forall i\in V, t\in [0, +\infty).$$
This is equivalent to say that $r(t)$ lies in the compact region in $\mathbb{R}^{N}_{>0}$, which implies that $r(t)$ converges as $t\rightarrow +\infty$ (See the proof of Theorem 1.8 in \cite{GHZ-2} for more details). Then the existence part of Theorem \ref{thm-character-E} is obtained by the following proposition proved by Ge-Hua-Zhou \cite{GHZ-2}.
\begin{proposition}[Proposition 4.3, \cite{GHZ-2}]\label{convergence-E}
If the solution $r(t)$ to the flow (\ref{equ-E}) converges to a vector $r^{*}\in\mathbb{R}_{>0}^{N}$, then $r^{*}$ gives an ideal circle pattern with all vertex curvatures equal to $K_{av}$.
\end{proposition}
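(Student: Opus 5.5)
The plan is a standard ODE limit argument combined with continuity of the curvature map. First I would record that the map $r\mapsto K(r)$ is continuous (indeed smooth) on $\mathbb{R}^N_{>0}$. The side lengths $l_{ij}$ in (\ref{length-E}) are smooth positive functions of $(r_i,r_j)$ because $\Phi_{ij}\in(0,\pi)$. The inner angles $\theta_i^j$ are then smooth functions of the side lengths through the cosine law, as computed in (\ref{cos-angle-E}). Hence each $K_i=2\pi-2\sum_{j\sim i}\theta_i^j$ is continuous. Since $r(t)\to r^*\in\mathbb{R}^N_{>0}$, it follows that $K_i(r(t))\to K_i(r^*)$ for every $i$.

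Next I would pass to logarithms. Because $r_i(t)>0$, the flow (\ref{equ-E}) reads
$$\frac{d}{dt}\ln r_i(t)=K_{av}-K_i(r(t)).$$
The right-hand side converges to $c_i:=K_{av}-K_i(r^*)$ as $t\to+\infty$. Suppose $c_i\neq 0$, say $c_i>0$. Then for $t\geq T$ we have $\frac{d}{dt}\ln r_i\geq c_i/2$, so $\ln r_i(t)\geq \ln r_i(T)+\frac{c_i}{2}(t-T)\to+\infty$. This contradicts $r_i(t)\to r_i^*<\infty$. The case $c_i<0$ is symmetric: $\ln r_i(t)\to-\infty$, contradicting $r_i^*>0$. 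Hence $K_i(r^*)=K_{av}$ for all $i\in V$.

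Finally I would interpret $r^*$ geometrically. By Thurston's construction, $r^*$ yields a Euclidean cone metric $\mu(r^*)$ on $S$ obtained by gluing the triangles $\triangle(v_iv_j\tau_k)$ of $\mathcal{T}(\mathcal{D})$. By ($B_1$), the cone angle at every star-vertex is $2\pi$, so the metric is smooth there. By the previous step, every primal vertex carries the same curvature $K_{av}=2\pi\chi(S)/N$. The circles centred at the $v_i$ with radii $r_i^*$ pass through the star-vertex points, and they meet with exterior angles $\Phi$ by construction. Each interstice is then exactly the single point $\tau\in F$, so the pattern is ideal of $\mathcal{D}$-type, with all vertex curvatures equal to $K_{av}$.

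The main point requiring care is ensuring that the limiting configuration is nondegenerate, so that the "ideal" conclusion holds. This is automatic here: $r^*$ lies in the open orthant, so all two-circle configurations exist by Lemma \ref{two circles}. The triangle inequalities are preserved, so no triangle degenerates in the limit.
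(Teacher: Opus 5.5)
Your proof is correct and follows essentially the standard argument for this result, which the paper does not prove itself but quotes from Ge--Hua--Zhou: $K$ is continuous on $\mathbb{R}^N_{>0}$, and $\frac{d}{dt}\ln r_i=K_{av}-K_i(r(t))$ converges, so it must tend to $0$ when $r(t)$ converges inside the open orthant. One interpretive caveat: when $\chi(S)\neq 0$, the limit pattern lives on a Euclidean cone metric with cone points of curvature $K_{av}$ at the centres, so ``ideal $\mathcal{D}$-type'' in your last paragraph should be read in that cone-metric sense, as the statement itself does, not as a pattern on a smooth constant-curvature surface $(S,\mu)$.
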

Furthermore, by Theorem 1.8 of \cite{GHZ-2}, if the combinatorial Ricci flow (\ref{equ-E}) converges, then it converges exponentially fast. The uniqueness part of Theorem \ref{thm-character-H} is a consequence of the Bobenko-Springborn theorem \cite{BS} which states that the curvature map $r\mapsto K$ is injective up to scalings (see Theorem 1.5 in \cite{GHZ-2} for more details). To sum up, we show that there exists a unique constant ideal circle pattern $r_{av}$ (ideal circle pattern with constant curvature $K_{av}$), and the solution $r(t)$ to the combinatorial Ricci flow (\ref{equ-E}) converges exponentially fast to $r_{av}$ for any initial ideal circle pattern $r(0)\in \mathbb{R}^{N}_{>0}$.\par
Finally, we prove the nonexistence part of Theorem \ref{thm-character-E}. In fact, we can even show more, that is, any initial ideal circle pattern shrinks to a point along the combinatorial Ricci flow (\ref{equ-E}).
\begin{theorem}\label{thm-converge-E}
Let $G$ be the 1-skeleton of a cellular decomposition $\mathcal{D}$ of a closed surface $S$ with a weight $\Phi: E\rightarrow (0, \pi)$.  If the character $\mathcal{L}(\mathcal{D},\Phi)_i<2\pi(1-\frac{\chi(S)}{N})$ at each vertex, then there exists no constant ideal circle patterns. Moreover, the solution $r(t)$ to the combinatorial Ricci flow (\ref{equ-E}) converges exponentially fast to $0$ when $t\rightarrow +\infty$.
\end{theorem}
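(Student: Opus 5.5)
The plan is to show first that the hypothesis of Theorem \ref{thm-converge-E} can never be satisfied. Both conclusions then hold vacuously. The key input is Proposition \ref{prop-euler-number}, which identifies the average character exactly:
$$\mathcal{L}_{av}=\frac{1}{N}\sum_{i\in V}\mathcal{L}(\mathcal{D},\Phi)_i=2\pi\Big(1-\frac{\chi(S)}{N}\Big).$$
Suppose $\mathcal{L}(\mathcal{D},\Phi)_i<2\pi(1-\frac{\chi(S)}{N})$ held at every vertex $i\in V$. Summing over the $N$ vertices and dividing by $N$ would give $\mathcal{L}_{av}<2\pi(1-\frac{\chi(S)}{N})=\mathcal{L}_{av}$, which is a contradiction. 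So no weighted cellular decomposition $(\mathcal{D},\Phi)$ satisfies the assumption.

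As a consistency check, I would also run the argument one would write if the hypothesis were not vacuous, since it mirrors the proof of Theorem \ref{thm-converge-zero}. Take $i$ with $r_i=\max_j r_j$. By Theorem \ref{comparison}(2) and the Euclidean identity $\theta_i^j(t,t)\equiv\Phi_{ij}/2$ from Proposition \ref{lim}, we get $\theta_i^j(r_i,r_j)\le\Phi_{ij}/2$. Hence
$$K_{av}-K_i\le \mathcal{L}(\mathcal{D},\Phi)_i-2\pi\Big(1-\frac{\chi(S)}{N}\Big)<0.$$
Lemma \ref{calculus} would then show that $r_M(t)=\max_i r_i(t)$ decays exponentially. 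But Theorem \ref{existence-E} says $\prod_i r_i(t)$ is constant along the flow, so $r(t)\to0$ cannot actually occur. This is a second way of seeing that the assumption is empty.

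For the nonexistence statement, one can also argue directly. A constant ideal circle pattern would have $K_i=K_{av}$ for all $i$. At a vertex $i$ of maximal radius, the inequality above gives $K_{av}-K_i<0$, which is impossible.

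There is no real obstacle here. The only point requiring care is to state clearly that the statement is vacuously true by Proposition \ref{prop-euler-number}, and that the substantive content of Theorem \ref{thm-character-E}(2) lies in its nonexistence part. That nonexistence part is covered by the direct argument at the maximal vertex.
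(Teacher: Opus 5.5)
Your proof is correct, and it takes a different route from the paper. The paper never uses the averaging identity here. It runs a maximum-principle argument: take $i$ with $r_i=\max_j r_j$, and combine Theorem~\ref{comparison}(2) with the Euclidean identity $\theta_i^j(t,t)\equiv\Phi_{ij}/2$. This gives $K_i-K_{av}\ge C_0>0$ uniformly. Integrating $(\ln r_M)'\le -C_0$ then yields $r_M(t)\le r_M(0)e^{-C_0t}$, and the same estimate at the maximal vertex rules out $K\equiv K_{av}$.

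You instead observe that Proposition~\ref{prop-euler-number} makes the hypothesis unsatisfiable, so both conclusions hold vacuously. The identity can also be checked directly from $(B_1)$:
$\sum_i\mathcal{L}(\mathcal{D},\Phi)_i=2\sum_{e\in E}\Phi(e)=\sum_{\tau\in F}(m_\tau-2)\pi=2\pi(|E|-|F|)=2\pi(N-\chi(S))$.
Your second check independently confirms that the class of data the theorem covers is empty: the decay the paper derives would contradict the conservation of $\prod_i r_i$ in Theorem~\ref{existence-E}.

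Your route is shorter and more informative. It shows the statement has no content in the paper's setting, where $(B_1)$ is in force, as it must be for ideal patterns and for Theorem~\ref{existence-E}. The same averaging shows that the hypothesis of Theorem~\ref{thm-character-E}(1) forces $\mathcal{L}(\mathcal{D},\Phi)_i=2\pi(1-\chi(S)/N)$ at every vertex. The paper's route uses neither $(B_1)$ nor Gauss--Bonnet. It is therefore the argument that would still say something for weights violating $(B_1)$, although then the flow no longer preserves $\prod_i r_i$ and no longer models ideal patterns.

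Two small corrections to your write-up.
\begin{itemize}
\item The Ge--Hua calculus lemma only gives a bound by $\max\{f(0),C\}$, not exponential decay. The decay comes from the uniform gap $C_0=\min_j\big(2\pi(1-\chi(S)/N)-\mathcal{L}(\mathcal{D},\Phi)_j\big)>0$ and direct integration.
\item Your closing remark that the substantive content of Theorem~\ref{thm-character-E}(2) lies in its nonexistence part is misleading. That part has the identical hypothesis. The maximal-vertex argument needs the strict inequality at whichever vertex happens to be maximal, i.e.\ at every vertex, so it is equally vacuous.
\end{itemize}
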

\begin{proof}
We claim that if the character $\mathcal{L}(\mathcal{D},\Phi)_i<2\pi(1-\frac{\chi(S)}{N})$ at each vertex, then for any ideal circle pattern $r$ there exists a vertex $i$ such that
$$K_{i}-K_{av}\geq C_{0},$$ where $C_{0}>0$ is a positive constant depending only on the weighted cellular decomposition $(\mathcal D, \Phi)$.\par
Let $i$ be the vertex such that $r_{i}=\max_{j\in V}r_{j}$. For any $j\sim i$, we have $r_{i}=\max\{r_{i}, r_{j}\}$. By Theorem \ref{comparison},
\begin{equation}\label{com}
\theta^{j}_{i}(r_{i}, r_{j})\leq \theta^{j}_{i}(r_{i}, r_{i}).
\end{equation}
If the character $\mathcal{L}(\mathcal{D},\Phi)_j<2\pi(1-\frac{\chi(S)}{N})$ for each vertex $j\in V$, we choose the constant $\epsilon_{0}$ depends only on the data of the weighted cellular decomposition $(\mathcal D, \Phi)$ on $S$ so that
$$0<\epsilon_{0}<\frac{\min_{j\in V}\big(2\pi(1-\frac{\chi(S)}{N})-\mathcal{L}(\mathcal{D},\Phi)_j\big)}{2\max_{j\in V}d_j}.$$
Set $$C_{0}=\min_{j\in V}\big(2\pi(1-\frac{\chi(S)}{N})-\mathcal{L}(\mathcal{D},\Phi)_j\big)-2\epsilon_{0}\cdot\max_{j\in V}d_j>0.$$
By Proposition \ref{lim}, $\theta^{j}_{i}(t, t)$ is a constant, i.e.
\begin{equation}\label{inv}
\theta^{j}_{i}(t, t)\equiv \frac{\Phi_{ij}}{2}, \forall t>0.
\end{equation}
Hence for the positive constant $\epsilon_{0}>0$,
$$\theta^{j}_{i}(t, t)< \frac{\Phi_{ij}}{2}+\epsilon_{0}, \forall t>0.$$
By (\ref{com}) and (\ref{inv} we have
\begin{equation*}
\begin{aligned}
K_{i}-K_{av}=&\;2\pi-2\sum_{j\sim i}\theta^{j}_{i}(r_{i}, r_{j})-\frac{2\pi\chi(S)}{N}\\
\geq &\;2\pi-2\sum_{j\sim i}(\frac{\Phi_{ij}}{2}+\epsilon_{0})-\frac{2\pi\chi(S)}{N}\\
=&\;\big(2\pi(1-\frac{\chi(S)}{N})-\mathcal{L}(\mathcal{D},\Phi)_i\big)-2\epsilon_{0}\cdot d_i\\
\geq&\;C_{0}.
\end{aligned}
\end{equation*}
This completes the proof of the claim.
Let $r(t)$ be a solution to the combinatorial Ricci flow (\ref{equ-E}).
Set $r_{M}(t):=\max_{i\in V}r_{i}(t)$. Then for a.e. $t\in [0, +\infty)$, there exists $i\in V$ depending on $t$, such that $r_{M}(t)=r_{i}(t)$ and $r'_{M}(t)=r_{i}'(t)$.
By the claim above, we have $K_{i}(r(t))-K_{av}\geq C_{0}$, where $C_{0}>0$ is a positive constant depending on weighted cellular decomposition $(\mathcal D, \Phi)$. By the combinatorial Ricci flow (\ref{equ-E}), for a.e. $t\in [0, +\infty)$
$$r'_{M}(t)=r'_{i}(t)\leq -C_{0}r_{i}(t)=-C_{0}r_{M}(t),$$
which is equivalent to $(\ln r_{M}(t))'\leq -C_{0}$.
Integrate both sides from $0$ to $t$, $r_{M}(t)\leq r_{M}(0)e^{-C_{0}t}.$
Hence $r(t)$ converges exponentially fast to $0$ as $t\rightarrow \infty$.
This completes the proof of Theorem \ref{thm-converge-E}.
\end{proof}
\section{The prescribed flow}
\hspace{14pt}
Recall $\bar{K}=(\bar{K_1}, \cdots, \bar{K_N})$ is a prescribed vertex curvature and we can consider the prescribed circle pattern problem. The prescribed ideal Ricci flow in the hyperbolic background geometry is
\begin{equation}\label{e-prescribe-H}
\frac{dr_i}{dt}=(\bar{K_i}-K_i)\sinh r_i
\end{equation}
and the prescribed ideal Ricci flow in the Euclidean background geometry is
\begin{equation}\label{e-prescribe-E}
\frac{dr_i}{dt}=(\bar{K_i}-K_i)r_i.
\end{equation}
\subsection{Proof of Theorem \ref{prescribed-H}}
\hspace{14pt}
First we can prove Theorem \ref{prescribed-H} as follows.
\begin{proof}
Theorem 3.7 in \cite{GHZ-2} shows that there exists a unique longtime solution $r(t), t\in [0,\infty)$ to the prescribed ideal Ricci flow (\ref{e-prescribe-H}).
To show the convergence part, similar to the proof of Theorem \ref{thm-character-H}, we just need to show $r(t)\subset \subset\mathbb{R}^N_{>0}$.
We show this by proving the following Claim 1 and Claim 2.\\
\textbf{Claim 1}: If $\bar{K_i}<2\pi$, $\forall i\in V$, then all $r_i(t)$ are bounded from above uniformly.

By Proposition \ref{lim}, $\theta^{j}_{i}(t, t)$ is continuously differentiable and
$$\textmd{lim}_{t\rightarrow +\infty}\theta^{j}_{i}(t, t)=0.$$
Hence for any $0<\epsilon_{0}<\min_i(2\pi-\bar{K_i})/(2\max_i d_i)$, there is a positive constant $C=C(\mathcal{D},\Phi, \epsilon_{0})>0$ so that whenever $t\geq C$, then $\theta^{j}_{i}(t, t)\leq \epsilon_{0}$.
Set $g(t): =\max_{m\in V}r_{m}(t)$. Then $g(t)$ is a locally Lipschitz function and for a.e. $t\in [0, +\infty)$, there exists a special vertex $i\in V$ depending on $t$, such that
\begin{equation}\label{equ-prescribed-H}
g(t)=r_{i}(t), g'(t)=r_{i}'(t).
\end{equation}
We claim that $g'(t)\leq0$,  for  a.e. $t\in \{g> C\}.$ \par
Let $t\in [0, +\infty)$ and $i\in V$ satisfying (\ref{equ-prescribed-H}) and $t\in \{g> C\}$. Then for any $j\sim i$,
$r_{i}(t)=\max\{r_{i}(t), r_{j}(t)\}>C$. By Theorem \ref{comparison},
$$\theta^{j}_{i}(r_{i}(t), r_{j}(t))\leq\theta^{j}_{i}(r_{i}(t), r_{i}(t))\leq\epsilon_{0}.$$
 It follows at each vertex $i \in V$,
$$\bar{K_i}-K_i=\bar{K_i}-2\pi+2\sum_{j\sim i}\theta^{j}_{i}(r_{i}(t), r_{j}(t))
\leq\bar{K_i}-2\pi+2d_i\epsilon_{0}<0.$$
Then by the prescribed ideal Ricci flow in hyperbolic background geometry
\begin{equation*}
g'(t)=r'_i(t)=(\bar{K_i}-K_i)\sinh r_i<0.
\end{equation*}
This proves the claim. Then Claim 1 follows from this claim and Lemma \ref{calculus}.\\
\textbf{Claim 2}: If $\bar{K_i}>2\pi-\mathcal{L}(\mathcal{D},\Phi)_i$, $\forall i\in V$, then all $r_i(t)$ have a uniform positive lower bound.

By Proposition \ref{lim}, $\theta^{j}_{i}(t, t)$ is continuously differentiable and
$$\textmd{lim}_{t\rightarrow 0}\theta^{j}_{i}(t, t)=\frac{\Phi_{ij}}{2}.$$
Hence for any $0<\epsilon_{0}<\min_i(\bar{K_i}-(2\pi-\mathcal{L}(\mathcal{T},\Phi)_i))/(2\max_i d_i)$, there is a positive constant $\tilde{C}=\tilde{C}(\mathcal{D},\Phi, \epsilon_{0})>0$ so that whenever $t\leq \tilde{C}$, then $\theta^{j}_{i}(t, t)\geq \frac{\Phi_{ij}}{2}-\epsilon_{0}$.
Set $g(t): =\min_{m\in V}r_{m}(t)$. Then $g(t)$ is a locally Lipschitz function and for a.e. $t\in [0, +\infty)$, there exists a special vertex $i\in V$ depending on $t$, such that
\begin{equation}\label{equ1}
g(t)=r_{i}(t), g'(t)=r_{i}'(t).
\end{equation}
We claim that $g'(t)\geq0$,  for  a.e. $t\in \{g< \tilde{C}\}.$ \par
Let $t\in [0, +\infty)$ and $i\in V$ satisfying (\ref{equ1}) and $t\in \{g<\tilde{C}\}$. Then for any $j\sim i$,
$r_{i}(t)=\min\{r_{i}(t), r_{j}(t)\}<\tilde{C}$. By Theorem \ref{comparison},
$$\theta^{j}_{i}(r_{i}(t), r_{j}(t))\geq\theta^{j}_{i}(r_{i}(t), r_{i}(t))\geq\frac{\Phi_{ij}}{2}-\epsilon_{0}.$$
 It follows at each vertex $i \in V$,
 \begin{equation*}
\begin{aligned}
\bar{K_i}-K_i=&\;\bar{K_i}-2\pi+2\sum_{j\sim i}\theta^{j}_{i}(r_{i}(t), r_{j}(t))\\
\geq&\;\bar{K_i}-2\pi+2\sum_{j\sim i}(\frac{\Phi_{ij}}{2}-\epsilon_{0})\\
=&\;\bar{K_i}-(2\pi-\mathcal{L}(\mathcal{D},\Phi)_i)-2d_i\epsilon_{0}\\
>&\;0.
\end{aligned}
\end{equation*}
Then by the prescribed ideal Ricci flow (\ref{e-prescribe-H})
\begin{equation*}\label{equ-prescribe-H}
g'(t)=r'_i(t)=(\bar{K_i}-K_i)\sinh r_i>0.
\end{equation*}
This proves the claim. Then Claim 2 follows from this claim and Lemma \ref{calculus}.\par
Finally we show the nonconvergence part. Assume $\bar{K_i}\leq2\pi-\mathcal{L}(\mathcal{D},\Phi)_i$ at each vertex $i\in V$, which is equivalent to $\mathcal{L}(\mathcal{D},\Phi)_i\leq2\pi-\bar{K_i}$ at each vertex $i\in V$. Hence the average character $\mathcal{L}_{av}\leq2\pi-(\sum_{i\in V}\bar{K_i})/N$. By Proposition \ref{prop-euler-number} we get $\sum_{i\in V}\bar{K_i}\leq2\pi\chi(S)$. The total curvature formula (\ref{GB}) in the hyperbolic background geometry says:
$$\sum_{i\in V}K_i=2\pi\chi(S)+\text{Area}(S)>2\pi\chi(S).$$
Hence there is no ideal circle pattern $\bar{r}$ with curvature $K(\bar{r})=\bar{K}$. Similar to the proof of Theorem \ref{thm-converge-zero}, we can show that
for any initial ideal circle pattern, the solution $r(t)$ to the prescribed flow (\ref{eq-prescribe-H}) converges exponentially fast to $0$ when $t\rightarrow +\infty$ if $\bar{K_i}<2\pi-\mathcal{L}(\mathcal{D},\Phi)_i$ for all vertices. We omit the details and complete the proof of Theorem \ref{prescribed-H}.
\end{proof}

\subsection{Proof of Theorem \ref{prescribed-E}}
\hspace{14pt}
In this subsection we give the proof of Theorem \ref{prescribed-E}. In the Euclidean background geometry, first we assume the prescribed vertex curvature $\bar{K}=(\bar{K_1}, \cdots, \bar{K_N})$ satisfies the following equality
\begin{equation}\label{prescribe-sum}
\sum_{i=1}^{N}\bar{K}_{i}=2\pi\chi(S).
\end{equation}
Then we extend Theorem \ref{existence-E} and establish the invariance proposition for the solution $r(t)$ to the prescribed combinatorial Ricci flow (\ref{e-prescribe-E}) with ideal circle patterns.
\begin{proposition}\label{prescribed-inv}
Let $r(t)$ be a solution to the prescribed combinatorial Ricci flow (\ref{e-prescribe-E}) with ideal circle patterns. Then $\prod_{i=1}^{N}r_{i}(t)$ remains a constant $C=\prod_{i=1}^{N}r_{i}(0)$ along the solution $r(t)$ to the prescribed combinatorial Ricci flow (\ref{e-prescribe-E}).
\end{proposition}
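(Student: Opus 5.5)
The plan is to differentiate $\ln\prod_{i=1}^{N}r_i(t)=\sum_{i=1}^N\ln r_i(t)$ along the flow and show that the derivative vanishes identically. By (\ref{e-prescribe-E}), for each $i$ we have $\frac{d}{dt}\ln r_i=\bar{K_i}-K_i$. Summing over all vertices gives
$$\frac{d}{dt}\ln\prod_{i=1}^{N}r_i(t)=\sum_{i=1}^{N}\bar{K_i}-\sum_{i=1}^{N}K_i(r(t)).$$

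Both sums can be evaluated. The first equals $2\pi\chi(S)$ by the standing assumption (\ref{prescribe-sum}). The second also equals $2\pi\chi(S)$, because the combinatorial Gauss--Bonnet formula (Proposition \ref{GB}) holds in the Euclidean background geometry ($\lambda=0$). In the form (\ref{GB-E}) this reads $\sum_{i}K_i(r)=2\pi\chi(S)$ for every radius vector $r\in\mathbb{R}^N_{>0}$. Hence the derivative is identically zero for all $t$ in the existence interval.

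It follows that $\ln\prod_i r_i(t)$ is constant. Therefore $\prod_{i=1}^N r_i(t)=\prod_{i=1}^N r_i(0)$ for all $t$, which proves the proposition.

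The only point needing care is that (\ref{GB-E}) holds for arbitrary $r$, not only for $r\equiv 1$. This follows from the construction. All star-vertex cone angles equal $2\pi$ by ($B_1$), and each Euclidean triangle has angle sum $\pi$. Counting the angles over all triangles of $\mathcal{T}(\mathcal{D})$ and applying Euler's formula to $\mathcal{T}(\mathcal{D})$ then gives $\sum_i(2\pi-a_i)=2\pi\chi(S)$. This is exactly Proposition \ref{GB} with $\lambda=0$, so I would simply invoke it, as was done in the proof of Proposition \ref{prop-euler-number}.

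Smoothness of $r(t)$, which justifies the differentiation, comes from the ODE right-hand side being smooth in $r\in\mathbb{R}^N_{>0}$. I would present the argument over whatever maximal interval the solution exists on. In this way the invariance can later be used, exactly as in Corollary \ref{upper-E}, to obtain upper bounds from lower bounds.
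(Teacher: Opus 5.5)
Your proposal is correct and follows the same route as the paper: the paper substitutes $u_i=\ln r_i$, so that $u_i'=\bar{K}_i-K_i$, and combines the assumption $\sum_i\bar{K}_i=2\pi\chi(S)$ with the Euclidean Gauss--Bonnet identity $\sum_i K_i=2\pi\chi(S)$ to get $\frac{d}{dt}\sum_i u_i=0$. You also check that this identity holds for every radius vector $r$, using the $2\pi$ cone angles at the star-vertices and Euler's formula; the paper takes that step for granted, so your version is slightly more complete.
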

\begin{proof}
By a coordinate transformation, set $u_{i}=\ln r_{i}$ then we can equivalently rewrite the prescribed combinatorial Ricci flow (\ref{e-prescribe-E}) as
\begin{equation}\label{e-prescribe-E-u}
    u'_{i}(t)=\frac{du_{i}}{dt}=\bar{K}_{i}-K_{i}.
\end{equation}
Suppose that $r(t)=(r_{1}(t), \cdots, r_{N}(t)), t\in [0, +\infty)$ is a solution to the prescribed combinatorial Ricci flow (\ref{e-prescribe-E}), then $u_{i}(t)=\ln r_{i}(t)(i=1, \cdots, N)$ is a solution to (\ref{e-prescribe-E-u}). By (\ref{prescribe-sum}) and the total curvature formula (\ref{GB}) in the Euclidean background geometry
$$\sum_{i\in V}K_i=2\pi\chi(S),$$
we have
$$\frac{d(\sum_{i=1}^{N}u_{i}(t))}{dt}=\sum_{i=1}^{N}u'_{i}(t)=\sum_{i=1}^{N}(\bar{K}_{i}-K_{i})=2\pi\chi(S)-2\pi\chi(S)=0.$$
This implies that $\sum_{i=1}^{N}u_{i}(t)$ remains a constant value $C=\sum_{i=1}^{N}u_{i}(0)$ for any $t\in [0, +\infty)$, equivalently, $$\prod_{i=1}^{N}r_{i}(t)\equiv\prod_{i=1}^{N}r_{i}(0), \forall t\in [0, +\infty).$$
\end{proof}

Next we can prove Theorem \ref{prescribed-E} as follows.
\begin{proof}
Theorem 4.5 in \cite{GHZ-2} shows that there exists a unique longtime solution $r(t), t\in [0,\infty)$ to the prescribed ideal Ricci flow (\ref{e-prescribe-E}).
To show the convergence part, similar to the proof of Theorem \ref{thm-character-E}, we just need to show $r(t)\subset \subset\mathbb{R}^N_{>0}$.
We show this by proving the following Claim 1 and Claim 2.\\
\textbf{Claim 1}: If $\bar{K_i}\geq2\pi-\mathcal{L}(\mathcal{D},\Phi)_i$, $\forall i\in V$, then all $r_i(t)$ have a uniform positive lower bound.\par

Set $g(t): =\min_{m\in V}r_{m}(t)$. Then $g(t)$ is a locally Lipschitz function and for a.e. $t\in [0, +\infty)$, there exists a special vertex $i\in V$ depending on $t$, such that
\begin{equation}\label{equ-priscribe-g}
g(t)=r_{i}(t), g'(t)=r_{i}'(t).
\end{equation}
For the particular vertex $i$, by Proposition \ref{lim}, $\theta_{i}^{j}(t, t)$ satisfies the following equality
$$\theta_{i}^{j}(t, t)\equiv \frac{\Phi_{ij}}{2}, \forall t>0.$$
Choosing any fixed sufficient large positive constant $C_{1}>0$, we claim that $g'(t)\geq0$,  for  a.e. $t\in \{g< C_{1}\}.$ \par
Let $t\in [0, +\infty)$ and $i\in V$ satisfying (\ref{equ-priscribe-g}) and $t\in \{g< C_{1}\}$. Then for any triangle $\triangle(v_{i}v_{j}\tau_{k})$ in two-circle configurations realized by the ideal circle pattern $r(t)$,
$$r_{i}(t)=\min\{r_{i}(t), r_{j}(t)\}< C_{1}.$$
Set $\vec{r}(t)=(r_{i}(t), r_{j}(t))$, then by Theorem \ref{comparison} we have $$\theta_{i}^{j}(\vec{r}(t))\geq\theta_{i}^{j}(r_{i}(t), r_{i}(t)).$$
Moreover, under the assumption $\bar{K}_{i}\geq 2\pi-\mathcal{L}(\mathcal{D},\Phi)_i$ for each vertex $i\in V$, we have
\begin{equation*}
\begin{aligned}
\bar{K}_{i}-K_{i}(r(t))=&\;\bar{K}_{i}-2\pi+2\sum_{j\sim i}\theta_{i}^{j}(\vec{r}(t))\\
\geq &\;\bar{K}_{i}-2\pi+2\sum_{j\sim i}\frac{\Phi_{ij}}{2}\\
=&\;\bar{K}_{i}-(2\pi-\mathcal{L}(\mathcal{D},\Phi)_i)\\
\geq&\;0.
\end{aligned}
\end{equation*}
By (\ref{equ-priscribe-g}) and the prescribed combinatorial Ricci flow (\ref{e-prescribe-E}),
$$g'(t)=r_{i}'(t)=(\bar{K}_{i}-K_{i})r_{i}\geq 0.$$
This proves the claim. Then the theorem follows from the claim and Lemma \ref{calculus}.\\
\textbf{Claim 2}: If $\bar{K_i}\geq2\pi-\mathcal{L}(\mathcal{D},\Phi)_i$, $\forall i\in V$, then all $r_i(t)$ have a uniform positive upper bound.\par

By Claim 1, there exists a constant $C_{1}>0$ depending on the initial data $r(0)$ such that $r_{i}(t)\geq C_{1}, i=1, \cdots, N$.
Set $g(t): =\max_{m\in V}r_{m}(t)$. Then $g(t)$ is a locally Lipschitz function and for a.e. $t\in [0, +\infty)$, there exists a special vertex $i\in V$ depending on $t$, such that $g(t)=r_{i}(t)$. Then by Proposition \ref{prescribed-inv} we have
\begin{equation}\label{equ7}
g(t)=r_{i}(t)=\frac{\prod_{i=1}^{N}r_{i}(t)}{\prod_{j\neq i}^{N}r_{j}(t)}=\frac{\prod_{i=1}^{N}r_{i}(0)}{\prod_{j\neq i}^{N}r_{j}(t)}\leq\frac{\prod_{i=1}^{N}r_{i}(0)}{C_{1}^{N-1}}=C_{2},
\end{equation}
which proves Claim 2.\par

Finally we show the nonconvergence part. Similar to the proof of Theorem \ref{thm-converge-E}, we can show that if $\bar{K_i}<2\pi-\mathcal{L}(\mathcal{D},\Phi)_i$ for all vertices, then there is not any ideal circle pattern $\bar{r}$ with curvature $K(\bar{r})=\bar{K}$. Moreover, the solution $r(t)$ to the prescribed flow (\ref{eq-prescribe-E}) converges exponentially fast to $0$ when $t\rightarrow +\infty$. We omit the details and complete the proof of Theorem \ref{prescribed-E}.
\end{proof}

\noindent Chang Li, chang\_li@ruc.edu.cn\\[2pt]
\emph{School of Mathematics, Renmin University of China, Beijing 100872, P. R. China}\\[2pt]

\noindent Aijin Lin, linaijin@nudt.edu.cn\\[2pt]
\emph{College of Science, National University of Defense Technology, Changsha 410073, P. R. China.}\\[2pt]

\noindent Liangming Shen, lmshen@buaa.edu.cn\\[2pt]
\emph{School of Mathematical Sciences, Beihang University, and Key Laboratory of Mathematics Informatics Behavioral Semantics,
Ministry of Education, Beijing 100191, P. R. China}

\end{document}